\documentclass[11pt, a4paper]{amsart}

\usepackage{amssymb, amsmath, comment}
\usepackage[inline]{enumitem}
\usepackage{hyphenat}
\usepackage[colorlinks,linkcolor=blue,citecolor=blue,urlcolor=black]{hyperref}
\usepackage{xcolor}
\usepackage{mathtools}
\usepackage{tikz-cd}
\usepackage{dsfont}
\usepackage[labelformat=empty]{caption}

\title{An integral Hyodo--Kato isomorphism}
\author{Federico Binda}
\address{Dipartimento di Matematica ``Federigo Enriques'',  Universit\`a degli Studi di Milano, Via Cesare Saldini 50, 20133 Milano, Italy}
\email[F. Binda]{federico.binda@unimi.it}

\author{Shuji Saito}
\address{Graduate School of Mathematical Sciences, the University of Tokyo, 3-8-1 Komaba Meguro-ku
Tokyo 153-8914, Japan}
\email[S. Saito]{sshuji.goo@gmail.com}
\usepackage[left=3cm, right=3cm, top = 2.8cm, bottom = 2.8cm, marginparwidth=2.4cm ]{geometry}

\makeatletter
\def\@tocline#1#2#3#4#5#6#7{\relax
	\ifnum #1>\c@tocdepth 
	\else
	\par \addpenalty\@secpenalty\addvspace{#2}%
	\begingroup \hyphenpenalty\@M
	\@ifempty{#4}{%
		\@tempdima\csname r@tocindent\number#1\endcsname\relax
	}{%
		\@tempdima#4\relax
	}%
	\parindent\z@ \leftskip#3\relax \advance\leftskip\@tempdima\relax
	\rightskip\@pnumwidth plus4em \parfillskip-\@pnumwidth
	#5\leavevmode\hskip-\@tempdima
	\ifcase #1
	\or\or \hskip 2em \or \hskip 2em \else \hskip 3em \fi%
	#6\nobreak\relax
	\dotfill\hbox to\@pnumwidth{\@tocpagenum{#7}}\par
	\nobreak
	\endgroup
	\fi}
\makeatother

\usepackage[all]{xy}
\usepackage{color}

\usepackage[bbgreekl]{mathbbol} 
\usepackage{amsfonts} 
\usepackage{amssymb, amsmath}
\DeclareSymbolFontAlphabet{\mathbb}{AMSb} 
\DeclareSymbolFontAlphabet{\mathbbl}{bbold} 
\newcommand{\Prism}{{\mathbbl{\Delta}}}

\numberwithin{equation}{section}
\newtheorem{thm}[subsection]{Theorem}

\newtheorem{lem}[subsection]{Lemma}
\newtheorem{cor}[subsection]{Corollary}
\newtheorem{prop}[subsection]{Proposition}
\theoremstyle{definition}
\newtheorem{defn}[subsection]{Definition}
\newtheorem{ex}[subsection]{Example}

\theoremstyle{definition}

\newtheorem{question}[subsection]{Question}

\newtheorem{const}[subsection]{Construction}

\newtheorem{rmk}[subsection]{Remark}

\DeclareMathOperator*{\colim}{colim}

\newcommand{\eq}[2]{\begin{equation}\label{#1}#2 \end{equation}}

\newcommand{\red}{{\rm red}}

\newcommand{\Hom}{{\rm Hom}}
\newcommand{\im}{{\rm im}}
\newcommand{\Spec}{{\rm Spec \,}}

\newcommand{\sR}{{\mathcal R}}
\newcommand{\sB}{{\mathcal B}}
\newcommand{\sC}{{\mathcal C}}
\newcommand{\sD}{{\mathcal D}}

\newcommand{\sF}{{\mathcal F}}

\newcommand{\sN}{{\mathcal N}}
\newcommand{\sO}{{\mathcal O}}
\newcommand{\sP}{{\mathcal P}}
\newcommand{\sQ}{{\mathcal Q}}
\newcommand{\sA}{{\mathcal A}}
\newcommand{\sS}{{\mathcal S}}
\newcommand{\sT}{{\mathcal T}}

\newcommand{\sX}{{\mathcal X}}
\newcommand{\sY}{{\mathcal Y}}

\newcommand{\F}{{\mathbb F}}
\newcommand{\G}{{\mathbb G}}

\newcommand{\N}{{\mathbb N}}
\renewcommand{\P}{{\mathbb P}}
\newcommand{\Q}{{\mathbb Q}}

\newcommand{\Z}{{\mathbb Z}}

\def\Sp{\Spec}

\newcommand{\Spf}{\operatorname{Spf}}

\newcommand{\PSh}{\operatorname{PSh}}

\newcommand{\CAlg}{\operatorname{CAlg}}

\newcommand{\Fun}{\operatorname{Fun}}

 \def\str{\mathrm{str}}
\usepackage{tikz-cd}

\newcommand{\PreLog}{\mathrm{PreLog}}

\newcommand{\Ani}{\mathrm{Ani}}
\newcommand{\WSat}{\operatorname{WSat}}
\newcommand{\Sat}{\operatorname{Sat}}

\def\Ker{\mathrm{Ker}}

\def\ch{\mathrm{ch}}
\def\rmapo#1{\overset{#1}{\longrightarrow}}

\def\qaq{\text{ and }}

\def\DA{{\sf{DA}}}
\def\DAlog{{\sf{DA}}^{\log}}
\def\DAlogS{{\sf{DA}}^{\log}_{\sR}}
\def\DAlogSs{{\sf{DA}}^{\log,s}_{\sR}}

\def\fX{\mathfrak{X}}

\def\uomega{\Omega}

\def\uomegaSs#1{\uomega^*_{#1/\sR}}
\def\uomegaSshat#1{\widehat{\uomega}^*_{#1/\sR}}
\def\omegaS#1{\omega_{#1/\sR}}
\def\omegaSs#1{\omega^*_{#1/\sR}}
\def\omegaSshat#1{\widehat{\omega}^*_{#1/\sR}}

\def\fb{\overline{f}}

\def\alphab{\overline{\alpha}}

\def\id{\mathrm{id}}

\def\qfor{\;\text{ for }}

\def\tri{\mathrm{tri}}
\def\hal{\widetilde{\alpha}}

\def\uhS{\widetilde{\sR}}
\def\utS{\sR^{\triv}}

\def\klog{k^0}

\def\Wp{W^\times}

\def\triv{\mathrm{triv}}
\def\uhA{\widetilde{\sA}}
\def\utA{\sA^{\triv}}
\def\uhB{\widetilde{\sB}}

\def\utBs{\sB^{\triv,*}}

\def\hf{\widetilde{f}}
\def\tf{f^{\tri}}
\def\uf{\underline{f}}
\def\uhf{\widetilde{\uf}}
\def\utf{\uf^{\tri}}

\def\redd#1{(#1)_{\red}}

\def\AFt#1{A^{(#1)}}
\def\sAFt#1{\sA^{(#1)}}

\def\dR{\mathrm{dR}}
\def\lSmCarpr{\mathrm{lSm}^{\mathrm{Car,pr}}}

\def\Rperf{R_{\mathrm{perf}}}

\def\lSmCar{\mathrm{lSm}^{\mathrm{Car}}}

\def\sRp{\sR^p}

\def\Ainf{A_{\mathrm{inf}}}

\def\Ftt#1#2{{#1}^{(#2)}}

\def\FttOK#1#2{{#1}_{\OK^\pi}^{(#2)}}

\def\OC{\sO_C}
\def\OK{\sO_K}

\def\OKpi{\sO^\pi_K}

\def\uW{W}

\def\uk{k}

\def\fY{\mathfrak{Y}}
\def\tsX{\fX}
\def\tsXp{\fX_p}
\def\tsY{\fY}

\def\XRm{\sX^{(m)}_{\sR}}

\def\fS{\mathfrak{S}}

\def\bL{\mathbb{L}}
\def\fSFt{\fS^{(1)}}
\def\sDFt{\sD^{(1)}}
\def\DFt{D^{(1)}}
\def\Wu{W[[u]]}
\def\xiFt{\xi^{(1)}}

\def\CAlg{\mathrm{CAlg}}

\begin{document}

\begin{abstract}
   Let $\mathcal{O}_K$ be a mixed characteristic complete DVR with perfect residue field $k$, and let $\mathfrak{X}$ be a proper formal scheme over $\mathcal{O}_K$ with semistable reduction. Answering  a question of Fontaine and Jannsen, a classical theorem of Hyodo and Kato gives a rational identification between the log crystalline cohomology of the special fiber $\mathfrak{X}_0$ over $W(k)^0$ with the de Rham cohomology of the generic fiber $\mathfrak{X}_K$. In this note, we use a log variant of the saturated de Rham--Witt complex of Bhatt--Lurie--Mathew to prove that such a comparison holds integrally.
\end{abstract}
\maketitle
\setcounter{tocdepth}{1}
\tableofcontents 
\section{Introduction}
\subsection*{Hyodo--Kato cohomology and a conjecture of Fontaine and Jannsen}Let $K$ be a complete discrete valuation field of mixed characteristic $(0,p)$, valuation ring $\sO_K$ and residue field $k$. Let $F$ be the fraction field of the ring of Witt vectors $W(k)$. A conjecture of   Fontaine and Jannsen, realized by Hyodo and Kato in \cite{HK}, predicted the existence of a weight and a monodromy filtration on the de Rham cohomology $H^*_{dR}(Y/K)$ of any smooth and proper $K$-variety $Y$. More precisely, they conjectured the existence of a graded $F$-vector space $D^*$, equipped with the structure of $(\varphi, N)$-module over $F$ and satisying $D^*\otimes_F K \simeq H^*_{dR}(Y/K)$, thus transporting to $H^*_{dR}(Y/K)$ the required structures, after a scalar extension.

One model of $D^*$ is given by the so-called Hyodo--Kato  cohomology of the special fiber of any semistable formal model $\mathfrak{X}/\mathrm{Spf}(\sO_K)$ of $Y$ (which does always exist, up to alterations,
at least after replacing $K$ with a finite extension). Let $X$ be the special fiber of $\mathfrak{X}$.  We can  endow it with the structure of a log smooth log scheme $\mathcal{X} =(X, \mathcal{M}_{\mathcal{X}})$ over the log point $k^0 =(k,\N\ni1 \to 0)$. As such, \cite{HK} provides the construction of  complexes  $R\Gamma_{ crys}(\mathcal{X}/W(k)^0)$ of $W(k)$-modules, defined in terms of the log crystalline site relative to  $W(k)^0 =(W(k), \N\ni1\mapsto 0)$. 

A Theorem of Hyodo and Kato
\cite[Theorem 5.1]{HK} gives then a quasi-isomorphism, depending on the choice of a uniformizer $\pi$ of $\sO_K$,
\begin{equation}\label{eq:HK-iso} \rho_\pi\colon R\Gamma_{ crys}(\mathcal{X}/W(k)^0) \otimes_{{W(k)}} K \simeq R\Gamma_{dR}(Y/K), \end{equation}
which is now referred to as the Hyodo--Kato isomorphism. The cohomology groups of $R\Gamma_{ crys}(\mathcal{X}/W(k)^0)$ come equipped with a monodromy operator $N$ (see \cite[\S 3.4--3.6]{HK}) and a Frobenius operator $\varphi$ (see \cite[\S 3.2]{HK}), which becomes invertible after rationalization,  satisfying $p\varphi N = N\varphi$. 
Note that in fact \cite{HK}  constructed the isomorphism only at the level of cohomology groups. 
We refer to \cite[Section 4]{ertl-yamada} for a refined statement using rigid cohomology and a complete proof (see \cite[Section 6]{ertl-yamada} for the crystalline version).  

The isomorphism \eqref{eq:HK-iso} is quite indirect. It arises from a zig-zag
\[\begin{tikzcd}
R\Gamma_{ crys}(\mathcal X/W(k)^0) & R\Gamma_{ crys}(\mathcal X/\mathcal{S})\arrow[l, "j_0^*"']\arrow[r, "j_\pi^*"] & R\Gamma_{ crys}(\mathcal X/ {\sO_K^\pi})
\end{tikzcd}
\]
where  $\OK$ is equipped with the PD ideal $(p)$, and $\OK^\pi$ is the log ring $(\OK,\N\to \OK\;;\; 1\to \pi)$ and $\sS$ is the $p$-adic completion of the PD envelope over $\Z_p$ of $W(k)[T]$ with respect to $(p,\Ker(\rho))$, where $\rho:W[T] \to \OK$ is the $W$-algebra homomorphism mapping $T$ to $\pi$. We equip $\sS$   with the log structure $\N \to \sS;\;1\mapsto T$.
The maps $j_0^*$ and $j_\pi^*$ are defined by the exact closed immersions $\Sp(W(k)^0)\to \mathrm{Spf}(\sS)$ and $\Sp(\sO_K^\pi)\to \mathrm{Spf}(\sS)$
given by setting $T\mapsto0$ and $T\mapsto \pi$ respectively. 
 A difficult step in the construction of \eqref{eq:HK-iso} is \cite[(4.13)]{HK}, which shows that 
up to inverting $p$, the map $j_0^*$ admits a section $s_\pi$, compatible with the Frobenius. 
 The composition of this section with $j^*_\pi$ turns out to be a quasi-isomorphism. Composing it further with a natural comparison map between $R\Gamma_{ crys}(\mathcal X/ \sO_K^\pi)[1/p]$  and $R\Gamma_{dR}(Y/K)$ (this is Kato's analogue of the Berthelot--Ogus isomorphism) gives the desired map \eqref{eq:HK-iso}. See \cite[Proposition-Definition 6.8]{ertl-yamada} for details.\medskip 

As remarked above, one of the main reasons to be interested in the cohomology groups of $R\Gamma_{ crys}(\mathcal X/W(k)^0)$ is that they come equipped with a natural monodromy operator $N$ (this is not the case for $R\Gamma_{ crys}(\mathcal X/ \sO_K^\pi)$), and this is exactly the ``hidden structure'' predicted by Fontaine and Jannsen. Note that if one works with rational coefficients, it is possible to completely bypass any reference to log-crystalline cohomology in the definition of the $(\varphi,N)$-structure on $R\Gamma_{dR}(Y/K)$. In fact, only classical, non logarithmic, rigid cohomology is required as input, since the monodromy operator can be completely reconstructed using the properties of rigid analytic motives. This is obtained in \cite[Theorem 1.6]{BGV}. The idea of working directly on the rigid-analytic generic fiber was considered by Beilinson \cite{BeilinsonPeriod}, and Colmez--Nizio\l~   (see for example \cite{CN}).  

\subsection*{The main result} Motivated by the interest in \emph{integral} $p$-adic Hodge theory, after \cite{BMS1} and \cite{BMS2}, our goal in this work was to understand to which extent it was possible to produce an integral refinement of \eqref{eq:HK-iso}. This required a different approach, since the classical argument of Hyodo and Kato is rational in nature, as it relies on inverting $p$ in order to apply an appropriate version of Dwork's trick. Similarly, the motivic version of \cite{BGV} (after the work of Ayoub--Gallauer--Vezzani \cite{AGV}) crucially uses rational coefficients in the identification of the relevant motivic categories, and $\mathbb{A}^1$-invariance on the special fiber together with étale descent immediately forces $p$ to be invertible in the coefficients. 

It turns out that, up to replacing log crystalline cohomology with its ``saturated derived version'', it is possible to prove an \emph{integral} comparison between log
crystalline cohomology taken relative to two different lifts of  the same
log structure modulo $p$. More precisely, write
\begin{equation}\label{W0k0OKpi}
\uW^0=(W={W(k)},\N\ni1\mapsto0),\qquad
\uk^0=(k,\N\ni1\mapsto0),\qquad
\OKpi=(\sO_K,\N\ni1\mapsto\pi),
\end{equation}
so that $\uk^0=\uW^0/p=\OKpi/(\pi)$, and let $e=v_K(p)$ be the absolute
ramification index. For $m\geq0$ with $e\leq p^m$, the element $\pi^{p^m}$ lies in
$p\sO_K$, so the $m$-fold logarithmic Frobenius of $\OKpi/p$ factors through
$\uk^0$; we write
\[
F_m\colon\uk^0\longrightarrow\OKpi/p,
\qquad
\FttOK\sX m:=\sX\times_{\uk^0,F_m}\OKpi/p
\]
for the resulting $m$-fold Frobenius twist of a log scheme $\sX$ over $\uk^0$.
Our main result is the following.
 
\begin{thm}[Theorem \ref{thm.integralHK} and Corollary \ref{cor.integralHK}]\label{thm:intro-integralHK}
Assume $e\leq p^m$. For every fine, quasi-compact and quasi-separated log-smooth
morphism $\sX\to\Spec(\uk^0)$ of Cartier type, there is a natural equivalence in $\CAlg(\mathcal{D}(\mathcal{O}_K)^\wedge_p)$
\[
\rho_{\pi,m}^{\rm int}\colon R\Gamma_{\rm crys}(\sX/\uW^0)
\ \widehat\otimes^{\mathbb L}_{W,\phi^m}\ \sO_K
\;\simeq\;
R\Gamma_{\rm crys}\bigl(\FttOK\sX m/\OKpi\bigr),
\]
depending on $\pi$, functorial for morphisms of such log schemes over $\uk^0$. Here,
$\widehat\otimes^{\mathbb L}_{W,\phi^m}\sO_K$ denotes derived $p$-completed base
change along $W\xrightarrow{\phi^m}W\to\sO_K$.

Assume that ${\mathfrak{X}}$ is a fine, quasi-compact and quasi-separated log smooth scheme of Cartier type over $\OK^\pi$ with log special fiber $\mathcal{X} = {\mathfrak{X}}\times_{\OK^\pi} k^0\to k^0$. Let $L\eta_p$ be the décalage functor, and write $L\eta_p^m$ for its $m$-th fold power.  Setting  
\begin{equation}\label{twistedDR}
R\Gamma_{\dR}^{\eta^m}({\fX}/\OKpi)
	:=
	R\Gamma\!\left(
	\sX_{\acute et},
L\eta_p^m\widehat\omega^*_{{\fX}/\OKpi}
	\right),\end{equation}
	we get, composing with Kato's crystalline--de Rham comparison  \cite[(6.4)]{katolog},  a
		natural equivalence
		\[
		R\Gamma_{\rm crys}(\sX/W^0)
		\widehat\otimes^{\mathbb{L}}_{W,\phi^m}\OK
		\simeq
		R\Gamma_{\dR}^{\eta^m}(\fX/\OKpi),
		\]
		functorial for morphisms over $\OKpi$.
\end{thm}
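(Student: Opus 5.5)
Via a log version of the saturated de Rham--Witt complex of Bhatt--Lurie--Mathew, the plan is to show that both sides of $\rho_{\pi,m}^{\rm int}$ depend only on reductions modulo $p$ that are related by the $m$-fold Frobenius.

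\textbf{Step 1: a log saturated de Rham--Witt complex.} For a log ring (or quasi-coherent log scheme) $\sX$ over $\F_p$, define $W\Omega^{*}_{\sX}$ as the initial saturated log Dieudonné complex receiving the log de Rham complex. Then prove a log Illusie/Hyodo--Kato comparison: for $\sX$ log smooth of Cartier type over a base $S_0=S/p$, with $S$ a $p$-torsion free log $\delta$-ring lift ($S=\uW^0$ or $S=\OKpi$),
\[
R\Gamma_{\rm crys}(\sX/S)\simeq R\Gamma\bigl(\sX_{\acute et}, W\Omega^*_{\sX/S}\bigr),
\]
where $W\Omega^*_{\sX/S}$ is the saturated log de Rham--Witt complex relative to the strict Frobenius lift on $S$. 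I would do this étale locally, using charts $\sX\to \Spec(S_0[P])$ of Cartier type. Lift them to $S[P]$ and compare with $L\eta_p$ applied to the completed log de Rham complex of the lift; the Cartier isomorphism of Kato, which holds by the Cartier type hypothesis, plays the role of the BLM criterion. This gives the log analogue of $W\Omega\simeq R\lim L\eta_p^n$, and hence also identifies $L\eta_p^m\widehat\omega^*$ with a truncation of the crystalline complex.

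\textbf{Step 2: reduction to a statement modulo $p$.} By construction, the right side depends only on $\FttOK\sX m$ over $\OKpi/p$ together with the lift $\OKpi$. The key point is that $\phi^m$-twisting on the crystalline side corresponds to Frobenius twist. Concretely, the $m$-fold Frobenius on a saturated Dieudonné complex factors through $L\eta_p^m$. So $R\Gamma_{\rm crys}(\sX/\uW^0)\widehat\otimes_{W,\phi^m}\sO_K$ can be computed from $\sX\times_{\uk^0,F_m}(\OKpi/p)$ with lift $\OKpi$, because the map $F_m\colon \uk^0\to\OKpi/p$ lifts to $W\xrightarrow{\phi^m}W\to\sO_K$ compatibly with log structures. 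This lift is the only place the choice of $\pi$ enters: $1\mapsto\pi$ must map to $0$ mod $p$ after $p^m$-th power, which uses $e\le p^m$. The resulting map is natural because it is built from functoriality of crystalline cohomology along the morphism of PD log thickenings $(\uW^0\to\uk^0)\to(\OKpi\to\OKpi/p)$ over $F_m$, using base change for log crystalline cohomology of Cartier type (proper base change is not needed since only qcqs is assumed; étale locally this is flat base change of the de Rham complex of a lift). To get the equivalence as objects of $\CAlg$, construct the map at the level of commutative algebras via the crystalline base change map.

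\textbf{Step 3: the equivalence and the de Rham reformulation.} The base change map $R\Gamma_{\rm crys}(\sX/\uW^0)\widehat\otimes_{W,\phi^m}\sO_K\to R\Gamma_{\rm crys}(\FttOK\sX m/\OKpi)$ is checked to be an equivalence étale locally on a Cartier type chart, where both sides are explicit log de Rham complexes of $p$-completed polynomial-monoid algebras. Here the main obstacle arises: $\OKpi\to\OKpi/p$ is not a PD-thickening with nilpotent divided powers when $e>p-1$. Crystalline cohomology over $\OKpi$ is therefore not a priori computed by a de Rham complex of a naive lift. I would overcome this with Step 1, replacing crystalline cohomology by the saturated de Rham--Witt complex, which is insensitive to the PD issue. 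The factor $L\eta_p^m$ absorbs exactly the denominators, since $\pi^{p^m}\in p\sO_K$. Finally, for $\fX$ over $\OKpi$, Kato's comparison \cite[(6.4)]{katolog} together with the identification $\FttOK\sX m\simeq$ reduction of the Frobenius-twisted model gives $R\Gamma_{\rm crys}(\FttOK\sX m/\OKpi)\simeq R\Gamma(\sX_{\acute et},L\eta_p^m\widehat\omega^*_{\fX/\OKpi})$. Composing yields the second equivalence, and functoriality over $\OKpi$ is inherited from each step.
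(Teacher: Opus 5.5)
There is a genuine gap in Steps 2--3: the comparison map you want to build does not exist. You claim that $F_m\colon\uk^0\to\OKpi/p$ lifts to $W\xrightarrow{\phi^m}W\to\sO_K$ compatibly with log structures. You then take $\rho^{\rm int}_{\pi,m}$ to be the crystalline base change map along a morphism of log PD-thickenings $(\uW^0\to\uk^0)\to(\OKpi\to\OKpi/p)$, and check locally that it is an equivalence. But any morphism of (pre-)log rings $\uW^0\to\OKpi$ must send $\alpha_{\uW^0}(1)=0$ to the image of a section of the log structure of $\OKpi$. All such images are nonzero elements $v\pi^n$ ($v$ a unit) of the domain $\sO_K$; on the chart this is the impossible requirement $0\mapsto\pi^{p^m}$. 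The congruence $\pi^{p^m}\equiv 0$ holds only modulo $p$. Integrally, the hollow structure and $1\mapsto\pi^{p^m}$ are incompatible, and this is exactly why the statement is not a formal base change result.

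Step 1, as you state it (relative to a Frobenius lift on $S$), is also unavailable for $S=\OKpi$, because a ramified $\sO_K$ has no Frobenius lift. Indeed, from $\pi^e=p\epsilon$ one gets $v_K(\phi(\pi))=1$, while $\phi(\pi)\equiv\pi^p \bmod p$ forces $v_K(\phi(\pi))\geq 2$ when $e\geq 2$. Even with a Frobenius lift, recovering crystalline cohomology requires it to be an automorphism; otherwise you only see the base change to $R_{\rm perf}$.

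Your diagnosis of the obstacle is also off. $p\sO_K$ always carries its canonical divided powers, and Kato's theorem does compute the crystalline cohomology of $\fX_p:=\fX\times_{\OKpi}\OKpi/p$ over $\OKpi$ by $\widehat\omega^*_{\fX/\OKpi}$. Moreover, $L\eta_p^m$ does not absorb denominators of $\pi^{p^m}/p$; it comes from the $m$-fold Frobenius twist.

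The missing idea is an equivalence between crystalline cohomologies over two base log structures that agree only modulo $p$, one not induced by any map of bases. In the paper this comes from a rigidity property of the saturated log de Rham--Witt complex over bases satisfying \eqref{eq;SN}, which allow non-Teichm\"uller parameters in $pR$. Maps from $\widehat\uomega^*_{\sP/\sR}$ to strict log Dieudonn\'e algebras are detected on the reduced special fibre (Proposition~\ref{prop;omegaUniversal}). Hence $\WSat(\widehat\uomega^*_{\sP/\sR})$ depends only on $\sP_0$, and base log structures congruent modulo $\sqrt{pR}$ give canonically isomorphic functors (Corollary~\ref{main_cor_WOmega_agrees}).

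Since a Frobenius automorphism is needed to identify these functors with crystalline cohomology, the paper proceeds as follows:
\begin{enumerate}
\item Work over $\Ainf(\OC)$ with the two structures $1\mapsto 0$ and $1\mapsto pa$, where $\theta(a)=\pi^{p^m}/p$.
\item Specialize along $\theta$ and $h_m\colon\OC^{\pi^{p^m}}\to\OC^\pi$.
\item Show independence of the choice of $a$ via the cocycle and base-change properties.
\item Descend to $\sO_K$ along the \v{C}ech nerve of $\sO_K\to\OC$, using $A_n=W(B_n^\flat)$.
\end{enumerate}

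Finally, for the de Rham form there is no ``Frobenius-twisted model'' of $\fX$ over $\sO_K$. The paper instead iterates Hyodo--Kato's divided relative Frobenius, using the Cartier type hypothesis. This identifies the crystalline complex of $\FttOK\sX m$ with $L\eta_p^m$ of that of $\fX_p$, and Kato's comparison then finishes the argument.
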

We remark that the twist by $L\eta_p^m$ is necessary in the ramified case, see Example \ref{ex:Tate_ramified}.
In the special case where $\OK=W$ and $\sX$ is proper over $\OK$,
Theorem \ref{thm:intro-integralHK} implies an equality (where $W^\times=(W,\N\ni 1 \to p)$)
\[
{\rm length}_W (H^i_{\rm crys}(\sX/W^0)_{\rm tors}/p^n) = {\rm length}_W (H^i_{\rm dR}({\fX}/W^\times)_{\rm tors}/p^n)\;\text{ for all } \; i,n,
\]
which is a simultaneously a special case ($e=1$) and a generalization ($\widetilde{\sX}$ does not have good reduction) of Conjecture $\alpha$ in \cite{AbhinandanYoucis}. Note that the example \ref{ex:enriques} provides a useful sanity check, exhibiting matching $2$-torsion classes in log crystalline cohomology of $\sX$ over $W^0$ and in log de Rham cohomology of ${\fX}$ over $W^\times$.  
\medbreak

Our method of the proof of Theorem \ref{thm:intro-integralHK}
makes use of the \emph{saturated de Rham-Witt complex} of Bhatt-Lurie-Mathew \cite{BLM} in its log variant established by Yao \cite{Ya}.  We need in fact a modification of Yao's theory, carried
out in \S\ref{Special pre-log algebras}, since \cite{Ya} works over a base whose log
structure is the Teichm\"uller lift of its reduction modulo $p$, and this
excludes the structure $W^\times$ ($1\mapsto p$) which is one of the two we must
compare. 
The key point is then the following: let $\mathcal{R}$ be a mixed characteristic log base, and consider  a $p$-torsion-free special
$F$-pre-log algebra $\sA$ over $\sR$ (for example, a polynomial algebra with pre-log structure given by a free monoid on a subset of the variables, and with its standard Frobenius lift, see Definition \ref{def;Flogalgebra}). Then,  maps out of the completed log de Rham
complex $\widehat\omega^*_{\sA/\sR}$ into strict log Dieudonn\'e algebras are
detected entirely on the reduction of $\sA$ modulo $p$
(Proposition~\ref{prop;omegaUniversal}). As a consequence, the functor
\[
\sP\longmapsto\WSat\bigl(\widehat\omega^*_{\sP/\sR}\bigr)
\]
factors through the reduction, giving a functor 
$\mathcal W\uomega^*_{-/\sR}\colon\mathrm{lPoly}_{\sR/p}\to\DA_R^{\str}$
(Theorem~\ref{thm;mathcaWomega}), and given two base log structures agreeing modulo
$p$, we can construct an invertible natural transformation relating them
(Corollary~\ref{main_cor_WOmega_agrees}).  As finitely generated log polynomial algebras are the compact projective objects in the category of pre-log rings, we can then animate the resulting functor to get derived log crystalline cohomology, and the required integral comparison of log crystalline cohomology over a base with two different log structures (we can refer to this as an \emph{integral transfer structure}).

To build such transformation, we use the fact that $\DA_R^{\str}$ is in fact (equivalent to) an ordinary $1$-category (as observed by \cite{BLM}), so that higher coherences are in fact automatic.

We can see a sample application. Consider a proper strictly semistable family $\mathfrak{X}$  over $W(k)$: as before, we can see it as  a log smooth vertical log scheme over $W^\times$ (the adjective vertical here refers to the fact that the log structure becomes trivial on the generic fiber). Let  $\mathcal X$ denote its special fiber over $k^0$.
 By Kato's comparison, 
 we can {identify} the relative log de Rham cohomology $R\Gamma_{dR}({\mathfrak{X}}/W^\times)$ with the log crystalline cohomology $R\Gamma_{ crys}(\mathcal X/W^\times)$. Using Theorem \ref{thm:intro-integralHK}, it is then possible to use the weight spectral sequence of Mokrane-Nakkajima \cite{nakkajima} for $R\Gamma_{crys}(\mathcal X/W^0)$ to compute it integrally.
 As a consequence, it is possible to build non-trivial torsion classes in $R\Gamma_{dR}({\mathcal{X}}/W^\times)$ by identifying classes in log crystalline cohomology relative to $W^0$, and the spectral sequence allows us to do so starting from torsion classes in the crystalline cohomology of the smooth components of $X$. We explain this procedure by means of two explicit examples, see Example \ref{ex:enriques} and Example \ref{ex:Tate_ramified}. 

\subsection*{Questions and related works}
 Consider for simplicity a proper semistable formal scheme $\mathfrak{X}$ over 
 $W=W(k)$ with $k$ algebraically closed. 
 Let $\OC$ be the ring of integers in the completion of an algebraic closure of $W[1/p]$ and $A_{\inf}=A_{\inf}(\OC)=W(\OC^\flat)$.
The  $A_{\inf}$-cohomology of \cite{BMS1}, studied in the semistable case by \cite{Cesnavicius-Koshikawa}, 
gives a complex of perfect $A_{\inf}$-modules \[
C_{A_{\inf}}(\mathfrak X)
=
R\Gamma_{A_{\inf}}(\mathfrak X_{\mathcal O_C}).
\]
There are two specialization maps called the crystalline and de Rham specialization:
\begin{equation}\label{specialization}
\vartheta:  (A_{\rm inf},\alpha_{\inf})  \to (W,\alpha_W^0)\:\text{ and }\;
 \theta\colon (A_{\rm inf},\alpha_{\inf})  \to (\OC,\alpha_C^\times),
 \end{equation}
where the relevant log structures are given by
\[\alpha_{\inf}: \sO_C^\flat-\{0\} \to A_{\inf}\;;\; x \to [x], \;
\alpha_W^0:\Q_{\geq0} \to W\;; \; a\to 0 \;(a>0), \;
\alpha_C^\times:\OC-\{0\}\to \OC,\]
and $\vartheta$ is the lift of the quotient map $\OC^\flat \to k$
and $\theta$ is Fontaine's map.
These give rise to complexes
\begin{equation}\label{Csp0x}
\operatorname{sp}_0(C_{A_{\inf}}(\mathfrak X))=C_{A_{\inf}}(\mathfrak X)\otimes_{A_{\inf},\vartheta} W
\quad\text{and}\quad
\operatorname{sp}_\times(C_{A_{\inf}}(\mathfrak X))=C_{A_{\inf}}(\mathfrak X)\otimes_{A_{\inf},\theta} \OC.
\end{equation}
Both specialization maps factor through the $p$-completion $A_{\rm crys}$ of the PD envelope of $A_{\rm inf}$ with respect to  $\Ker(\theta)$ so that the complexes \eqref{Csp0x} are viewed as specializations of the crystalline complexes (see \cite[Th.5.4]{Cesnavicius-Koshikawa}) 
\begin{equation}\label{Acrys-cohomology}
 R\Gamma_{{\rm crys}}({\mathfrak{X}}_{\OC/p} / A_{\rm crys})\simeq R\Gamma_{A_{\inf}}(\mathfrak X) \widehat{\otimes}^{\mathbb{L}}_{A_{\inf}} A_{\rm crys},\end{equation}
 where $A_{\rm crys}$ is equipped with the induced log structure from $A_{\inf}$.
 We may call the complexes \eqref{Csp0x} the  hollow-PD and compactifying log-PD specializations of $R\Gamma_{{\rm crys}}({\mathfrak{X}}_{\OC/p} / A_{\rm crys})$ respectively  (in some sense, the role of $A_{\rm crys}$ here is similar to that $\mathcal{S}$, the $p$-completed PD envelope over $\Z_p$ of $W[T]$ introduced in the beginning of the introduction for  the classical Hyodo--Kato theory).

The work of \cite{Cesnavicius-Koshikawa} does not provide directly any comparison between  the complexes \eqref{Csp0x} while our main result allows us to compare them:
Write 
\[C_{\rm HK} =R\Gamma_{\rm crys}(X/W^0)\;\text{ and }\; C_{\rm dR}=R\Gamma_{{   \rm dR}}(\mathfrak{X}/W^\times),\]
where $X$ is the special fiber of $\mathfrak{X}$ equipped with the standard log structure and $\uW^0$ is from \eqref{W0k0OKpi} and $W^\times=(W,\N\ni1\mapsto p)$.
 Write $\Phi^{\rm int}_{\pi,\mathfrak X}$ for the equivalence between $C_{\rm HK}$ and $C_{\rm dR}$ provided by Theorem \ref{thm:intro-integralHK}. 
Using $\Phi^{\rm int}_{\pi,\mathfrak X}$, we can define a ``transport map''
\begin{equation} \label{eq:intro_compatible_CK}
\tau^{A_{\inf}}_{0,\times}\colon
\operatorname{sp}_0(C_{A_{\inf}}(\mathfrak X))\otimes^{\mathbb L}_{W} \OC
\xrightarrow{\ \sim\ }
\operatorname{sp}_\times(C_{A_{\inf}}(\mathfrak X))
\end{equation}
  such that the following diagram commutes:
\[\begin{tikzcd}[column sep=large,row sep=large] \operatorname{sp}_0(C_{A_{\inf}}(\mathfrak X))\otimes^{\mathbb L}_{W} \OC \arrow[r,"\tau_{0,\times}^{A_{\inf}}"] \arrow[d,"\beta_0"'," \sim"] & \operatorname{sp}_\times(C_{A_{\inf}}(\mathfrak X)) \arrow[d,"\beta_\times","\sim"'] \\ C_{\rm HK}\widehat\otimes_W^{\mathbb L}\mathcal O_C \arrow[r,"\Phi_{\pi,\mathfrak X}^{\rm int}\widehat\otimes_W\mathcal O_C"'] & C_{\rm dR}\widehat\otimes_W^{\mathbb L}\mathcal O_C. \end{tikzcd}\]
where $\beta_0$ and $\beta_\times$ are the comparison isomorphisms provided by \cite[(1.2.1)]{Cesnavicius-Koshikawa}.

Note that the two log structures 
 in the targets of the maps $\vartheta$ and $\theta$ in \eqref{specialization} are quite incompatible. Indeed, for
 a $p$-power compatible sequence $x=(x^{(0)},x^{(1)},x^{(2)}.\dots)$ with $x^{(i)}$ in  the maximal ideal of $\sO_C$, 
$\vartheta$ maps the Teichm\"uller lift $[x]\in A_{\inf}$ to $0\in W$  
(this is exactly why the induced log structure on $W$ is hollow), whereas $\theta$ sends $x$ to $x^{(0)}\in {\sO_C}$, which is  possibly a nonzero element of $\OC$ of arbitrarily small
positive valuation, and in particular does not lie in $p\OC$. 
Thus, the two log rings $(W,\alpha_W^0)\otimes_{W}\OC$ and $(\sO_C,\alpha_C^\times)$ are not congruent modulo $p\OC$
and no transport in the sense of   Theorem \ref{thm:intro-integralHK} is directly available for this pair.

 After inverting $p$, we can formulate a precise question. As observed by Ertl and Yamada, the Hyodo--Kato isomorphism (in rigid cohomology) actually depends on the choice of a branch $\log_q$ of the $p$-adic logarithm. Note that such  a choice determines an embedding $\iota_q\colon B_{\rm st} \to B_{\rm dR}$. 

Following again \v{C}esnavi\v{c}ius--Koshikawa, from $A_{\rm inf}$ cohomology,
 we deduce
 a semistable comparison isomorphism (the semistable Fontaine--Jannsen conjecture, see \cite[Theorem 9.5]{Cesnavicius-Koshikawa})
\[ \alpha_{\rm st}^{\rm CK}\colon R\Gamma_{\acute et}(\mathfrak X_{\overline K},\mathbb Z_p) \otimes_{\mathbb Z_p}^{\mathbb L}B_{\rm st}  \longrightarrow C_{\rm HK}\otimes_W^{\mathbb L}B_{\rm st},\]
compatible with $\varphi, N$ and Galois action.
 On the other hand, we have de Rham comparison isomorphism (given e.g., by \cite[13.1]{BMS1}, see \cite[(6.7.2)]{Cesnavicius-Koshikawa})
\[\alpha_{\rm dR} : R\Gamma_{\acute et}(\mathfrak X_{\overline K},\mathbb Z_p) \otimes_{\mathbb Z_p}^{\mathbb L}B_{\rm dR} \longrightarrow C_{\rm dR}\otimes_W^{\mathbb L}B_{\rm dR}.\]  
 It is tempting to ask the following Question.

\begin{question} 
 Does the identification provided by \eqref{eq:intro_compatible_CK}  
recover, for the same choice of
a branch of the $p$-adic logarithm,  the compatibility between the
$B_{\rm st}$- and {$B_{\rm dR}$-} 
comparison isomorphisms? More precisely, does the diagram
\[\begin{tikzcd}[column sep=large,row sep=large] R\Gamma_{\acute et}({\fX}_{\overline K},\mathbb Z_p) \otimes_{\mathbb Z_p}^{\mathbb L}B_{\rm st} \arrow[r,"\alpha_{\rm st}^{\rm CK}","\sim"'] \arrow[d,"1\otimes\iota_q"'] & C_{\rm HK}\otimes_W^{\mathbb L}B_{\rm st} \arrow[d,"\Phi_{\pi,\mathfrak X}^{\rm int}\otimes\iota_q"] \\ 
R\Gamma_{\acute et}({\fX}_{\overline K},\mathbb Z_p) \otimes_{\mathbb Z_p}^{\mathbb L}B_{\rm dR} \arrow[r,"\alpha_{\rm dR}","\sim"'] & C_{\rm dR}\otimes_W^{\mathbb L}B_{\rm dR}. \end{tikzcd}  \]
commute?
\end{question}
Note that the analogous compatibility is discussed in \cite[Remark 9.6]{Cesnavicius-Koshikawa} if one replaces in the above square the right hand map with the classical Hyodo--Kato map. This follows from \cite[Proposition 9.2]{Cesnavicius-Koshikawa}, using Beilinson's construction of the Hyodo--Kato map \cite[Section 1]{BeilinsonPeriod}. So, the above compatibility could be obtained by a positive answer to the following Question, which we leave to a future work.
\begin{question}
    Let us write $\Psi_{\pi, \mathfrak X}^{\rm crys}$ for the crystalline Hyodo--Kato map \cite[Proposition-Definition 6.8]{ertl-yamada} (composed with the de Rham comparison, see \cite[Remark 6.9]{ertl-yamada}), and write $\Phi^{\rm rat}_{\pi, \mathfrak X}$ for the rationalization of $\Phi^{\rm int}_{\pi,\mathfrak X}$. Is it true that $\Phi^{\rm rat}_{\pi,\mathfrak X}=\Psi^{\rm crys}_{\pi, \mathfrak X}$, up to Frobenius twist? 
\end{question} 
Note that if $\varphi_{\rm lin}\colon\varphi^*C_{\rm HK}\to C_{\rm HK}$ is the linearized crystalline Frobenius, Example \ref{ex:Tate_ramified} shows that in the ramified case one would need to precompose $\Psi^{\rm crys}_{\pi, \mathfrak X}$ with $\varphi_{\rm lin}^m$ in order to get matching Frobenius structures.
\medskip

In a  different direction, the recent work \cite{AbhinandanYoucis} by Abhinandan and Youcis established an integral version of the Berthelot--Ogus comparison isomorphism between crystalline and de Rham cohomology of  perfect complexes of prismatic crystals on a smooth and proper formal scheme $\mathfrak {X}$ over a ramified base $\OK$ with the special fiber $X=\fX\otimes_{\OK} k$, where $\OK$ is as in the beginning of the introduction.
 More precisely, for $n\geq \lceil \log_p (\frac{e}{p-1}) \rceil$, they construct isomorphisms 
\[ R\Gamma_{\rm dR}^{(n)}({\fX/\OK,}\mathcal{E}) \xrightarrow{\sim} (\varphi^n)^*R\Gamma_{\rm crys}({X}/W, \mathcal{E}^{\rm crys}_k) \otimes_{W} \OK\] 
for $\mathcal{E}$ a prismatic crystal on $\mathfrak X$, where $W=W(k)$.
The left hand-side denotes the $n$-th twisted de Rham cohomology groups, defined by means of the de Rham point $\rho_{\rm dR} \colon \Spf(\OK) \to \OK^\Prism$,  composed with the  $n$-fold Frobenius $F^n_{\OK}$ of $ \OK^\Prism$, see \cite[Theorem A]{AbhinandanYoucis} (note that Lemma \ref{lem;CrysDRComparisontwisted} and  \cite[Proposition 4.7]{AbhinandanYoucis} imply $R\Gamma_{\rm dR}^{(m)}(\mathfrak X/\mathcal O_K)=R\Gamma_{\dR}^{\eta^m}(\fX/\OKpi)$ (cf. \eqref{twistedDR})).  We see their result as a good reduction analogue of Theorem \ref{thm:intro-integralHK},  although the comparison maps
are constructed by different methods and their equality is not
addressed here 
(note that in good reduction case, they get a the sharper ramification bound
$e\leq(p-1)p^n$). It is reasonable to expect that the diagram
(where we consider the case $\mathcal{E}=\sO_{\fX}$)
\[\begin{tikzcd}[column sep=large] \varphi_W^{m,*}R\Gamma_{\rm crys}(X/W) \otimes_W^{\mathbb L}\mathcal O_K \arrow[r,"\iota_{\rm AY}^{(m)}"] \arrow[d,equal] & R\Gamma_{\rm dR}^{(m)}(\mathfrak X/\mathcal O_K) \arrow[d,"\sim"'] \\ R\Gamma_{\log\text{-}\rm crys}(X/W^0) \widehat\otimes_{W,\varphi^m}^{\mathbb L}\mathcal O_K \arrow[r,"\rho_{\pi,m}^{\rm int}"'] & R\Gamma_{\log\text{-}\rm crys} (X_{\mathcal O_K^\pi}^{(m)}/\mathcal O_K^\pi), \end{tikzcd}\]
commutes for $\fX$ smooth and proper over $\OK$,
{where $\rho_{\pi,m}^{\rm int}$ is from Theorem \ref{thm:intro-integralHK},}
and the top (resp. right vertical) arrow is provided by \cite[Theorem A]{AbhinandanYoucis} (resp. \cite[Proposition 4.7]{AbhinandanYoucis}).

\medbreak

Another reasonable question is whether an analogue of Theorem \ref{thm:intro-integralHK} for coefficients holds or not. The most natural formulation of the question is in terms of logarithmic prismatic crystals, and a positive answer would give an integral refinement to the transfer Theorem of Ogus \cite{ogus_compositio}. We will address this question in a follow-up paper \cite{Revisiting_HK_part2}. 

\subsection*{Acknowledgments} This project began in 2023, during a visit of the second named author to the first named author at the University of Milano. The second author gratefully acknowledges generous financial support from Amnon Neeman during his visits, and thanks the Department of Mathematics at the University of Milano for its warm hospitality.

 We thank Mauro Porta for many interesting conversations about the subject of this paper,  Veronika Ertl for providing a very detailed feedback on a first draft, and Alberto Vezzani for a precious insight on the Frobenius operator on log rings. We also thank Tommy Lundemo, Alberto Merici, and Doosung Park for their interest in our work and for commenting on a preliminary version of this note, and Alice Garbagnati for providing references for Example \ref{ex:enriques}. The first named author was partially supported by the PRIN grant 20222B24AY ``The arithmetic of motives and $L$-functions''. The second author is supported by JSPS Grant-in-aid (B) \#20H01791 representative Shuji Saito.
\section{Log Dieudonn\'e algebras (after Yao)}

In this section, we present a slight modification of the theory of log Dieudonn\'e algebras developed by Yao. 
In this modified version, we relax the assumption made at the beginning of \cite[\S4]{Ya} to treat log Dieudonn\'e algebra over a base whose log structure may not be the Teichmuller lift of its reduction modulo $p$. Throughout this section we fix a perfect field $k$ of positive characteristic $p$. 
  
\subsection{Special pre-log algebras and de Rham complexes}\label{Special pre-log algebras}

Let $R$ be a $p$-complete and $p$-torsion free algebra over the ring $W(k)$ of Witt vectors of a perfect field $k$ with a fixed lift $F_R\colon R\to R$ of the Frobenius on $R/p$. 
We equip $R$ with a  pre-log structure $\alpha_R\colon N\to R$ satisfying the condition:
We have a decomposition $N=N_1\oplus N_2$ of monoids and 
\eq{eq;SN}{
\alpha_R(N_2-\{0\})\subset pR\text{ and } F_R(\alpha(n))=\alpha(p n)=\alpha(n)^p\text{ for } n\in N_1.}
Note that we don't require that the second formula holds for $n\in N_2$. 
Write $\sR=(R,N)$ for the associated pre-log ring.

\begin{rmk}
Write $F_N$ for the multiplication by $p$ map on the monoid $N$ (this will be the Frobenius on the monoid as in \cite{HK}). Note that, in general, the pair $(F_R,F_N)$ does not define a morphism of pre-log rings $\sR\to\sR$. This would be the case if $N_2=0$.
\end{rmk}

\begin{ex} The main examples of $p$-complete algebras $A$ equipped with  pre-log structures satisfying the assumption \eqref{eq;SN} are given respectively by $(W(k), \N \to W(k), 1 \mapsto p)$ and $(W(k), \N \to W(k), 1 \mapsto 0)$, i.e.,  the compactifying log structure on $W(k)$ induced by its closed point and the ``hollow'' log structure on the same ring. These are usually denoted by $W(k)^\times$ and $W(k)^0$ respectively. In both cases, the Frobenius is the standard Frobenius lift on the Witt vectors. 

     Other examples satysfying \eqref{eq;SN} include pre-log rings over $W(k)^\times$ or $W(k)^0$ equipped with horizontal log structures. E.g., one may take $\sR = (W(k)\langle T\rangle, \N\oplus \N)$ (the brackets denote $p$-adic completion) with pre-log structure given by $(1,0) \mapsto p$, $(0,1)\mapsto T$, and Frobenius  induced by $T\mapsto T^p$ (and the Frobenius lift on $W(k)$).  Yet another example is given by the ring $W(k)\langle T\rangle$ with pre-log structure given by $1\mapsto pT$. In this case $N_2=\N$ and $N_1=0$.
\end{ex}

\begin{rmk}
 Since the construction of the log de Rham complex is insensitive to passing to the associated log structure, we will mostly restrict our treatment to pre-log rings (instead of log rings), and work in the category of pre-log algebras. Given a pre-log ring $(A,P)$, we will denote by $(A,P)^a$ the associated log ring. See \ref{sec:global} below for a precise treatment of the globalization problem. 
\end{rmk}

\begin{defn}\label{def;Flogalgebra}
An $F$-pre-log algebra over $\sR$ is a pair $(\sA,F_A)$ consisting of a pre-log algebra 
$\sA=(A,\alpha\colon L \to A)$ over $\sR$ and a $F_R$-homomorphism $F_A\colon A\to A$ such that $F_A(a)\equiv a^p\mod pA$ (so, $F_A$ is a   lift of the absolute Frobenius of $A/p$).
 An $F$-pre-log algebra $(\sA,F_A)$ over $\sR$ is called \emph{special} if there is a decomposition $L=L^o\oplus N$ as monoids such that the structure map $N\to L$ is the inclusion into the second factor and that 
\eq{eq;def;Flogalgebra}{F_A(\alpha(l))=\alpha(l)^p\qfor l\in L^o.}
 A map $(\sA,F_A) \to (\sB,F_B)$ of $F$-pre-log algebras is a map $\sA\to \sB$ of pre-log rings which is compatible with $F_A$ and $F_B$. 
\end{defn}
 
\begin{rmk}\label{rmk;Flogalgebra-Frobenius}
Let $(\sA,F_A)$ be a special $F$-pre-log algebra over $\sR$.
While $F_A$ may not extend to a map of pre-log rings $\sA\to \sA$, a relative Frobenius $F_{\sA/\sR}$ is defined as follows:
Letting $\AFt 1=A\otimes_{R,F_R} R$, $F_A$ factors as 
$A \rmapo{-\otimes 1_R} \AFt {1} \rmapo{F_{A/R}} A$, 
where the former is the base change map and $F_{A/R}$ is an $R$-algebra map. By the assumption, we have a decomposition $L=L^o\oplus N$ and let
$\alpha^{(1)}:L\to \AFt 1$ be the map which on $L^o$ is the composite of $\alpha$ and the base change map $A\to \AFt 1$ and on $N$ is the composite of $\alpha_R: N\to R$ and the structure map of the $R$-algebra $\AFt 1$.
Then, the pair of $\sAFt 1=(\AFt 1,\alpha^{(1)}:L \to \AFt 1)$ and 
$F_{\AFt 1}=F_A\otimes F_R$ gives an $F$-pre-log algebra over $\sR$ and 
\[F_{\sA/\sR}=(F_{A/R},p/N) :\sAFt {1}\to \sA\]
is a map of $F$-pre-log algebras over $\sR$, where $p/N: L \to L$ is the map of monoids given by the multiplication $p$ on $L^o$ and the identity on $N$. 
Moreover, $F_{\sA/\sR}$ modulo $p$ is identified with 
the relative Frobenius map of pre-log rings (see \cite[(2.19]{HK})
\[\sA/p \otimes_{\sR/p,F_{\sR/p}} \sR/p \to \sA/p,\]
where $F_{\sR/p}$ is given by the Frobenius on $R/p$ and the multiplication by $ p$ on $N$.
\end{rmk}

\begin{rmk}\label{rmk;Flogalgebra}
\begin{itemize}
\item[(1)]
An example of a special $F$-pre-log algebra $(\sA,F_A)$ over $\sR$ is a free object, that is, an algebra of the form
\[ A = R[X_1, \ldots, X_n, Y_1, \ldots, Y_m], \quad \alpha_R = (\alpha_Y , \alpha_R) \colon \mathbb{N}^m\oplus N \to A
\]
where $\alpha_Y(e_i) = Y_i$ for $i=1, \ldots, m$, with the obvious choice of Frobenius lift mapping the variables to its $p$-th powers. 
\item[(2)]
On the other hand, note that in case $\sR=(W(k),\N^\times)$, the log-smooth algebra $(W(k)[x,y]/(xy-p),\N^2)$ over $\sR$ cannot be extended to a special $F$-pre-log algebra $(\sA,F_A)$ over $\sR$. 

\end{itemize}
\end{rmk}
\begin{rmk}\label{rmk:log_poly}
    We let $\mathrm{lPoly}_{\sR}$ be the full subcategory of $\mathrm{PreLog}_{\sR}$ of pre-log $\sR$-algebras spanned by free objects. It is easy to see that the subcategory $\mathrm{PreLog}_{\sR}^{\rm sfp}$ of strongly finitely presented objects in $\mathrm{PreLog}_{\sR}$ consists of retracts of free objects. In particular, we can consider   category of animated pre-log rings as   $\mathrm{Ani}(\mathrm{PreLog}_{\sR}) \simeq \Fun^\Pi(\mathrm{lPoly}_{\sR}^{\rm op}, \mathcal{S})$.
\end{rmk}
\begin{lem}\label{lem;Flogalgebra}
Let $A$ be a reduced $\F_p$-algebra and $\alpha:L\to W(A)$ be a pre-log structure such that 
$F_{W(A)}(\alpha(l))=\alpha(l)^p$ for $l\in L$, where $F_{W(A)}$ is the Witt vector Frobenius. 
Then, we have $\alpha=[-]\circ \alphab$, where 
$\alphab:L\to A$ is the composite of $\alpha$ and the projection $W(A)\to A$ and 
$[-]:A\to W(A)$ is the Teichmuller lift. 
\end{lem}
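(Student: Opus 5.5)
Fix $l\in L$ and set $x=\alpha(l)\in W(A)$ and $a=\alphab(l)\in A$, so that $a$ is the zeroth ghost component of $x$. We must show $x=[a]$. The plan is to use Witt coordinates, $x=(x_0,x_1,x_2,\dots)$ with $x_0=a$. Since $A$ has characteristic $p$, the Witt vector Frobenius is $F_{W(A)}(x_0,x_1,\dots)=(x_0^p,x_1^p,\dots)$. The hypothesis is that this equals $x^p$. We also have $F$ on $W(A)$ satisfying $V F = F V = p$; in characteristic $p$ this amounts to $p\cdot y = V(F(y))$, with $p=VF$ as operators.

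The cleanest route is to write $x=[a]+V(y)$ for some $y\in W(A)$. This is always possible, since $x-[a]$ has vanishing zeroth component. We then compute $x^p$ modulo higher $V$-filtration and compare it with $F(x)=[a^p]+V(F y)$. Expanding by the binomial theorem gives
\[
x^p=[a]^p+\sum_{0<i<p}\binom pi [a]^{p-i}V(y)^i+V(y)^p .
\]
For the middle terms we use $V(y)V(z)=pV(yz)=V(FV(yz))$ and $[b]V(y)=V([b^p]y)$. Since $p$ divides $\binom pi$, each middle term is $p$ times an element of $V W(A)$, hence lies in $V^2W(A)$. Likewise $V(y)^p=p^{p-1}V(y^p)$ lies in $V^2W(A)$ as well when $p\ge2$. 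So $x^p\equiv [a^p]\bmod V^2$, while $F(x)\equiv[a^p]+V(Fy)$. Comparing first Witt coordinates gives $y_0^p=0$. Because $A$ is reduced, $y_0=0$, and hence $x\in[a]+V^2W(A)$.

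To finish we induct on the level. Suppose $x=[a]+V^n(z)$ with $n\ge1$. Then
\[
F(x)=[a^p]+V^n(F z),
\]
while
\[
x^p=[a^p]+\sum_{i\ge1}\binom pi[a]^{p-i}V^n(z)^i .
\]
Using $[b]V^n(z)=V^n([b^{p^n}]z)$ and $V^n(z)V^n(w)=p^nV^n(zw)$, every term with $i\ge1$ is divisible by $p$ inside $V^nW(A)$ (the term $i=p$ because $n\ge1$ forces $p^{n(p-1)}$ to appear). Since $p=VF$ raises the $V$-level, all these terms lie in $V^{n+1}W(A)$. Comparing the $n$-th Witt coordinates then gives $z_0^p=0$, so $z_0=0$ by reducedness, and therefore $x\in[a]+V^{n+1}W(A)$. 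Since $W(A)$ is $V$-adically separated (it equals $\varprojlim W_n(A)$), we conclude $x=[a]$.

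The main obstacle is bookkeeping: we need to check that every correction term in $x^p$ really lands in $V^{n+1}W(A)$, using only the identities $p=VF$, $[b]V(z)=V([b^p]z)$ and $V(z)V(w)=pV(zw)$ valid over $\F_p$-algebras. An alternative that sidesteps this, if $A$ were perfect, is to use the ghost-free description $x=\sum V^i[x_i^{1/p^i}]$. However, reducedness alone only gives injectivity of Frobenius, so the inductive comparison above is the argument to commit to.
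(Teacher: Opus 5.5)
Your argument is correct. Over $\mathbb{F}_p$-algebras one has $[b]V^n(z)=V^n([b^{p^n}]z)$, $V^n(z)V^n(w)=p^nV^n(zw)$, $pV^n(w)=V^{n+1}(Fw)$ and $FV^n=V^nF$. These give $x^p\equiv[a^p]$ modulo $V^{n+1}W(A)$ whenever $x\in[a]+V^nW(A)$ with $n\geq1$. Comparing with $F(x)=[a^p]+V^n(Fz)$ yields $z_0^p=0$, reducedness gives $z_0=0$, and $\bigcap_nV^nW(A)=0$ finishes the proof. Your separate first step is just the case $n=1$ of the induction, whose hypothesis $x\in[a]+VW(A)$ holds automatically.

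The paper gives no argument of its own, only a pointer to \cite[Proposition~3.13]{Ya}. So the useful comparison is with the short conceptual proof that fits the framework used later in the paper (cf.\ the appeal to \cite[Proposition~3.6.3]{BLM} and to $W$ as a cofree $\delta$-ring in the proof of Proposition~\ref{prop;omegaUniversal}).

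In that proof, reducedness is used only once, globally. Frobenius on $A$ is injective, hence so is $F$ on $W(A)$, hence $p=VF$ is injective and $W(A)$ is $p$-torsion free. Therefore the hypothesis $F(x)=x^p$ says exactly $\delta(x)=0$. Now $T\mapsto x$ and $T\mapsto[x_0]$ are two $\delta$-ring maps $\mathbb{Z}[T]\to W(A)$, with $\delta(T)=0$, and they have the same composite with the projection $W(A)\to A$. They therefore agree, by the universal property of $W(A)$ as the cofree $\delta$-ring on $A$. Using $\mathbb{Z}[L]$ with $e^l\mapsto e^{pl}$ instead of $\mathbb{Z}[T]$ treats all of $L$ at once.

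That route is shorter but rests on the cofreeness of Witt vectors. Yours is entirely elementary, and it makes visible that reducedness is exactly what converts $z_0^p=0$ into $z_0=0$ at each level of the $V$-filtration.
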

\begin{proof}
The proof is verbatim the same as \cite[Proposition 3.13]{Ya}.
\end{proof}
\medbreak

We say that a pre-log structure on $W(A)$ as in Lemma \ref{lem;Flogalgebra} is of Teichmuller type. Note that for such log structures, the pair $(F_{W(A)}, p)$ where $p$ denotes the multiplication by $p$ on $L$, defines a morphism of pre-log rings $(W(A),L)\to (W(A),L)$. 

\begin{defn}\label{def;logDA}(\cite[Definition 3.1]{Ya})
A (pre)-log Dieudonn\'e algebra over $\sR$ is a tuple
\[ \sA^*=((A^*,d:A^*\to A^{*+1}),F\colon A^*\to A^*,\alpha\colon L \to A^0,\delta\colon L\to A^1)\]
where
\begin{enumerate}
\item
$(A^*,d)$ is a strict cdga over $R$ (cohomologically graded),
\item
$F:A^*\to A^*$ is a graded algebra homomorphism,
\item
$\sA^0=(A^0,\alpha)$ is a (pre)-log algebra over $\sR$,
\item
$\delta$ is a map of monoids,
\end{enumerate}
satisfying the following conditions:
\begin{itemize}
\item[(i)]
$(A^*,d ,F)$ is a Dieudonn\'e algebra in the sense of \cite[Definition 3.1.2]{BLM}.
\item[(ii)]
$(A^0,\alpha,F_{|A^0})$ is an $F$-pre-log algebra over $\sR$.

\item[(iii)]
$\delta=F\delta$.
\item[(iv)]
$(d:A^0\to A^1,\delta)$ is a log derivation of $\sA^0$ over $\sR$ (in particular, $\delta(n)=0$ for every $n$ in $N$) and 
$L\rmapo{\delta} A^1\rmapo{d} A^2$ is $0$. 
\end{itemize}
We say that a log Dieudonn\'e algebra $\sA^*$ is special if so is the $F$-pre-log algebra $(\sA^0,F_{A^0} = F)$ over $\sR$.

A morphism $\sA^*\to \sB^*$ of log Dieudonn\'e algebras is a pair $(f,\psi)$ consisting of 
a morphism 
\[ f\colon (A^*,d_A,F_A)\to (B^*,d_B,F_B)\] of Dieudonn\'e algebras over $R$ and a morphism $\psi\colon L_A\to L_B$ of monoids over $N$ satisfying the conditions:
\begin{itemize}
\item[(i)]
The following diagram commutes:
\[\xymatrix{
L_A \ar[r]^-\psi\ar[d]^{\alphab_A} & L_B\ar[d]^{\alphab_B}\\
(A^0/p)_{\red} \ar[r]^-f &(B^0/p)_{\red}\\}\]
where $\alphab_A$ is the composite of $\alpha_A:L_A\to A^0$ and $A^0\to \redd{A^0/p}$.
\item[(ii)]
$f\circ \delta_A=\delta_B\circ \psi$.
\end{itemize}

We let $\DAlogS$ be the category of $p$-torsion free pre-log Dieudonn\'e algebra and let $\DAlogSs$ be the full subcategory spanned by special objects.
\end{defn}
\begin{rmk}\label{rmk;logDA}
The condition (i) is more relaxed than \cite[Definition 3.3(2)]{Ya}. 
\end{rmk}

\begin{defn}
    Let $\sA=(A,\alpha:L\to A)\in \mathrm{PreLog}_{\sR}$. We denote by $\omega_{\sA/\sR}^*$ the log de Rham complex of $\sA$ over $\sR$. It is a (strict) cdga over $A$. Following \cite[Variant 3.3.1]{BLM}, we denote by  $\widehat{\omega}_{\sA/\sR}^*$ the inverse limit $\varprojlim_n (\omega_{\sA/\sR}^* /p^n)$, and refer to it as the completed log de Rham complex. Note that we have $\widehat{\omega}_{\sA/\sR}^* /p \simeq {\omega}_{\sA/p/\sR/p}^*$.
\end{defn}

The argument of  \cite[Proposition 4.5]{Ya} gives that the log de Rham complex of a special $F$-pre-log algebra has a canonical Frobenius lift (note that as in \cite[Variant 3.3.1]{BLM}, the Frobenius lift induces a lift on the completed log de Rham complex, still denoted by $F$). We summarize this fact in the following Proposition (with some small modifications compared to \cite{Ya}). 
\begin{prop}\label{prop;omegaDAlog}
Let $(\sA=(A,\alpha:L\to A),F_A)$ be a $p$-torsion free special $F$-pre-log algebra over $\sR$. Assume that
$\omega^i_{\sA/\sR}$ is $p$-torsion free for every $i\geq 0$. Then, there exists a unique graded ring homomorphism
\[ F\colon \omegaSs {\sA} \to  \omegaSs {\sA}\]
which extends $F_A$ such that
\[ F(dx)=x^{p-1} dx+ d\big(\frac{F(x)-x^p}{p}\big)\qfor x\in A,\]
\[ F(d\log l)=d\log l\qfor l\in L.\]
Moreover, it satisfies $dF=pFd$ and this makes 
\[\uomegaSs{\sA}:=(\omegaSs \sA,d,F,\alpha:L\to A,d\log:L\to \omega^1_{\sA/\sR})\] an object of $\DAlogSs$. Similarly, the completed log de Rham complex $\widehat{\omega}_{\sA/\sR}^*$ gives rise to an object $\uomegaSshat \sA=(\widehat{\omega}_{\sA/\sR},d,F,\alpha:L\to A,d\log:L\to \widehat{\omega}_{\sA/\sR}^1)$ of  $\DAlogSs$, equipped with a canonical ``completion map'' $\uomegaSs \sA \to \uomegaSshat \sA$. 
\end{prop}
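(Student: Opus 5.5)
We follow the strategy of \cite[Proposition 4.5]{Ya} and \cite[Proposition 3.2.1]{BLM}: first build $F$ on the absolute level, then check that it descends to the relative log de Rham complex over $\sR$. Define $\omega^*$ as the quotient of $\Omega^*_{A}\otimes \Lambda^*(A\otimes L^{gp})$ by the usual log relations $d\alpha(l)=\alpha(l)\,d\log l$, and by $d\omega_R$ together with $d\log n$ for $n\in N$.

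Since $\omega^*_{\sA/\sR}$ is $p$-torsion free, $\omega^*_{\sA/\sR}\subset \omega^*_{\sA/\sR}[1/p]$, and uniqueness is immediate. A graded ring map is determined by its values on $A$, on $dA$ and on $d\log L$, because these generate $\omega^*$ as a graded algebra. Moreover, the defining formula is forced on the rational level: $pF(dx)=dF(x)$ whenever $dF=pFd$ holds rationally.

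For existence we construct $F$ in three steps.

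\textbf{Step 1.} Define the degree-one map by $F(a\,dx)=F_A(a)\big(x^{p-1}dx+d\delta(x)\big)$ and $F(b\,d\log l)=F_A(b)\,d\log l$, where $\delta(x)=(F_A(x)-x^p)/p$. This makes sense because $A$ is $p$-torsion free.

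\textbf{Step 2.} Check that these assignments respect the relations.
\begin{itemize}
\item Additivity and the Leibniz rule in $x$: this is the standard verification of \cite{BLM}. The map $x\mapsto x^{p-1}dx+d\delta(x)$ equals $p^{-1}d(F_A(x))$ computed in $\omega^1[1/p]$, and $p^{-1}dF_A$ is a derivation twisted by $F_A$, so the identities hold in $\omega^1[1/p]$ and therefore in $\omega^1$.
\item The log relation for $l\in L^o$: by \eqref{eq;def;Flogalgebra} we have $F_A(\alpha(l))=\alpha(l)^p$, hence
\[p^{-1}d(\alpha(l)^p)=\alpha(l)^{p-1}d\alpha(l)=\alpha(l)^p\,d\log l=F_A(\alpha(l))\,F(d\log l).\]
This is exactly where the specialness hypothesis enters.
\item $R$-linearity: for $r\in R$ the map $F_A$ restricts to $F_R$, so $p^{-1}dF_R(r)=0$ in the relative complex, because $d$ kills $R$ and $p$-torsion freeness lets us divide by $p$.
\item $n\in N$: since $d\log n=0$ in $\omega^1_{\sA/\sR}$, we must verify $F(d\alpha(n))=0$. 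But $d\alpha(n)=\alpha(n)\,d\log n=0$, and $F(d\alpha(n))=p^{-1}d(F_R(\alpha_R(n)))=0$, since $F_R(\alpha_R(n))\in R$ is killed by $d$.
\end{itemize}
The key point of the last item is that we never use $F_R(\alpha(n))=\alpha(n)^p$ for $n\in N_2$; this is why the relaxed condition \eqref{eq;SN} suffices.

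\textbf{Step 3.} Extend multiplicatively to all degrees. Since $\omega^*$ is the exterior algebra on $\omega^1$ modulo nothing further for the strict cdga, and $F$ on $\omega^1$ is $F_A$-semilinear, the extension is automatic.

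The relation $dF=pFd$ is checked on generators inside $\omega^*[1/p]$. It holds on $A$ by the construction of $F(dx)$. It holds on $dx$ because $F(dx)=p^{-1}dF(x)$ is closed, and on $d\log l$ because $d\log l$ is closed and fixed by $F$.

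It remains to check that $\uomegaSs{\sA}$ lies in $\DAlogSs$. The Dieudonn\'e condition of \cite[Definition 3.1.2]{BLM} also requires $F(x)\equiv x^p \bmod p$ in degree $0$, which holds by hypothesis; the strict cdga property holds by $p$-torsion freeness. Condition (iii), $\delta=F\delta$, is $F(d\log l)=d\log l$. Condition (iv) holds by definition, and $d\circ d\log=0$. Specialness is inherited from $(\sA,F_A)$.

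For the completed complex: $F$ preserves $p^n\omega^*$, so it passes to $\varprojlim_n \omega^*/p^n$ compatibly with $d$. The completion of a $p$-torsion free complex is $p$-torsion free, so the same axioms hold. The completion map is then a morphism in $\DAlogSs$, with $\psi=\id_L$.

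The main obstacle is Step 2, the well-definedness in degree one: compatibility with the log relations and with the relative quotient by $N$. Both are handled uniformly by working inside $\omega^*[1/p]$, where $F=p^{-i}F_A^*$ is evidently well defined, and then showing that the generators land in the integral lattice.
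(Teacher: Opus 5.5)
Your proposal is correct and follows the paper's proof essentially step by step. Both arguments build the degree-one map from an $F_A$-twisted log derivation, check the log relation on $L^o$ via specialness and on $N$ via vanishing of relative differentials, extend multiplicatively, verify $dF=pFd$ on generators, and pass to the completion. The one variation is that you get additivity and the twisted Leibniz rule by identifying $x\mapsto x^{p-1}dx+d\delta(x)$ with $p^{-1}\,d\circ F_A$ inside $\omega^1_{\sA/\sR}[1/p]$, which uses the torsion-freeness of $\omega^1$; the paper instead cites Yao's explicit $\theta_A$ identities and needs that hypothesis only for membership in $\DAlogSs$. You also leave implicit that the relation for mixed $l=l^o+n$ follows from the two cases by the Leibniz rule.
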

\begin{proof}The proof is elementary. 
Put
\[
\theta_A(x):=\frac{F_A(x)-x^p}{p}\in A.
\]
Regard $\omega^1_{\sA/\sR}$ as an $A$-module through $F_A$, and define
\[
D(x):=x^{p-1}dx+d\theta_A(x)\;\text{ for} x\in A,
\qquad
\Delta(l):=d\log l\;\text{ for } l\in L.
\]
Exactly as in the proof of \cite[Proposition~4.5]{Ya}, the explicit
formula for $\theta_A$ shows that $D$ is additive and satisfies
\[
D(xy)=F_A(x)D(y)+F_A(y)D(x).
\]
The only additional verification is that $(D,\Delta)$ is relative to
$\sR$ and satisfies the logarithmic relation. If $r\in R$, then
$dr=0$ and
\[
\theta_A(r)=\frac{F_R(r)-r^p}{p}\in R,
\]
so $D(r)=0$. Also $\Delta(n)=0$ for $n\in N$. If $l\in L^o$, then
specialness gives $F_A(\alpha(l))=\alpha(l)^p$, whence
\[
D(\alpha(l))
=
\alpha(l)^{p-1}d\alpha(l)
=
F_A(\alpha(l))\,d\log l.
\]
If $n\in N$, both $d\alpha(n)$ and $d\log n$ vanish relatively, and the
correction term $d\theta_A(\alpha(n))$ also vanishes because
$\theta_A(\alpha(n))$ comes from $R$. Hence
\[
D(\alpha(n))=0=F_A(\alpha(n))\Delta(n).
\]
Using $L=L^o\oplus N$ and the  Leibniz rule, the same identity
holds for every $l\in L$. Thus $(D,\Delta)$ is a logarithmic derivation
of $\sA$ over $\sR$ with values in the $F_A$-twist of
$\omega^1_{\sA/\sR}$.
The universal property of logarithmic differentials gives the required
$F_A$-semilinear map in degree one, and hence a unique graded ring map on
the logarithmic de Rham algebra. On the generators one has
\[
dF_A(x)
=d\bigl(x^p+p\theta_A(x)\bigr)
=p\bigl(x^{p-1}dx+d\theta_A(x)\bigr)
=pF(dx).
\]
Moreover, $dF(dx)=0=pF(d^2x)$ and
$dF(d\log l)=0=pF(d^2\log l)$. Since the de Rham algebra is generated
by $A$, the elements $dx$, and the elements $d\log l$, this proves
$dF=pFd$. The equality $F(d\log l)=d\log l$ is part of the
construction, and uniqueness follows from the same set of generators.
Finally, $F$ preserves $p^n\omega^*_{\sA/\sR}$ for every $n$, so it
extends uniquely to the $p$-adic completion. Note that the assumption that
$\omega^i_{\sA/\sR}$ is $p$-torsion free for every $i\geq 0$ is only needed to guarantee that the resulting object $\Omega^*_{\mathcal{A}/\mathcal{R}}$ (resp.~$\widehat{\Omega}^*_{\mathcal{A}/\mathcal{R}})$ is indeed an object of $\DAlogSs$ (the $p$-torsion free condition is required in the definition). 
\end{proof}

\begin{rmk}\label{rmk;prop;omegaDAlog}
Under the notation of Proposition \ref{prop;omegaDAlog},
we let $\omegaSs{\sA}$ denote $(\omegaSs \sA,d,F)$, which is the underlying Dieudonn\'e algebra of $\uomegaSs{\sA}$ forgetting log structures.
\end{rmk}

\medbreak
Recall that a morphism of pre-log rings $(R,P)\to(A,M)$ of characteristic $p$ is called of log-Cartier type (or simply of Cartier type) if it is integral \cite[Definition I.4.6.2]{ogu} and if the relative Frobenius \cite[Definition III.2.4.1]{ogu} is exact \cite[Definition I.2.1.15]{ogu}.

\begin{rmk}It turns out that an integral morphism of fine and saturated log schemes in characteristic $p$ is of Cartier type if and only if it is saturated in the sense of Tsuji \cite[Definition I.3.12]{tsuji}. 

The main reason why one cares about morphisms of Cartier type is the analogue of the classical Cartier isomorphism: we will need it only in the following form. 
Let $\sA=(A,L)$ be a pre-log ring and assume that $\Spec(\sA)$ is  log-smooth of Cartier type over $\sR/p=(R/p,N)$. Then, the Cartier isomorphism holds, i.e., there is a family of isomorphisms for each $i\geq 0$:
\eq{eq;Cartier}{
C^{-1}\colon \omega^i_{\AFt 1/(\sR/p)}=\omega^i_{\sA/(\sR/p)}\otimes_{R/p,F_R} R/p \simeq  H^i(\omega^*_{\sA/(\sR/p)}), }
where $\AFt 1=\sA\times_{\sR/p,F_R} \sR/p$. 
We refer to $C^{-1}$ as the inverse Cartier operator. 
\end{rmk}
\begin{rmk}\label{rmk:Cartier_smooth_flat}
    We will also use the following property of log smooth morphisms of Cartier type. Let $\sR = (R, N)$ be a $p$-complete, $p$-torsion free pre-log ring, and let $\sA=(A,L)$ be a log smooth algebra over $\sR$. If it is of Cartier type, then it is in particular integral (see \cite[ p.~347]{ogu}), hence by \cite[Theorem IV.4.3.5(1)] {ogu} (note that the Noetherian assumption is not used in the proof of (1) in loc.cit.), the underlying morphism $R\to A$ is flat. In particular, $A$ is $p$-torsion free. 
\end{rmk}
\begin{defn}\label{def;strictDAlog}
[See \cite[Definitions~3.8, 3.9 and 3.11]{Ya}.]
A pre-log Dieudonn\'e algebra
\[
\sA^*=(A^*,d,F,\alpha:L\to A^0,\delta:L\to A^1)
\]
over $\sR$ is \emph{saturated} if its underlying Dieudonn\'e algebra is
saturated in the sense of \cite[Definition~3.4.1]{BLM}. Let
\[
i_A\colon A^*\longrightarrow\Sat(A^*)
\]
be the saturation map of the underlying Dieudonn\'e algebra. We equip
$\Sat(A^*)$ with the same monoid $L$ and with maps
\[
\alpha_{\rm sat}:=i_A^0\circ\alpha,
\qquad
\delta_{\rm sat}:=i_A^1\circ\delta.
\]
The resulting pre-log Dieudonn\'e algebra is denoted by $\Sat(\sA^*)$.

A saturated pre-log Dieudonn\'e algebra is \emph{strict} if its
underlying saturated Dieudonn\'e algebra is strict in the sense of
\cite[Definition~3.5.6]{BLM}. If
\[
\rho_A\colon A^*\longrightarrow W(A^*)
=
\varprojlim_r A^*/\mathrm{Fil}^rA^*
\]
is the strictification map of the underlying Dieudonn\'e algebra, we
equip $W(A^*)$ with the same monoid $L$ and with maps
\[
\alpha_W:=\rho_A^0\circ\alpha,
\qquad
\delta_W:=\rho_A^1\circ\delta.
\]
We write $W(\sA^*)$ for the resulting strict pre-log Dieudonn\'e
algebra and set
\[
\WSat(\sA^*):=W(\Sat(\sA^*)).
\]
\end{defn}

\begin{lem}\label{lem;log-saturation-strictification}
The constructions $\Sat(-)$ and $W(-)$ are respectively left adjoint to
the inclusions of saturated and strict pre-log Dieudonn\'e algebras.
They preserve special objects. Hence $\WSat(-)$ restricts to a
left adjoint
\begin{equation}\label{eq:WSat_left_adjoint}
\WSat(-)\colon
\DAlogSs\longrightarrow(\DAlogSs)^{\rm str}.
\end{equation}
\end{lem}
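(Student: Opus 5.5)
We reduce the lemma to the corresponding non-logarithmic statements of \cite{BLM}: $\Sat$ is left adjoint to the inclusion of saturated Dieudonn\'e algebras \cite[Proposition~3.4.5]{BLM}, and $W$ is left adjoint to the inclusion of strict ones \cite[Proposition~3.5.7]{BLM}. The log data are carried along passively. Throughout, the monoid $L$ and its maps $\alpha,\delta$ are transported by post-composition with $i_A$ or $\rho_A$.

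\emph{Step 1: the constructions are pre-log Dieudonn\'e algebras.} Conditions (i)--(iv) of Definition~\ref{def;logDA} must hold for $\Sat(\sA^*)$ and $W(\sA^*)$.
\begin{itemize}
\item Since $i_A,\rho_A$ are maps of Dieudonn\'e algebras, they commute with $F$ and $d$, so $\delta=F\delta$ and $d\circ\delta=0$ are preserved.
\item The log derivation property and $\delta(N)=0$ are preserved, because a degree-$0$/degree-$1$ map of cdgas carries log derivations to log derivations.
\item $F$ remains a Frobenius lift in degree $0$: it is a ring endomorphism, and for a saturated $p$-torsion free Dieudonn\'e algebra, \cite{BLM} shows that $F\equiv(-)^p$ mod $p$ in degree $0$.
\item $p$-torsion freeness holds by construction in \cite{BLM}.
\end{itemize}

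\emph{Step 2: the adjunctions.} Let $\sB^*$ be saturated (resp.\ strict) and let $(f,\psi)\colon\sA^*\to\sB^*$ be a morphism. The universal property in \cite{BLM} gives a unique Dieudonn\'e map $\tilde f$ out of $\Sat(A^*)$ (resp.\ $W(A^*)$) with $\tilde f\circ i_A=f$ (resp.\ $\tilde f\circ\rho_A=f$). We keep the same $\psi$.

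Condition (ii) of a morphism follows from $\delta_{\rm sat}=i_A^1\delta$. For condition (i) we need that $i_A^0$ and $\rho_A^0$ induce maps on $(-/p)_\red$ that are compatible with $\alphab$ and are respected by $\tilde f$. This is immediate from $\tilde f i_A=f$.

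Conversely, any morphism out of $\Sat(\sA^*)$ restricts along $(i_A,\id_L)$. These two operations are mutually inverse by the uniqueness in \cite{BLM}, since the $\psi$ component is unchanged.

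\emph{Step 3: specialness is preserved.} Suppose $L=L^o\oplus N$ with $F(\alpha(l))=\alpha(l)^p$ for $l\in L^o$. Applying the ring map $i_A^0$, which commutes with $F$, gives the same identity for $\alpha_{\rm sat}$; the same works for $\rho_A^0$. The structure map $N\to L$ is unchanged.

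It follows that $\WSat=W\circ\Sat$ is a composite of left adjoints. It restricts to special objects, and the inclusion $(\DAlogSs)^{\rm str}\subset\DAlogSs$ is full, so it gives the left adjoint \eqref{eq:WSat_left_adjoint}.

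The main obstacle is condition (i) on morphisms, because of its relaxed form (Remark~\ref{rmk;logDA}). It is phrased via $(A^0/p)_\red$, and we must check that $i_A^0$ and $\rho_A^0$ induce maps on these reduced quotients compatible with $\alphab$. The approach is simply to note that they are ring maps, which handles this.
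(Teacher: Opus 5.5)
Your proposal is correct and follows essentially the same route as the paper. You invoke the Bhatt--Lurie--Mathew adjunctions for the underlying Dieudonn\'e algebras, keep the monoid $L$ and transport $\alpha,\delta$ along $i_A$ (resp.\ $\rho_A$), and get preservation of specialness by applying the $F$-equivariant ring map $i_A^0$ (resp.\ $\rho_A^0$) to $F(\alpha(l))=\alpha(l)^p$. The extra checks you include are fine and are left implicit in the paper: that $\Sat(\sA^*)$ and $W(\sA^*)$ satisfy the axioms of Definition~\ref{def;logDA}, and that condition (i) on morphisms follows from $\tilde f\circ i_A^0\circ\alpha=f\circ\alpha$.
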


\begin{proof}
The underlying adjunctions are those of Bhatt--Lurie--Mathew. Let
$(f,\psi)\colon\sA^*\to\sB^*$ be a morphism with $\sB^*$ saturated.
The underlying morphism of Dieudonn\'e algebras extends uniquely to
$\Sat(A^*)\to B^*$. Since the monoid is unchanged and
$\alpha_{\rm sat}=i_A^0\alpha$, $\delta_{\rm sat}=i_A^1\delta$, the
extended map still satisfies  compatibility with $\delta$. This proves the saturation adjunction.
The proof for strictification is identical, using $\rho_A$ in place of
$i_A$.

If $\sA^*$ is special and $l\in L^o$, then
\[
F(\alpha_{\rm sat}(l))
=i_A^0(F(\alpha(l)))
=i_A^0(\alpha(l)^p)
=\alpha_{\rm sat}(l)^p,
\]
and the same calculation with $\rho_A$ proves that strictification
preserves the condition of being special.
\end{proof}

We set (see \S\ref{Special pre-log algebras} for $F_R$)
\[\Rperf=\colim(R\rmapo{F_R} R \rmapo{F_R} R\rmapo{F_R} \cdots).\]
The map $F_R$ induces a ring map $\Rperf\to \Rperf$, which is denoted also by $F_R$.  

\begin{lem}\label{lem;WSatomegaV}
Let $ (\sA=(A,\alpha:L \to A),F_A)$ be a special $F$-pre-log algebra over $\sR$ such that 
$\sR\to \sA$ is log-smooth. Assume that $\sA/p$ is  of Cartier type over $\sR/p$, and that $A$ is $p$-torsion free. 
\begin{itemize}
\item[(1)]
The natural map $\omega^*_{\sA/\sR}\to \WSat(\omegaSs{\sA})$ induces a quasi-isomorphism (cf. Remark \ref{rmk;prop;omegaDAlog})
\[\omega^*_{\sA/\sR}/p^r\otimes_R \Rperf \simeq \WSat(\omegaSs{\sA})/V^r\qfor r>0,\]
where $\WSat(\omegaSs{\sA})/V^r= \WSat(\omegaSs{\sA})/\im(V^r)+\im(dV^{r})$ (cf. \cite[2.5.1]{BLM}).
\item[(2)]
The above map induces isomorphisms
\[ \omega^*_{\sA/\sR}/p\otimes_R \Rperf \simeq \widehat{\omega}_{\sA/\sR}^*/p \otimes_R \Rperf\simeq  \WSat(\omegaSshat{\sA})/V.\]
\end{itemize}
\end{lem}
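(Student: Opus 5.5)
The plan is to reduce to the non-logarithmic argument of Bhatt--Lurie--Mathew (their Theorem 4.2.3 / Proposition 4.1.4, the comparison $\Omega^*_{A}/p^r \simeq \WSat(\Omega^*_A)/V^r$ for smooth $A$ with Frobenius lift) in the form adapted by Yao to the log setting (\cite[Theorem 4.11, Proposition 4.9]{Ya}), checking that the only inputs used are: $p$-torsion freeness of $\omega^*_{\sA/\sR}$ (Remark \ref{rmk:Cartier_smooth_flat} together with log smoothness, which makes each $\omega^i_{\sA/\sR}$ a finite projective $A$-module), the Frobenius structure of Proposition \ref{prop;omegaDAlog}, and the Cartier isomorphism \eqref{eq;Cartier}. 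None of these uses that the log structure of $\sR$ is of Teichm\"uller type, so Yao's proof should go through for our relaxed base.

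First I will treat $r=1$. The key point is that the Dieudonn\'e algebra $M^*=\omegaSs{\sA}$ satisfies the hypothesis of BLM's ``Cartier criterion'': the map $F\colon M^*/p\to H^*(M^*/p)$ induced by $F$ (which is divisible by $p^i$ in degree $i$ after saturation) is, via the formula $F(dx)=x^{p-1}dx+d\theta_A(x)$ and $F(d\log l)=d\log l$ of Proposition \ref{prop;omegaDAlog}, identified with the inverse Cartier operator $C^{-1}$ composed with the base change along $F_R$. Since $\sA/p$ is of Cartier type over $\sR/p$, \eqref{eq;Cartier} says this is an isomorphism after linearizing along $F_R$. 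BLM's argument then shows that after base change to $\Rperf$ (which makes $F_R$ invertible and so converts the $F_R$-semilinear Cartier isomorphism into a bijection) the saturation map $M^*\otimes_R\Rperf\to \Sat(M^*)\otimes\ldots$ and the map to $\WSat(M^*)/V$ induce $M^*/p\otimes_R\Rperf\simeq \WSat(M^*)/V$. Concretely I will compute $\Sat(M^*)$ as the subcomplex $\{x\in M^*[F^{-1}] : \ldots\}$ and use the Cartier isomorphism to identify $\WSat(M^*)/V=\Sat(M^*)/(\im V+\im dV)$ with $M^*/p$ base-changed to the perfection, exactly as in BLM Prop.~4.1.4.

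Next, for general $r$, I will induct using the short exact sequences $0\to \WSat/V^{r-1}\xrightarrow{V}\ldots$; more precisely BLM's exact sequence relating $\WSat/V^r$, $\WSat/V^{r-1}$ and $\WSat/V$ via $p$ (using that for strict saturated objects $\mathcal{W}_r$ quotients satisfy $0\to \WSat/V\xrightarrow{p^{r-1}}\WSat/V^r\to\WSat/V^{r-1}\to0$ up to quasi-isomorphism), compared with $0\to\omega^*/p\xrightarrow{p^{r-1}}\omega^*/p^r\to\omega^*/p^{r-1}\to0$, which is exact by $p$-torsion freeness and stays exact after $\otimes_R\Rperf$ ($\Rperf$ flat over $R$ as a colimit of $R$'s along... we need $F_R$ flat; alternatively note the comparison is of complexes of $p$-torsion-free modules). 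Five lemma gives (1). For (2), the first isomorphism is just $\widehat\omega^*/p\simeq\omega^*/p$; the second follows since the completion map $\omegaSs\sA\to\omegaSshat\sA$ is a $p$-adic equivalence and $\WSat$ only depends on the derived $p$-completion (BLM: $\WSat(M)\simeq\WSat(\widehat M)$ since strict objects are $p$-complete and saturation commutes with $p$-completion for $p$-torsion free $M$), then apply (1) with $r=1$.

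The main obstacle is the $r=1$ step: verifying that the log Frobenius on $\omega^*_{\sA/\sR}/p$ really induces the inverse Cartier operator when $N_2$ has non-Teichm\"uller image (so $F_R(\alpha(n))\neq\alpha(n)^p$), and handling the $F_R$-twist which forces the base change to $\Rperf$. I would check this on generators $x$, $dx$, $d\log l$ with $l\in L^o$, using that $d\log n=0$ for $n\in N$, so the $N_2$-part never enters.
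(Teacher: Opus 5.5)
For (1), your route works but is packaged differently from the paper's. The paper never base-changes to $\Rperf$ before saturating. It uses the Berthelot--Ogus divided-Frobenius quasi-isomorphisms $\omega^*_{\sAFt{n+1}/\sR}\simeq\eta_p(\omega^*_{\sAFt{n}/\sR})$ for the iterated Frobenius twists. Their filtered colimit along $\alpha_F$ does two things at once: it computes $\Sat(\omega^*_{\sA/\sR})/p^r=\colim_n\eta_p^n(\omega^*_{\sA/\sR})/p^r$, and on the other side it produces $\omega^*_{\sA/\sR}/p^r\otimes_R\Rperf$. Then \cite[2.7.3]{BLM} turns $\Sat/p^r$ into $\WSat/V^r$.

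Your version applies the BLM Cartier criterion to $M'=\omega^*_{\sA/\sR}\otimes_R\Rperf$, where the semilinear Cartier isomorphism becomes bijective. This needs two checks you have not made:
\begin{enumerate}
\item[(a)] $\Sat(M')\cong\Sat(\omega^*_{\sA/\sR})$. This is true, because $\Sat(\omega^*_{\sA/\sR})$ is already an $\Rperf$-algebra: each $F^{-n}(r)$ exists there and is $d$-closed.
\item[(b)] $H^*(M'/p)\cong H^*(\omega^*_{\sA/\sR}/p)\otimes_{R/p}\Rperf/p$. This is not automatic, since $\Rperf$ need not be flat over $R$ in the generality of \S2. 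It does hold: the Cartier isomorphism makes the cohomology $R/p$-flat, and descending induction then makes the cycles and boundaries $R/p$-flat.
\end{enumerate}
The paper's colimit over twists avoids (b) entirely, since it only uses exactness of filtered colimits. Your induction on $r$ is harmless; it is the same d\'evissage that underlies the Cartier criterion.

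The genuine gap is in (2). There the claim is a termwise \emph{isomorphism} $\widehat\omega^*_{\sA/\sR}/p\otimes_R\Rperf\to\WSat(\widehat\omega^*_{\sA/\sR})/V$. Your argument ends with ``apply (1) with $r=1$'', which only gives a quasi-isomorphism. Your completion-invariance remark is correct but does not address this. What is missing is a comparison of the Frobenius structures on the two sides. Consider the square whose left vertical arrow is $C^{-1}\colon\omega^*_{\sA/\sR}/p\otimes_R\Rperf\xrightarrow{\sim}H^*(\omega^*_{\sA/\sR}/p\otimes_R\Rperf)$. Its right vertical arrow is $F\colon\WSat(\widehat\omega^*_{\sA/\sR})/V\to H^*(\WSat(\widehat\omega^*_{\sA/\sR})/V)$, which is an isomorphism for saturated Dieudonn\'e complexes by \cite[2.7.1, 2.7.2]{BLM}. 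The square commutes by the formulas $F(dx)=x^{p-1}dx+d\bigl((F(x)-x^p)/p\bigr)$ and $F(d\log l)=d\log l$. The bottom arrow is an isomorphism by (1), so the top arrow is bijective in every degree. This is exactly the paper's proof of (2). Your phrase ``identify $\ldots$ exactly as in BLM'' in the $r=1$ step does not supply this argument.
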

\begin{proof}

For $n\geq 1$, define $\AFt n$ and $F_{\AFt n}:\AFt n\to \AFt n$ inductively by $\AFt n=\AFt{n-1}\otimes_{R,F_R} R$ and $F_{\AFt n}=F_{\AFt {n-1}}\otimes F_R$. 
Then,  $F_{\AFt n}$ factors as 
\[\AFt n \rmapo{-\otimes 1_R} \AFt {n+1} \rmapo{F_{\AFt n/R}} \AFt n,\] 
where the former is the base change map and $F_{\AFt n/R}$ is a homomorphism over $R$. By the assumption, we have a decomposition $L=L^o\oplus N$ and let
$\alpha^{(n)}:L\to \AFt n$ be the map which on $L^o$ is the composite of $\alpha$ and the base change map $A\to \AFt n$ and on $N$ is the composite of $\alpha_R: N\to R$ and the structure map of the $R$-algebra $\AFt n$.
Then, the pair of $\sAFt n=(\AFt n,\alpha^{(n)}:L \to \AFt n)$ and $F_{\AFt n}$
gives an $F$-pre-log algebra over $\sR$ and 
\[F_{\sAFt n/\sR}=(F_{\AFt n/R},p/N) :\sAFt {n+1}\to \sAFt n\]
is a map of $F$-pre-log algebras over $\sR$, where $p/N: L \to L$ is the map of monoids given by the multiplication $p$ on $L^o$ and the identity on $N$. 

By the same proof of \cite[Th.8.3]{bo} (for this we use the assumption that $\mathcal{A}/p$ is of Cartier type), we have a quasi-isomorphism
\[(F_{\sAFt{n}/\sR})_*: \omega^*_{\sAFt {n+1}/\sR}\simeq \eta_p(\omega^*_{\sAFt {n}/\sR}).\]
In view of \cite[Proposition 2.4.6]{BLM} (note that since $A$ is $p$-torsion free, and $\sA$ is log smooth over $\sR$, the log differentials are locally free $A$-modules, hence $p$-torsion free) 
these induce quasi-isomorphisms for $r>0$
\[\upsilon_n:  \omega^*_{\sAFt {n}/\sR}/p^r\simeq \eta_p(\omega^*_{\sAFt {n-1}/\sR})/p^r
\simeq \eta_p^2(\omega^*_{\sAFt {n-2}/\sR})/p^r \simeq \cdots \simeq \eta_p^n(\omega^*_{\sA/\sR})/p^r.\]
They fit into a commutative diagram
\[ \xymatrix{
 & \omega^*_{\sAFt {1}/\sR}/p^r \ar[r]\ar[d]_\simeq^{\upsilon_1} & 
\omega^*_{\sAFt {2}/\sR}/p^r \ar[r]\ar[d]_\simeq ^{\upsilon_2} & \cdots \\
\omega^*_{\sA/\sR}/p^r \ar[r]^-{\alpha_F}
& \eta_p(\omega^*_{\sA/\sR}))/p^r \ar[r]^-{\alpha_F}
& \eta_p^2(\omega^*_{\sA/\sR})/p^r \ar[r]^-{\alpha_F}& \cdots \\}\]
where $\alpha_F$ is defined as \cite[2.1.4]{BLM}. 
Noting the base change isomorphism
\[ \omega^*_{\sAFt {n+1}/\sR} \simeq \omega^*_{\sAFt {n}/\sR}\otimes_{R,F_R} R,\]
this yields a quasi-isomorphism
\[ \omega^*_{\sA/\sR}\otimes_R \Rperf/p^r=\omega^*_{\sA/\sR}/p^r\otimes_{R}\Rperf/p^r
\simeq \Sat(\omegaSs{\sA})/p^r\simeq \WSat(\omegaSs{\sA})/V^r,\]
where the last quasi-isomorphism follows from \cite[2.7.3]{BLM}. This proves (1).

Next, we prove (2). Note that,  thanks to Remark \ref{rmk:Cartier_smooth_flat}, the assumption  implies that 
$\omega^*_{\sA/\sR}$ is $p$-torsion free and 
$\omega^*_{\sA/\sR}/p=\omega^*_{\sA/p/(\sR/p)}$.
We have the following commutative diagram 
\[\xymatrix{
\omega^*_{\sA/p/(\sR/p)}\otimes_R \Rperf\ar[r]^-\simeq\ar[d]_\simeq^{C^{-1}} & 
\widehat{\omega}^*_{\sA/\sR}/p\otimes_R \Rperf\ar[r]\ar[d]^{F}  &  \WSat(\omegaSshat{\sA})/V \ar[d]^F_{(**)} \\
 H^*({\omega}^*_{\sA/p/(\sR/p)}\otimes_R \Rperf) \ar[r]^\simeq  & H^*(\widehat{\omega}^*_{\sA/\sR}/p\otimes_R \Rperf) 
  \ar[r]^-{(*)} &  H^*(\WSat(\omegaSshat{\sA})/V)\\}\]
where $C^{-1}$ is the limit of isomorphisms (cf. \eqref{eq;Cartier})
\[ C^{-1}: \omega^*_{\AFt n/p/\sR/p}\simeq H^*({\omega}^*_{\AFt{n-1}/p/\sR/p}),\]
and the middle and right vertical maps are induced by the Frobenius $F$ given by Proposition \ref{prop;omegaDAlog} (see \cite[\S2.4]{BLM}, and note that the completion is harmless since we are working modulo $p$). 
We remark that the horizontal arrows are morphisms of $\Rperf$-modules, while the vertical maps are $F_A$-semilinear (cfr. with \cite[Example 7.6.5]{BLM}).
The commutativity of the left square is checked by using the formulae in Proposition \ref{prop;omegaDAlog} (see \cite[Example 3.3.5]{BLM}).  
The map $(*)$ is an isomorphism since 
$\widehat{\omega}^*_{\sA/\sR}/p\otimes_S \Rperf\to \WSat(\omegaSshat{\sA})/V$ is a quasi-isomorphism by (1).
Finally, the map $(**)$ is an isomorphism by \cite[2.7.1 and 2.7.2]{BLM}. 
This completes the proof of the lemma.
\end{proof}

\begin{prop}\label{prop;omegaUniversal}
Let $(\sA=(A,\alpha:L\to A),F_A)$ be as in Proposition \ref{prop;omegaDAlog} and 
$B^*$ be an object of $\DAlogSs$. Assume that the underlying Dieudonn\'e complex of $B^*$ is strict.  
Then, the canonical map
\[ \Hom_{\DAlogS}(\widehat{\uomega}^*_{\sA/\sR}, B^*)\to \Hom((\redd {A/p},L),(\redd{B^0/p},L_B)),\]
induced by reduction modulo $p$ on the degree zero part, is an isomorphism, where the right hand side is the set of homomorphisms of pre-log algebras over $(\sR/p)_{\rm red}$, i.e.~the set of pairs $(\fb,\psi)$ consisting of morphisms $\fb\colon   \redd {A/p}\to\redd{B^0/p}$ of $(R/p)_{\rm red}$-algebras and a map
$\psi:L\to L_B$ of monoids over $N$ such that the following diagram commutes:
\eq{eq1;omegaUniversal}{
\xymatrix{
L \ar[r]^-\psi\ar[d]^{\alphab} & L_B\ar[d]^{\alphab_B}\\
(A/p)_{\red} \ar[r]^-{\fb} &(B^0/p)_{\red}\\}}
\end{prop}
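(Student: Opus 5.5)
This is the log analogue of \cite[Proposition~4.2.? / Theorem 4.4.12-type]{BLM} (the universal property of the de Rham complex of a free $\delta$-ring into strict Dieudonn\'e algebras), as adapted by Yao \cite[Proposition 4.6]{Ya}. I will follow that argument, modified to work with the relaxed condition (i) of Definition \ref{def;logDA}.

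\emph{Injectivity.} Suppose $(f,\psi)$ and $(f',\psi')$ have the same image $(\fb,\psi)$, so in particular $\psi=\psi'$. Since $B^*$ is strict, $B^0\simeq W((B^0/V B^0))$ and $B^0/VB^0 = B^0/F$-image is reduced (\cite[Proposition 3.5.? / Remark 3.5.?]{BLM}). Moreover, $F$ on $B^0$ lifts Frobenius, which gives $B^0$ a $\delta$-structure.

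The degree-zero map $f^0\colon A\to B^0$ is a map of rings with Frobenius lifts. Since $A$ and $B^0$ are $p$-torsion free, this is a map of $\delta$-rings. By the adjunction between $\delta$-rings and Witt vectors, together with $B^0=W(B^0/V)$ and $B^0/V$ reduced of characteristic $p$, the map $f^0$ is determined by its reduction
\[
A\to B^0/V=\redd{B^0/p}.
\]
That reduction factors through $\redd{A/p}$ and equals $\fb$. Hence $f^0=f'^0$.

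Since $\widehat\omega^*_{\sA/\sR}$ is topologically generated, as a $p$-complete cdga, by $A$, $dA$ and $d\log L$, and the maps are $p$-adically continuous ($B^*$ being strict, hence $p$-complete), the equalities $f\circ d=d\circ f$ and $f\circ d\log=\delta_B\circ\psi$ force $f=f'$.

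\emph{Surjectivity.} Given $(\fb,\psi)$, I proceed in three steps.

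First, I construct $f^0$. The composite $A\to\redd{A/p}\to\redd{B^0/p}=B^0/V$ lifts uniquely to a $\delta$-map $A\to W(B^0/V)=B^0$. This lift commutes with $F$ and is $R$-linear, because $R\to B^0$ is also a $\delta$-map lifting the same reduction.

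Second, I check the log structure. I need $(f^0,\psi)$ to be a map of pre-log $\sR$-algebras up to the relaxed condition (i), which holds by construction. Strictly, condition (i) only asks for compatibility after reduction. But to extend to differentials, I need $(df^0,\delta_B\psi)$ to be a log derivation, i.e.
\[
d f^0(\alpha(l))=f^0(\alpha(l))\,\delta_B(\psi(l)).
\]
For $l\in L^o$ this goes as follows. Specialness gives $F(\alpha(l))=\alpha(l)^p$, so $f^0\alpha(l)$ is an $F$-eigen-element satisfying $F(x)=x^p$. In reduced $W(\cdot)$ such elements are Teichm\"uller (Lemma \ref{lem;Flogalgebra}). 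The same holds for $\alpha_B(\psi(l))$ when $\psi(l)\in L_B^o$, although in general I only know $\alpha_B\psi(l)$ reduces to the same class. So I instead verify the identity in $B^1$ using that $B^*$ is $p$-torsion free and $F$ is injective. Applying $F$ and $\delta_B=F\delta_B$, one reduces the identity to
\[
d(x^{p^n})=x^{p^n}\delta_B
\]
modulo $p^n$ for all $n$. This holds because $x$ and $\alpha_B\psi(l)$ agree modulo $V$ and $\alpha_B\psi(l)$ satisfies the log relation. For $n\in N$ both sides vanish.

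Third, I extend to all degrees. The universal property of $\omega^*_{\sA/\sR}$ gives a cdga map to $B^*$. It extends to the completion since $B^*$ is $p$-complete. Compatibility with $F$ is checked on the generators $x$, $dx$, $d\log l$ using the formulas of Proposition \ref{prop;omegaDAlog}: for example $F(d\log l)=d\log l$ maps to $\delta_B\psi(l)=F\delta_B\psi(l)$, and the formula for $F(dx)$ is forced by $dF=pFd$ and $p$-torsion freeness.

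The main obstacle is the second step: in the relaxed setting $\alpha_B\circ\psi$ and $f^0\circ\alpha$ need not coincide, only their reductions do. I expect to handle this with the $p$-torsion-freeness and Frobenius-eigenvalue argument sketched above, mirroring Lemma \ref{lem;Flogalgebra}.
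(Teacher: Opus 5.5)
\textbf{Verdict: there is a genuine gap, in your second step of surjectivity.} Your injectivity argument and your first and third steps agree with the paper: the unique $F$-lift $f\colon A\to B^0$ via \cite[Proposition~3.6.3]{BLM} after identifying $\redd{B^0/p}\simeq B^0/VB^0$ (note: the quotient by the image of $V$, not of $F$), then extension to the de Rham complex and checking $F$ on $dx$ and $d\log l$. The second step, however, is where the content of the proposition lies.

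\textbf{Why your sketch does not prove the log relation.} The congruence you reduce to cannot hold as written. Indeed $d(x^{p^n})=p^nx^{p^n-1}dx$ vanishes modulo $p^n$, whereas $x^{p^n}\delta_B$ in general does not: for $x=[t]$ and $\delta_B=d\log[t]$ one gets $[t]^{p^n-1}d[t]$, which is nonzero modulo $V$. Read charitably, you want $F^n(dx-x\delta_B)\in p^nB^1$ for all $n$. That would suffice, since $F$ is injective and $\bigcap_n V^nB^1=0$. But $x\equiv y:=\alpha_B\psi(l)\bmod V$ does not give it. Writing $x=y+Vw$, the linear term of $(y+Vw)^{p^n}$ contributes $y^{p^n-1}dV(w)$ to $x^{p^n-1}dx-y^{p^n-1}dy$. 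Nothing makes this divisible by $p^n$; for instance $dV[t]\notin pB^1$, since $F(dV[t])=d[t]$. A further warning sign: your argument never really uses the hypothesis \eqref{eq;SN} on $\sR$ or the specialness of $B^*$.

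\textbf{The missing idea is a case distinction on} $L_B=L_B^o\oplus N_1\oplus N_2$. Write $\psi(l)=b+n_1+n_2$ for $l\in L^o$.
\begin{itemize}
\item If $n_2=0$, then \eqref{eq;SN} and specialness of $B^*$ give $F_B(\alpha_B(\psi(l)))=\alpha_B(\psi(l))^p$. By Lemma~\ref{lem;Flogalgebra}, $\alpha_B(\psi(l))=[\alphab_B(\psi(l))]=[\fb(\alphab(l))]$, and this is also $f(\alpha(l))$. So $f(\alpha(l))=\alpha_B(\psi(l))$ on the nose, and the log relation is inherited from $B^*$.
\item If $n_2\neq0$, then $\alpha_B(n_2)\in pB^0$ by \eqref{eq;SN}. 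Hence $\alphab_B(\psi(l))=0$, so $f(\alpha(l))=[0]=0$, and both sides of $f(\alpha(l))\,\delta_B(\psi(l))=d_Bf(\alpha(l))$ vanish.
\end{itemize}
You essentially had the first case, though you stated it only for $\psi(l)\in L_B^o$ rather than $L_B^o\oplus N_1$. What you missed is that the two elements can differ only when their common reduction is $0$, and then the identity is trivial. This is precisely where the hypotheses on the base enter.
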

\begin{proof}
Given a pair $(\fb,\psi)$ as above, we want to extend it to a morphism
$f\colon {\uomega}^*_{\sA/\sR}\to B^*$ in $\DAlogS$. Note that since $B^*$ is a strict Dieudonné algebra, then it is automatically $V$-complete, hence $p$-adically complete, and therefore if the morphism $f$ exixts it automatically factors through the $p$-adic completion $\widehat{\uomega}^*_{\sA/\sR}$.  Since $V(1)=p$, the quotient  $B^0/VB^0$ is an $\F_p$-algebra and there is a natural surjective ring homomorphism $B^0/p \to B^0/VB^0$. By
\cite[Lemmma 3.6.1]{BLM} the ring $B^0/VB^0$ is reduced, hence the previous map factors as $(B^0/p)_{\red} \to B^0/VB^0$. This map is readily seen to be injective using the relation $FV(y)=py$, so that it is an isomorphism. 

Next, combining this with    \cite[Proposition 3.6.3]{BLM}, we obtain a natural isomorphism
\[\Hom_F(A,B^0)\simeq\Hom(\redd{A/p},B^0/VB^0),\]
where the left hand side is the set of $F$-homomorphisms $A\to B^0$ over $R$
and the left hand side is the set of homomorphisms $\redd{A/p}\to B^0/VB^0$ over $R/p$.
More precisely, for a given map $\fb\colon \redd{A/p} \to B^0/VB^0$ over $R/p$, there exists a unique $F$-homomorphism $f\colon A\to B^0$ over $A$ which fits into the commutative diagram
\eq{eq2;omegaUniversal}{ \xymatrix{
A\ar[r]^-f \ar[d]^{\upsilon_A} & B^0\ar[d]^{\upsilon_B}_\simeq\\
W(\redd{A/p}) \ar[r]^-{W(\fb)} \ar[d] & W(B^0/VB^0)\ar[d]\\
\redd{A/p} \ar[r]^-{\fb} & B^0VB^0
}
}
where $\upsilon_A$ is the $F$-homomorphism induced by the universality of 
$W(\redd{A/p})$ as the cofree $\delta$-ring on $\redd{A/p}$ and similarly for 
$\upsilon_B$, which is an isomorphism by \cite[Proposition 3.6.2]{BLM} (the map $W(\redd{A/p})\to B^0$  induced by composition with the inverse of $v_B$ is precisely the one provided by \cite[Proposition 3.6.3]{BLM}).
Note that $f$ is automatically a map of $R$-algebras. Indeed, let $\iota_A\colon R\to A$ and $\iota_B\colon R\to B^0$ be the structure maps. Both $\iota_B$ and $f\circ \iota_A$ are $F$-homomorphisms $R\to B^0$, and by \cite[Proposition 3.6.3]{BLM} again applied to $R$ (which is $p$-torsion free and equipped with a Frobenius lift $F_R$) such a map is determined by the reduction $(R/p)_{\rm red}\to B^0/VB^0$. Now, the two reductions agree because $\overline{f}$ is a morphism of $(R/p)_\mathrm{red}$-algebras. Hence $f\iota_A=\iota_B$ (and in particular $d_B\circ f$ kills $\iota_A(R)$)

By the assumption, we have 
\[L=L^0\oplus N=L^0\oplus N_1\oplus N_2\text{ and }
L_B=L_B^0\oplus N=L_B^0\oplus N_1\oplus N_2.\]

By \eqref{eq;SN} and \eqref{eq;def;Flogalgebra}, we have
$F_A(\alpha(l))=\alpha(l)^p$ (resp. $F_B(\alpha_B(l))=\alpha_B(l)^p$) for $l\in L^o\oplus N_1$ (resp. $l\in L_B^o\oplus N_1$).
Hence, by Lemma \ref{lem;Flogalgebra}, the following diagrams are commutative:  
\eq{eq3;omegaUniversal}{
\xymatrix{
L^o\oplus N_1\ar[r]^-\alpha\ar[d]^{\alphab} & A\ar[d]^{\upsilon_A}\\
\redd{A/p}\ar[r]^-{[-]} &W(\redd{A/p})\\}\quad
\xymatrix{
L_B^o\oplus N_1\ar[r]^-{\alpha_B}\ar[d]^{\alphab_B} & B^0\ar[d]^{\upsilon_B}\\
B^0/VB^0\ar[r]^-{[-]} &W(B^0/VB^0)\\} }
Now, we extend $f\colon A\to B^0$ to a morphism $f\colon \omegaSs \sA \to B^*$ of cdga's over $A$. For this, we construct the map $f^1$ which makes the right and outer squares of the following diagram
\eq{eq3.5;omegaUniversal}{
\xymatrix{
L = N\oplus L^o \ar[r]^-{\alpha} \ar@/^18pt/[rr]^-{\delta_A}\ar[d]^{\psi} & A \ar[r]^- d\ar[d]^f
& \omegaS{\sA}^1\ar[d]^{f^1}\\ 
L_B=N \oplus L_B^o \ar[r]^-{\alpha_B} \ar@/_18pt/[rr]_-{\delta_B} & B^0 \ar[r]^-{d_B}& B^1\\}}
 commute (beware that the left square may not commute). 
Define $d_f=d_B\circ f:A \to B^1$ and $\delta_f=\delta_B\circ \psi:L\to B^1$.
It is easy to check that $d_f$ is a derivation over $A$. Thus, it suffices to show that 
$(d_f,\delta_f)$  is a log derivation over $\sR$.
For $n\in N$, we have $\delta_B(n)=0$ and $\psi(n)=n$ by definition so that $\delta_f(n)=0$.
Since $f$ is a map of $R$-algebras, we have $f(\alpha(n))=\alpha_B(n)$ so that $d_f(\alpha(n))=d_B(f(\alpha(n)))=d_B(\alpha_B(n))=0$.
It remains to show 
\eq{eq4;omegaUniversal}{
f(\alpha(l))\cdot \delta_f(l) = d_f(\alpha(l))\overset{\text{def}}{=} d_B(f(\alpha(l)))\qfor l\in L.}
We may assume $l\in L^o$. Write 
$\psi(l)=b\cdot n =b\cdot n_1 \cdot n_2$ with $b\in L_B$, $n\in N$, $n_i\in N_i$ for $i=1,2$. 
Then, we have
\begin{multline*}
\upsilon_B(f(\alpha(l)))\overset{(*1)}{=} W(\fb)(\upsilon_A(\alpha(l))\overset{(*2)}{=}  
W(\fb)([\alphab(l)])=[\fb(\alphab(l))]\overset{(*3)}{=}[\alphab_B(\psi(l))]\\
=[\alphab_B(b, n_1,n_2)]= 
 \begin{cases} 
[\alphab_B(b,n_1)]\overset{(*4)}{=}  \upsilon_B(\alpha_B(\psi(l)))  \;, &  n_2=0,\\ 
[\alphab_B(b,n_1)]\cdot [\alphab_B(n_2)]\overset{(*5)}{=} 0\;, &n_2\not=0.  
\end{cases}\end{multline*} 
where $(*1)$ follows from \eqref{eq2;omegaUniversal}, $(*2)$ from \eqref{eq3;omegaUniversal}, $(*3)$ follows from \eqref{eq1;omegaUniversal},
$(*4)$ follows from \eqref{eq3;omegaUniversal}, and $(*5)$ holds since
$\alphab_B(n_2)=0$ if $n_2\not=0$ thanks to \eqref{eq;SN}. 
Since $v_B$ is an isomorphism,  we get
\[ f(\alpha(l))= \begin{cases} 
  \alpha_B(\psi(l))  \;, &  n_2=0, \\ 
 0\;, & n_2\not=0. 
\end{cases}\]
Hence, if $n_2\not=0$, \eqref{eq4;omegaUniversal} holds as the both terms are $0$.
If $n_2=0$, then
\[ f(\alpha(l))\cdot \delta_f(l)=\alpha_B(\psi(l))\cdot \delta_B(\psi(l))=
d_B(\alpha_B(\psi(l))) =d_B(f(\alpha(l)))=d_f(\alpha(l)).\]
This prove \eqref{eq4;omegaUniversal} so that we get the map $f^1$ as in  \eqref{eq3.5;omegaUniversal}.

Since $\omegaSs {\sA}$ ia free cdga generated by $\omega^1_{\sA/\sR}$ over $A$, we can extend $f^1$ to a cdga morphism $f\colon \omegaSs {\sA} \to B^*$, and  by  construction we have $f^1\circ \delta_A=\delta_B\circ \psi$ (this is the proven commutativity of the outer square in \eqref{eq3.5;omegaUniversal}). It remains to prove that $f$ is compatible with $F$.
Since $f\colon A\to B^0$ is compatible with $F$ by the construction, it suffices to check this on $dx$ and $\delta_A(l)=d\log l$ for $x\in A$ and $l\in L$.
For $dx$, we have
\[ pf(F(dx))=f(pF(dx))=f(dF(x))= d(f(F(x)))=d(F(f(x)))=pFd(f(x))=pFf(dx).\]
Thus, we get $fF(dx))=Ff(dx)$ since $B^*$ is $p$-torsion free.

Finally we check $fF\delta_A=Ff \delta_A$. 
We have
\[ fF\delta_A=f \delta_A=\delta_B\psi=F\delta_B\psi=F f\delta_A,\]
where the first and third equalities follow from $F\delta_A=\delta_A$ and $F\delta_B=\delta_B$ (see Definition \ref{def;logDA}(iii)), and the second and the last equalities follow from the commutativity of the outer square of \eqref{eq3.5;omegaUniversal}.
This proves the desired equality.

Finally, note that injectivity of the reduction map is immediate. Indeed, let $f, g\colon \widehat{\Omega}^*_{\mathcal{A}/\mathcal{R}}\to B^*$ be two morphisms in $\DAlogS$. inducing the same pair $(\overline{f}, \psi)$. In degree zero, $f^0,g^0\colon A\to B^0$ are $F$-homomorphisms with the same reduction, hence by \cite[Proposition 3.6.3]{BLM} again we obtain $f^0=g^0$. In degree 1, log differentials are generated by the elements $dx$ for $(x\in A)$ and $d\log l = \delta_A(l)$, $(l\in L)$. Since $f$ and $g$ are morphisms of cdga's compatible with $d$ and satisfy $f^1\delta_A = \delta_B \psi = g^1 \delta_A$ by condition (ii) of Definition \ref{def;logDA}, we get \[f^1(dx)=d_Bf^0(x)=d_Bg^0(x)=g^1(dx), \quad f^1(\delta_A(l))=\delta_B(\psi(l))=g^1(\delta_A(l)), \]
so that $f^1=g^1$. Since the algebras are generated by degree 1 elements, it is enough to conclude that $f=g$ in all degrees. The two maps then agree also on $p$-adic completion by continuity. 
\end{proof}

\begin{rmk}\label{rmk;WSatomegaIndep}
Notice that the proof of Proposition \ref{prop;omegaUniversal} is delicate.
If the $F$-homomorphism $f$ in \eqref{eq2;omegaUniversal} satisfies,
$\alpha_B\circ \psi= f\circ \alpha$ (in other words, if the left hand square in \eqref{eq3.5;omegaUniversal} is also commutative), the existence of a morphism $f\colon \omegaSs \sA \to B^*$ of cdga's over $A$ extending $f\colon A\to B^0$ is a formal consequence of the universal property of $\omegaSs \sA$. 
In our case, we need to prove \eqref{eq4;omegaUniversal} by an explicit computation.
\end{rmk}
 
\begin{cor}\label{cor;WSatomegaIndep}
Let $(\sA=(A,\alpha\colon L \to A),F)$ and $(\sA'=(A',\alpha'\colon L' \to A'),F')$ be two special $F$-pre-log algebras over $\sR$ such that the pre-log algebras $\sA$ and $\sA'$ are log-smooth of Cartier type over $\sR$ .
Let $\fb\colon A/p \simeq A'/p$ be an isomorphism of $R/p$-algebras and 
$\psi: L\simeq L'$ be an isomorphism of monoids such that the diagram
\[\xymatrix{
L \ar[r]^-\psi\ar[d]^{\alphab} & L'\ar[d]^{\alphab'}\\
A/p \ar[r]^-{\fb} & A'/p\\}.\]
 commutes. Let $\uomega^*_{\sA/\sR}$ and $\uomega^*_{\sA'/\sR}$ be the associated 
objects of $\DAlogSs$  provided by Proposition \ref{prop;omegaDAlog}.
Then, there exists a unique isomorphism in $\DAlogS$:
\[ (\fb,\psi)\colon \WSat( \widehat{{\uomega}}^*_{\sA/\sR}) \simeq \WSat( \widehat{\uomega}^*_{\sA'/\sR}) \]
which makes the following diagram commutative:
\[\xymatrix{
 \WSat( \widehat{\uomega}^*_{\sA/\sR})^0/V \ar[d]^\simeq \ar[r]^-{(\fb,\psi)} & \WSat( \widehat{\uomega}^*_{\sA'/\sR})^0/V\ar[d]^\simeq\\
 A/p\otimes_R \Rperf\ar[r]^-{\fb\otimes \id_{\Rperf}} & A'/p\otimes_R \Rperf\\}\] 
where the vertical isomorphisms come from Lemma \ref{lem;WSatomegaV}(2).
\end{cor}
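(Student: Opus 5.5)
The plan is a Yoneda-type argument: combine Proposition \ref{prop;omegaUniversal} with the left adjointness of $\WSat$ from Lemma \ref{lem;log-saturation-strictification}. Write $B^*=\WSat(\widehat{\uomega}^*_{\sA'/\sR})$. It is special by Lemma \ref{lem;log-saturation-strictification}, and $p$-torsion free and strict, so Proposition \ref{prop;omegaUniversal} applies with target $B^*$. The hypotheses of Proposition \ref{prop;omegaDAlog} hold because, by Remark \ref{rmk:Cartier_smooth_flat}, log smoothness of Cartier type makes $A$ flat over $R$ and the $\omega^i$ locally free, hence $p$-torsion free. Therefore
\[
\Hom_{\DAlogS}(\widehat{\uomega}^*_{\sA/\sR},B^*)\simeq\Hom\bigl((\redd{A/p},L),(\redd{B^0/p},L')\bigr),
\]
and by adjunction the left side equals $\Hom(\WSat(\widehat{\uomega}^*_{\sA/\sR}),B^*)$.

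Next I identify the target. From the proof of Proposition \ref{prop;omegaUniversal}, $\redd{B^0/p}=B^0/VB^0$. By Lemma \ref{lem;WSatomegaV}(2) this is $A'/p\otimes_R\Rperf$, which is already reduced. Indeed, $A'/p$ is log smooth of Cartier type over $R/p$, so its relative Frobenius is injective, and $\Rperf/p$ is perfect; alternatively, reducedness follows from \cite[3.6.1]{BLM}.

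I then take the pair consisting of the composite $\redd{A/p}\to A/p\xrightarrow{\fb}A'/p\to A'/p\otimes_R\Rperf$ and the map $\psi$. The square \eqref{eq1;omegaUniversal} commutes by the hypothesis on $(\fb,\psi)$ together with the fact that the structure maps $L'\to B^0$ are the images of $\alpha'$. This pair yields a unique morphism $\Phi\colon\WSat(\widehat{\uomega}^*_{\sA/\sR})\to B^*$. Applying the same construction to $(\fb^{-1},\psi^{-1})$ gives $\Psi$ in the other direction. The composites $\Psi\Phi$ and $\id$ induce the same pair on reductions, so uniqueness in Proposition \ref{prop;omegaUniversal} (via the adjunction) forces $\Psi\Phi=\id$, and similarly $\Phi\Psi=\id$.

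Commutativity of the square modulo $V$ reduces to a compatibility. The induced map on $B^0/V$ is the $\Rperf$-linear extension of the reduction of the degree-zero part, and the vertical isomorphisms of Lemma \ref{lem;WSatomegaV}(2) come from the natural maps $A\to\WSat(\cdots)^0$. On the image of $A/p$ the map is $\fb$ by construction, and $\Rperf$-linearity holds because $\Phi$ is $R$-linear and compatible with $F$. Uniqueness of $\Phi$ follows because any isomorphism making the square commute induces the same reduction data, so again Proposition \ref{prop;omegaUniversal} applies.

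I expect the main obstacle to be this last identification: the restriction of $\Phi$ modulo $V$ is genuinely $\fb\otimes\id_{\Rperf}$ on all of $A/p\otimes_R\Rperf$, not only on $A/p$. Here I would use that the $\Rperf$-structure on $B^0/V$ comes from $F^{-1}$ on the strict algebra, which $\Phi$ respects.
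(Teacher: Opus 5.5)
Your proposal is correct and follows the same route as the paper: Proposition~\ref{prop;omegaUniversal} combined with the adjunction \eqref{eq:WSat_left_adjoint}, with the reduced special fibre of the target identified through Lemma~\ref{lem;WSatomegaV}(2). The differences are cosmetic. You map directly into the completed $\WSat(\widehat{\uomega}^*_{\sA'/\sR})$, whereas the paper first targets the uncompleted one and then transfers along the equality of reduced special fibres. You also spell out the inverse and the $\Rperf$-linearity check that the paper leaves implicit. One small correction: your composite should start at $A/p$ and factor through $\redd{A/p}$ because the target is reduced, rather than going $\redd{A/p}\to A/p$.
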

\begin{proof}
This follows from Proposition \ref{prop;omegaUniversal} applied to the case
$(\fb,\psi)=(\id_{A/p},\id_L)$ using the isomorphism
\begin{equation} \label{eq:key_comparison_WSat}\Hom_{\DAlogS}(\widehat\uomega^*_{\sA/\sR}, \WSat( \uomega^*_{\sA'/\sR}))\simeq
\Hom_{\DAlogS}(\WSat( \widehat\uomega^*_{\sA/\sR}), \WSat( \uomega^*_{\sA'/\sR}))\end{equation}
given by the fact that $\WSat(-)$ is a left adjoint as in \eqref{eq:WSat_left_adjoint} to the inclusion of the full subcategory of strict log Dieudonn\'e algebras. 
Moreover, we have isomorphisms
\[ (\WSat(\uomega^*_{\sA'/\sR})^0/p)_{\rm red} \simeq (A'/p\otimes_R \Rperf)_{\red} \simeq (\WSat(\widehat{\uomega}^*_{\sA'/\sR})^0/p)_{\rm red}\] 
by Lemma \ref{lem;WSatomegaV}(2) so that we can use again Proposition \ref{prop;omegaUniversal}  to add a $p$-adic completion to the right hand term of the Hom groups in \eqref{eq:key_comparison_WSat}, and get the second part of the claim. 
\end{proof}

\begin{rmk}
Consider the case $A=A'$ and $L=L'$, but not necessarily $\alpha=\alpha'$ nor $F=F'$.  The following diagram of the underlying chain complexes may not commute
\[
\begin{tikzcd}
    \widehat{\uomega}^*_{\sA/\sR} \ar[d, "\rho"] \ar[rrd, "\rho'"] \\
    \WSat(\widehat{\uomega}^*_{\sA/\sR}) \ar[rr, "{(\id_{A/p},\id_L)}"'] && \WSat( \widehat{\uomega}^*_{\sA'/\sR})
\end{tikzcd}
\]
where $\rho$ and $\rho'$ are the natural map of underlying chain complexes induced by the identity on $\WSat(-)$.
\end{rmk}

\begin{thm}\label{thm;mathcaWomega}
Put $(\sR/p)_{\red}:=(R/\sqrt{pR},N)$
 and let $\DA_R^{\str}$ be the category of strict Dieudonn\'e algebras over $R$ from \cite{BLM}.
There is a functor
\[
\mathcal W\uomega^*_{-/\sR}\colon
\mathrm{lPoly}_{(\sR/p)_{\red}}
\longrightarrow
\DA_R^{\str}
\]
characterized as follows. For
$\sP_0=(P_0,L_P)\in\mathrm{lPoly}_{(\sR/p)_{\red}}$, let $\sP$ be its
standard $p$-complete polynomial $F$-lift over $\sR$, and put
\[
\mathbf W_{\sR}(\sP_0)
:=
\WSat\bigl(\widehat{\uomega}^*_{\sP/\sR}\bigr)
\in(\DAlogSs)^{\str}.
\]
Then $\mathcal W\uomega^*_{\sP_0/\sR}$ is the underlying strict
Dieudonn\'e algebra of $\mathbf W_{\sR}(\sP_0)$.

Let
\[
q_{\sP}\colon
\widehat{\uomega}^*_{\sP/\sR}
\longrightarrow
\mathbf W_{\sR}(\sP_0)
\]
be the  unit map, and let
\[
e_{\sP_0}\colon
\sP_0
\longrightarrow
\left(
 (\mathbf W_{\sR}(\sP_0)^0/p)_{\red},L_P
\right)
\]
be its reduction on degree zero. For a morphism
$u\colon\sP_0\to\sQ_0$, the morphism
$\mathbf W_{\sR}(u):\mathbf W_{\sR}(\sP_0)\to \mathbf W_{\sR}(\sQ_0) $ 
is the unique strict logarithmic Dieudonn\'e
algebra morphism satisfying
\begin{equation}\label{eq;esP0}
(\mathbf W_{\sR}(u)^0/p)_{\red}\circ e_{\sP_0}
=
e_{\sQ_0}\circ u.
\end{equation}
Precomposing with reduction gives, in particular, a functor on
$\mathrm{lPoly}_{\sR/p}$.
\end{thm}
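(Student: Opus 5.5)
The functor is built from Proposition~\ref{prop;omegaUniversal} combined with the adjunction of Lemma~\ref{lem;log-saturation-strictification}. Functoriality will then come from the uniqueness part of the universal property. Since all Hom-sets are sets in an ordinary $1$-category, no higher coherences arise.

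\emph{Objects.} Given $\sP_0=(P_0,L_P)$, a free pre-log algebra over $(\sR/p)_{\red}$ with $P_0=(R/\sqrt{pR})[X_i,Y_j]$ and $L_P=\N^m\oplus N$, let $\sP$ be the $p$-completed polynomial algebra $R\langle X_i,Y_j\rangle$. It carries the pre-log structure $Y_j\mapsto Y_j$ on $\N^m$ and $\alpha_R$ on $N$, and the Frobenius lift $X_i\mapsto X_i^p$, $Y_j\mapsto Y_j^p$ extending $F_R$. By Remark~\ref{rmk;Flogalgebra}(1) this is a special $F$-pre-log algebra. It is $p$-torsion free, log smooth, and its log differentials are free, hence $p$-torsion free. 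So Proposition~\ref{prop;omegaDAlog} gives $\widehat{\uomega}^*_{\sP/\sR}\in\DAlogSs$. Applying $\WSat$ then gives $\mathbf W_{\sR}(\sP_0)$ in $(\DAlogSs)^{\str}$ by Lemma~\ref{lem;log-saturation-strictification}, and we define $\mathcal W\uomega^*_{\sP_0/\sR}$ to be its underlying strict Dieudonn\'e algebra. The map $e_{\sP_0}$ is the reduction of $q_{\sP}^0$. Since $(\sP/p)_{\red}=\sP_0$, this is a morphism of pre-log algebras over $(\sR/p)_{\red}$ with the identity on $L_P$.

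\emph{Morphisms.} Let $u=(\bar u,\psi_u)\colon\sP_0\to\sQ_0$. Composing with $e_{\sQ_0}$ gives a pair
\[
\bigl(e_{\sQ_0}\circ\bar u,\ \psi_u\bigr)\colon (P_0,L_P)\to\bigl((\mathbf W_{\sR}(\sQ_0)^0/p)_{\red},L_Q\bigr).
\]
The square \eqref{eq1;omegaUniversal} commutes because $u$ is a pre-log map and $e_{\sQ_0}$ is compatible with the pre-log structures. The target $\mathbf W_{\sR}(\sQ_0)$ is special with strict underlying Dieudonn\'e algebra. Proposition~\ref{prop;omegaUniversal}, applied to $\sA=\sP$ and $B^*=\mathbf W_{\sR}(\sQ_0)$, therefore produces a unique morphism $\widehat{\uomega}^*_{\sP/\sR}\to\mathbf W_{\sR}(\sQ_0)$ in $\DAlogS$ with this reduction. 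By the adjunction \eqref{eq:WSat_left_adjoint}, it factors uniquely through $q_{\sP}$, and this factorization is $\mathbf W_{\sR}(u)$.

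To see that $\mathbf W_{\sR}(u)$ is \emph{characterized} by \eqref{eq;esP0}, take any strict map $g\colon\mathbf W_{\sR}(\sP_0)\to\mathbf W_{\sR}(\sQ_0)$ satisfying \eqref{eq;esP0}. Then $g\circ q_{\sP}$ has reduction $e_{\sQ_0}\circ u$. The injectivity half of Proposition~\ref{prop;omegaUniversal} gives $g\circ q_{\sP}=\mathbf W_{\sR}(u)\circ q_{\sP}$, and the adjunction then gives $g=\mathbf W_{\sR}(u)$.

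\emph{Functoriality.} This follows from the same uniqueness. The identity satisfies \eqref{eq;esP0} for $u=\id$. For a composite, $\mathbf W_{\sR}(v)\circ\mathbf W_{\sR}(u)$ satisfies \eqref{eq;esP0} for $v\circ u$, by pasting the two squares after reduction modulo $p$, which is functorial. Forgetting the log data gives a functor to $\DA_R^{\str}$. Precomposing with $\sP\mapsto(\sP/p)_{\red}$, which sends log polynomial algebras over $\sR/p$ to those over $(\sR/p)_{\red}$, gives the final assertion.

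\emph{Main obstacle.} The delicate point is the monoid bookkeeping in the characterization. The pair $(\fb,\psi)$ must be a map over $N$, and the morphisms in $\DAlogS$ only remember $\psi$ through condition~(i) of Definition~\ref{def;logDA}. I must also check that $\mathbf W_{\sR}(u)$ carries the monoid map $\psi_u$. This holds by construction, but the uniqueness of the monoid component should be built into \eqref{eq;esP0}, read as an equality of pre-log maps rather than just of rings. Beyond that, the work is the verification of the hypotheses of Proposition~\ref{prop;omegaUniversal} for the strict target, which is immediate from Lemma~\ref{lem;log-saturation-strictification}.
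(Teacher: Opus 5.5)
Your proposal is correct and follows essentially the paper's own argument: you apply Proposition~\ref{prop;omegaUniversal} to $e_{\sQ_0}\circ u$, extend the resulting map through the adjunction \eqref{eq:WSat_left_adjoint}, and get the identity and composition laws from uniqueness. You also make explicit what the paper leaves implicit in ``follow from uniqueness'': that $\mathbf W_{\sR}(u)$ is characterized by \eqref{eq;esP0}, via the injectivity half of Proposition~\ref{prop;omegaUniversal} applied to $g\circ q_{\sP}$, with \eqref{eq;esP0} read as an equality of pre-log maps so that the monoid component is also fixed.
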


\begin{proof}Proposition \ref{prop;omegaUniversal} 
     immediately implies that the functor $\sP \mapsto \WSat(\widehat{\uomega}^*_{\sP/\sR})$ on $\mathrm{lPoly}_{\sR}^F$ only depends functorially on the reduced part of the reduction modulo $p$ of $\sP$, where $\mathrm{lPoly}_{\sR}^F$  is the category of polynomial $F$-pre-log algebras over $\sR$  as in Remark \ref{rmk;Flogalgebra}(1). More precisely,
for a morphism $u\colon\sP_0\to\sQ_0$, apply
Proposition~\ref{prop;omegaUniversal} to the map of reduced pre-log
algebras $e_{\sQ_0}\circ u$. It gives a unique morphism
\[
\widehat{\uomega}^*_{\sP/\sR}
\longrightarrow
\mathbf W_{\sR}(\sQ_0).
\]
The adjunction \eqref{eq:WSat_left_adjoint} extends it uniquely to a
strict logarithmic Dieudonn\'e morphism
$\mathbf W_{\sR}(\sP_0)\to\mathbf W_{\sR}(\sQ_0)$. The identity and
composition laws follow from uniqueness. This also shows directly that
the construction is independent of the chosen standard presentation of
the polynomial lift.
\end{proof}
\begin{rmk}\label{rmk;multiplicativity} Let $\mathcal{D}(R)^{\wedge}_p$ be the $p$-complete $\infty$-category of $R$-modules and  let
$\mathrm{CAlg}(-)$ denote the $\infty$-category of commutative algebra objects with respect to the standard symmetric monoidal structure on  $\mathcal{D}(R)^{\wedge}_p$.
    Let
\[
U_{\rm alg}\colon
\DA_R^{\str}
\longrightarrow
\CAlg\bigl(\mathcal D(R)^\wedge_p\bigr)
\]
be the functor which forgets the Frobenius and regards the underlying
commutative differential graded $R$-algebra as a commutative algebra
object of the derived $p$-complete category.
The functor of
Theorem~\ref{thm;mathcaWomega} canonically lifts to a functor
\[
U_{\rm alg}\circ\mathcal W\uomega^*_{-/\sR}\colon
\mathrm{lPoly}_{\sR/p}
\longrightarrow
\CAlg\bigl(\mathcal D(R)^\wedge_p\bigr).
\]
Since $\CAlg\bigl(\mathcal D(R)^\wedge_p\bigr)$ admits sifted colimits, and the forgetful functor to $p$-complete $R$-modules preserves them,  sifted left Kan extension gives a canonical functor
\begin{equation}\label{eq;CAlg-animation-Womega}
L\mathcal W\uomega^*_{-/\sR}\colon
\Ani(\PreLog_{\sR/p})
\longrightarrow
\CAlg\bigl(\mathcal D(R)^\wedge_p\bigr).
\end{equation}
The underlying $\mathcal D(R)^\wedge_p$-valued functor is the
animation of $U\circ\mathcal W\uomega^*_{-/\sR}$, 
where $U$ is the composite of $U_{\rm alg}$ and the obvious forgetful functor $
\CAlg\bigl(\mathcal D(R)^\wedge_p\bigr) \to \mathcal D(R)^\wedge_p$. 
Indeed, a strict Dieudonn\'e algebra over $R$ is, in particular, a
commutative differential graded $R$-algebra, and a morphism of strict
Dieudonn\'e algebras preserves the multiplication and the unit. Its
underlying complex is derived $p$-complete. Hence the functor
$\mathcal W\uomega^*_{-/\sR}$ canonically factors through
$\CAlg(\mathcal D(R)^\wedge_p)$.  See \cite[Corollary~7.6.2, Proposition~7.6.3 and
Examples~7.6.5, 7.6.7]{BLM}.
\end{rmk}

\subsection{Change of log structures of the base}
Let $(\sA=(A,\alpha\colon L \to A),F)$ be a special $F$-pre-log algebras over $\sR$.
Proposition \ref{prop;omegaUniversal} shows that maps 
from $\widehat{\omega}^*_{\sA/\sR} $ to special log Dieudonné algebras are appropriately detected  on the reduced special fibre.
This allows us in particular to change the pre-log structure $\alpha$ on $A$ as follows. 

Consider another pre-log structure $\hal_R\colon N\to R$ with the same monoid $N$ satisfying \eqref{eq;SN} and put $\uhS=(R,\hal_R)$.
We assume 
\eq{eq:talpha}{
\alpha_R\equiv \hal_R \mod \sqrt{pR}.}
We also consider the log algebra $\utS=(R,\triv)$ with the trivial log structre.
Let $(\sA=(A,\alpha\colon L=L^o\oplus N \to A),F)$ be a $p$-torsion free special $F$-pre-log algebra over $\sR$ and $\iota\colon R\to A$ be the structure map.
Then,
\begin{align}\label{eq:special_lift} 
\widetilde{\sA}=(A,\hal\colon L\to A,F)\qaq
 \sA^{\triv}=(A,\alpha_{|L^o}\colon  {L^o } \to A,F) \end{align}
are $p$-torsion free special $F$-pre-log algebras over $\uhS$ and $\utS$ respectively, where
$\hal_{|L^o}=\alpha_{|L^o}$ and $\hal_{|N}=\iota\circ \hal_R$.
Note that the relative log de Rham complexes of these log algebras are canonically identified
\eq{eq;omegaSSS}{
\omega^*_{\sA/\sR}\simeq \omega^*_{\widetilde{\sA}/\uhS}\simeq 
\omega^*_{\sA^{\triv}/\utS}}
by the natural morphisms induced by the identity of $A$. Indeed, the pair consisting of the differential $\tilde{d}\colon A\to  \omega^1_{\widetilde{\sA}/\uhS}$ together with 
$\tilde{\delta}\colon L\to \omega^1_{\widetilde{\sA}/\uhS}$  
gives in particular a log derivation of $\sA$ over $\sR$, thus induces a unique  map $\omega^*_{\sA/\sR}\to \omega^*_{\uhA/\uhS}$. Exchanging the roles of $\widetilde{\sA}/\uhS$ and $\sA/\sR$, we get the map in the inverse map in the opposite direction. Similar argument works for $\sA^{\triv}/\utS$. We stress that \eqref{eq;omegaSSS} holds only at the level of underlying cdga's with Frobenius (that is, Dieudonné algebras), not as log-objects (as log Dieudonné algebras, they live in different categories). 

\begin{const}\label{hat_constr} Let $\sB^*=(B^*,d,F_B,\alpha_B\colon L_B=L_B^o\oplus N \to B^0,\delta_B\colon {L_B}\to B^1)$
be a $p$-torsion free special log Dieudonn\'e algebra over $\sR$ as defined in Definition \ref{def;logDA} and $\iota_B:R\to B^0$ be the structure map. Then,
\[\uhB^*=(B^*,d,F_B,\hal_B\colon L_B\to B^0,\delta_B)
\;\text{ in } \DA^{\log}_{\widetilde{\sR}} \]
and
\[ \utBs=(B^*,d,F_B,(\alpha_B)_{|L_B^o}\colon L_B^o\to B^0,\delta_B)\;\text{ in } \DA^{\log}_{\sR^{\triv}} \]
are $p$-torsion-free special log Dieudonn\'e algebras over
$\uhS$ and $\utS$, respectively, where
$(\hal_B)_{|L_B^o}=(\alpha_B)_{|L^o}$ and $(\hal_B)_{|N}=\iota_B\circ \hal_R$.
\end{const}

Let $\fb\colon   \redd {A/p}\to\redd{B^0/p}$ and $\psi:L\to L_B$ be a pair as in Proposition \ref{prop;omegaUniversal} such that the diagram \eqref{eq1;omegaUniversal}
commutes. Note that the commutativity also holds for 
$(\widetilde{\alpha},\widetilde{\alpha}_B)$ and $(\alpha_{|L^o},(\alpha_B)_{|L^o})$ thanks to \eqref{eq:talpha}.
Thus, assuming that the underlying Dieudonn\'e complex $(B^*,d,F_B)$ of $\sB^*$ is strict, the proposition implies that we have the induced maps
\begin{align*}
    f\colon  \uomegaSshat {\sA} \to \sB^*\; &\text{ in }\DAlogS,\\ 
    \hf\colon  \widehat{\uomega}^*_{\uhA/\uhS}\to \uhB^*\; &\text{ in }\DAlog_{\uhS},\\
    \tf\colon \widehat{\uomega}^*_{\utA/\utS}\to \utBs\; &\text{ in }\DAlog_{\utS}.
\end{align*}
 Finally, let $\uf\colon  \omegaSshat {\sA} \to B^*$, $\uhf\colon  \widehat{\omega}^*_{\uhA/\uhS}\to B^*$ and
$\utf\colon  \widehat{\omega}^*_{\utA/\utS}\to B^*$ be the induced maps in $\DA_R$ of the underlying Dieudonné algebras over $R$
(noting that the underlying Dieudonné algebra of $\sB^*$, of $\uhB^*$ and of $\utBs$ is always $(B^*,d,F_B)$).  

 \begin{lem}\label{lem;IndepBase}Following the notation above, let $\Gamma^*_{A/R}$ be the common Dieudonné algebra over $R$ underlying the log Dieudonné algebras along the identification \eqref{eq;omegaSSS}. Then, for any $p$-torsion free special log Dieudonné algebra $\sB^*$ with the underlying Dieudonné algebra $(B^*,d,F_B)$, we have an identification
$\uf=\uhf=\utf$ as elements of $\Hom_{\DA_R} ( \Gamma^*_{A/R},  B^*) $.
\end{lem}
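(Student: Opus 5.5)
The strategy is to show that all three maps are determined by the same data in degree zero and on generators of the degree one part. Then the uniqueness argument at the end of the proof of Proposition \ref{prop;omegaUniversal} forces them to coincide as maps of Dieudonné algebras $\Gamma^*_{A/R}\to B^*$. The log structures enter the construction of $f,\hf,\tf$ only through the verification that the degree-one map exists. Once the maps exist, they are pinned down by their values on $A$, on the elements $dx$, and on the elements $d\log l$, and these values can be compared directly.

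First I compare the degree-zero components. Each of $\uf^0,\uhf^0,\utf^0\colon A\to B^0$ is an $F$-homomorphism over $R$, since the Frobenius $F$ on $A$ and the structure map $R\to A$ are the same in all three settings. Each reduces to the same map $\fb\colon\redd{A/p}\to\redd{B^0/p}=B^0/VB^0$, because all three are produced by Proposition \ref{prop;omegaUniversal} from the same $\fb$. By \cite[Proposition 3.6.3]{BLM}, applied through the diagram \eqref{eq2;omegaUniversal}, an $F$-homomorphism $A\to B^0$ is determined by its reduction, so $\uf^0=\uhf^0=\utf^0$. Compatibility with $d$ then gives $\uf^1(dx)=d_B\uf^0(x)$, and likewise for $\hf$ and $\tf$, so the three maps agree on every $dx$ with $x\in A$.

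Next I compare the values on the logarithmic generators. Under \eqref{eq;omegaSSS}, the element $d\log l$ for $l\in L$ means the same element of $\Gamma^1_{A/R}$ in all three settings. For $l\in N$ this element is $0$ in each setting, since it vanishes relative to the base. For $l\in L^o$, the monoid and the map $\delta_A$ restricted to $L^o$ are common to $\sA$, $\uhA$ and $\utA$. In each case condition (ii) of the morphism definition gives $g^1(\delta_A(l))=\delta_B(\psi(l))$, and $\delta_B$ is the same for $\sB^*$, $\uhB^*$ and $\utBs$ (Construction \ref{hat_constr}). I need to check that the $\psi$ used for $\tf$ is the restriction of $\psi$ to $L^o$, landing in $L_B^o$ after projection. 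Since $\delta_B$ kills $N$, projecting $\psi(l)$ away from its $N$-component does not change $\delta_B(\psi(l))$. Hence the three maps agree on every $d\log l$.

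Since $\Gamma^*_{A/R}$ is generated as a cdga over $A$ by the elements $dx$ and $d\log l$ (it is the completed log de Rham complex), the three maps coincide, with continuity handling the $p$-adic completion. The main obstacle is identifying the elements $d\log l$ correctly across the identification \eqref{eq;omegaSSS}, especially for the trivial structure $\utA$, where $N$ is not part of the monoid. I plan to handle this by verifying directly from the construction of \eqref{eq;omegaSSS} through log derivations that $\tilde\delta|_{L^o}=\delta|_{L^o}$ and that $\delta(N)=0$.
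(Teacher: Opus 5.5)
Your proposal is correct and is the same argument the paper has in mind when it says the lemma ``follows immediately from the construction of $f$, $\hf$ and $\tf$'': all three maps share the same degree-zero component (the unique $F$-homomorphism over $R$ lifting $\fb$), and each is defined in degree one by the same log derivation $(d_B\circ f,\delta_B\circ\psi)$, which you check on the generators $dx$ and $d\log l$. Your extra remark that $\delta_B$ kills $N$, so that replacing $\psi(l)$ by its $L_B^o$-component does not change $\delta_B(\psi(l))$, correctly covers the one point the paper leaves implicit for the trivial structure.
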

\begin{proof}
This follows immediately from the construction of $f$, $\hf$ and $\tf$.
\end{proof}

\begin{cor}\label{main_cor_WOmega_agrees}
Let $\sR=(R,\alpha_R\colon N\to R)$ and
$\widetilde{\sR}=(R,\widetilde{\alpha}_R\colon N\to R)$ be two
pre-log rings satisfying \eqref{eq;SN} and \eqref{eq:talpha}. Thus $(\sR/p)_{\rm red}
=
(\widetilde{\sR}/p)_{\rm red}$. 
Then there is a canonical natural isomorphism
\begin{equation}\label{eq:kappa-polynomial-base-log}
\varkappa\colon
\mathcal W\uomega^*_{-/\sR}
\xrightarrow{\ \sim\ }
\mathcal W\uomega^*_{-/\widetilde{\sR}}, \qquad \text{in}\qquad \Fun\left(
\mathrm{lPoly}_{(\sR/p)_{\rm red}},
\DA_R^{\str}
\right)
\end{equation}
More precisely, let
$\sP_0\in\mathrm{lPoly}_{(\sR/p)_{\rm red}}$, let $\sP$ be its
standard $p$-complete polynomial $F$-lift over $\sR$, and let
$\widetilde{\sP}$ be the corresponding lift over
$\widetilde{\sR}$. Write
\[
j_{\sP}\colon
\widehat\omega^*_{\sP/\sR}
\xrightarrow{\ \sim\ }
\widehat\omega^*_{\widetilde{\sP}/\widetilde{\sR}}
\]
for the completed version of the canonical identification
\eqref{eq;omegaSSS}. If
\[
q_{\sP}\colon
\widehat{\uomega}^*_{\sP/\sR}
\longrightarrow
\mathbf W_{\sR}(\sP_0),
\qquad
q_{\widetilde{\sP}}\colon
\widehat{\uomega}^*_{\widetilde{\sP}/\widetilde{\sR}}
\longrightarrow
\mathbf W_{\widetilde{\sR}}(\sP_0)
\]
are the unit maps, then
\begin{equation}\label{eq:kappa-unit-normalization}
\varkappa(\sP_0)\circ q_{\sP}
=
q_{\widetilde{\sP}}\circ j_{\sP}
\end{equation}
after forgetting the log structures.
\end{cor}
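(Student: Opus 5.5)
We construct $\varkappa(\sP_0)$ objectwise via the universal property of Proposition~\ref{prop;omegaUniversal}, combined with the change-of-base mechanism of Construction~\ref{hat_constr} and Lemma~\ref{lem;IndepBase}. Naturality and invertibility will then follow from uniqueness statements, and no higher coherences arise since $\DA_R^{\str}$ is a $1$-category.

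\textbf{Construction.} Fix $\sP_0\in\mathrm{lPoly}_{(\sR/p)_{\rm red}}$ with lifts $\sP$ over $\sR$ and $\widetilde{\sP}$ over $\widetilde{\sR}$. These have the same underlying ring $P$ with its Frobenius lift, the same monoid $L_P=L^o\oplus N$, and structure maps that agree on $L^o$ and differ on $N$ only through $\alpha_R$ versus $\widetilde\alpha_R$. Put $\sB^*:=\mathbf W_{\sR}(\sP_0)$; this is a strict, $p$-torsion free special log Dieudonn\'e algebra over $\sR$. Construction~\ref{hat_constr} produces $\uhB^*$ over $\widetilde{\sR}$ with the same underlying strict Dieudonn\'e algebra. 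The reduction $e_{\sP_0}$ still gives a commuting square \eqref{eq1;omegaUniversal} for $(\widetilde\alpha,\widetilde\alpha_B)$, because \eqref{eq:talpha} says the two structure maps agree modulo $\sqrt{pR}$. Hence Proposition~\ref{prop;omegaUniversal} yields a unique $\hf\colon\widehat{\uomega}^*_{\widetilde{\sP}/\widetilde{\sR}}\to\uhB^*$ lifting $e_{\sP_0}$. By Lemma~\ref{lem;IndepBase}, its underlying Dieudonn\'e map agrees, under $j_{\sP}$, with the underlying map of $q_{\sP}$.

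The adjunction \eqref{eq:WSat_left_adjoint}, over $\widetilde{\sR}$, extends $\hf$ to a morphism $\mathbf W_{\widetilde{\sR}}(\sP_0)\to\uhB^*$. Forgetting log structures gives $\lambda(\sP_0)\colon\mathcal W\uomega^*_{\sP_0/\widetilde{\sR}}\to\mathcal W\uomega^*_{\sP_0/\sR}$ with $\lambda\circ q_{\widetilde{\sP}}=q_{\sP}\circ j_{\sP}^{-1}$. Running the same argument with the roles of $\sR$ and $\widetilde{\sR}$ exchanged gives $\varkappa(\sP_0)$ in the opposite direction. By construction it satisfies \eqref{eq:kappa-unit-normalization}, which is the required normalization.

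\textbf{Invertibility and naturality.} The composites $\lambda\varkappa$ and $\varkappa\lambda$ are endomorphisms of strict log Dieudonn\'e algebras. Their precompositions with the unit maps induce the identity on the reduced degree-zero part with monoid map $\id_{L_P}$. By the uniqueness in Proposition~\ref{prop;omegaUniversal} together with the adjunction, both composites are the identity. For $u\colon\sP_0\to\sQ_0$, the two composites $\varkappa(\sQ_0)\circ\mathbf W_{\sR}(u)$ and $\mathbf W_{\widetilde{\sR}}(u)\circ\varkappa(\sP_0)$ are compared the same way. Precomposed with $q_{\sP}$ and reinterpreted in $\DAlog_{\widetilde{\sR}}$ via the tilde construction, both reduce to $e_{\sQ_0}\circ u$ by \eqref{eq;esP0}, so uniqueness forces them to coincide. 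Independence of choices follows from the same uniqueness.

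\textbf{Main obstacle.} The delicate point is that $\varkappa(\sP_0)$ lives only in $\DA_R^{\str}$, while uniqueness is available only for log morphisms. One must therefore check that the underlying maps being compared are genuinely log morphisms between the tilde-modified objects of Construction~\ref{hat_constr}. In particular, one must verify that $\widetilde{\mathbf W_{\sR}(\sP_0)}$ is strict with the right reduction: its underlying Dieudonn\'e algebra is unchanged, and its reduced degree-zero part is unchanged because of \eqref{eq:talpha}. One must also check that the log structure on $\mathbf W_{\widetilde{\sR}}(\sP_0)$ is carried to the tilde structure compatibly with $\delta$. This $\delta$-compatibility is where the explicit computation \eqref{eq4;omegaUniversal} is used again, and I expect it to be the step requiring care.
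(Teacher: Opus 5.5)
Your argument is correct, but the objectwise construction differs from the paper's. You build $\lambda(\sP_0)$ and $\varkappa(\sP_0)$ separately. Each comes from applying Proposition~\ref{prop;omegaUniversal} (via Lemma~\ref{lem;IndepBase}) to a tilde-modified strict target, followed by the adjunction \eqref{eq:WSat_left_adjoint}. You then get invertibility from uniqueness, which requires composing log morphisms across the two bases; this is harmless because the tilde operation is involutive and is the identity on morphisms.

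The paper avoids Proposition~\ref{prop;omegaUniversal} at this stage. It observes that \eqref{eq;omegaSSS} is already an isomorphism $\widetilde{\widehat{\uomega}^*_{\sP/\sR}}\cong\widehat{\uomega}^*_{\widetilde{\sP}/\widetilde{\sR}}$ of log Dieudonn\'e algebras over $\widetilde{\sR}$. It also uses that $\WSat$ commutes with the tilde operation, since it keeps the monoid and composes $\alpha,\delta$ with an $R$-algebra map. So $\varkappa(\sP_0)$ is simply $\WSat$ of that isomorphism. Invertibility and \eqref{eq:kappa-unit-normalization} then follow from functoriality and from naturality of the unit.

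Your route costs an extra uniqueness argument, but it makes explicit that $\varkappa(\sP_0)\circ q_{\sP}$ is the morphism classified by $e_{\sP_0}$. That is exactly what the naturality step in $u$ needs, and there your argument coincides with the paper's.

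The ``main obstacle'' you flag is not a real one. The tilde operation leaves $\delta$ untouched and does not change $\alphab$ on $(-/p)_{\rm red}$, by \eqref{eq:talpha}. Moreover, $j_{\sP}$ sends $d\log l$ to $d\log l$. Hence $\delta$-compatibility is formal. The computation \eqref{eq4;omegaUniversal} enters only inside the proof of Proposition~\ref{prop;omegaUniversal}, which you use as a black box.
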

\begin{proof}
Let $\sP_0$ be a polynomial object as in the statement. By
Construction~\ref{hat_constr}, we may regard
$\mathbf W_{\sR}(\sP_0)$ as a strict logarithmic Dieudonn\'e algebra
over $\widetilde{\sR}$ by changing only the value of the base summand
$N$. We denote this object by
$\widetilde{\mathbf W_{\sR}(\sP_0)}$.
The identification \eqref{eq;omegaSSS} gives an isomorphism of
logarithmic Dieudonn\'e algebras over $\widetilde{\sR}$
\[
\widetilde{
 \widehat{\uomega}^*_{\sP/\sR}
}
\xrightarrow{\ \sim\ }
\widehat{\uomega}^*_{\widetilde{\sP}/\widetilde{\sR}}.
\]
Since saturation and strictification leave the monoid unchanged and
are functorial on the underlying Dieudonn\'e algebra, this induces an
isomorphism
\[
\widetilde{\mathbf W_{\sR}(\sP_0)}
\xrightarrow{\ \sim\ }
\mathbf W_{\widetilde{\sR}}(\sP_0)
\]
of strict logarithmic Dieudonn\'e algebras over
$\widetilde{\sR}$. We define $\varkappa(\sP_0)$ to be its underlying
isomorphism in $\DA_R^{\str}$. Naturality of the unit of the adjunction
\eqref{eq:WSat_left_adjoint} gives
\eqref{eq:kappa-unit-normalization}.
It remains to check functoriality. Let
\[
u\colon\sP_0\longrightarrow\sQ_0
\]
be a morphism of polynomial objects. Consider the two composites
\[
\varkappa(\sQ_0)\circ\mathbf W_{\sR}(u),
\;
\mathbf W_{\widetilde{\sR}}(u)\circ\varkappa(\sP_0)\; :\;
 \mathbf W_{\sR}(\sP_0) \to \mathbf W_{\widetilde{\sR}}(\sQ_0) .
\]
By the adjunction \eqref{eq:WSat_left_adjoint}, it is enough to compare
their precompositions with 
 
$q_{\sP} :\widehat{\uomega}^*_{\sP/\sR} \to\mathbf W_{\sR}(\sP_0)  $.
By Theorem~\ref{thm;mathcaWomega}, the latter morphisms
are characterized by their respective reduced morphisms
$\sP_0\to (\mathbf W_{\sR}(\sQ_0)^0/p)_{\red}$.
By Construction~\ref{hat_constr}, we have a canonical identification
$(\mathbf W_{\sR}(\sQ_0)^0/p)_{\red} =(\mathbf W_{\widetilde{\sR}}(\sQ_0)^0/p)_{\red}$ through which the reduced morphisms are identified with  
$e_{\sQ_0}\circ u.$ and $(\mathbf W_{\sR}(u)^0/p)_{\red}\circ e_{\sP_0}$ respectively.
By \eqref{eq;esP0}, these are equal so that the two displayed composites agree,
which proves naturality. 
\end{proof}
\begin{rmk}
    More generally, for three pre-log structures
\[
\sR_i=(R,\alpha_i\colon N\to R),
\qquad i=0,1,2,
\]
with the same reduced special fibre, the resulting comparisons satisfy
the strict identities
\[
\varkappa_{ii}=\id,
\qquad
\varkappa_{jk}\circ\varkappa_{ij}
=
\varkappa_{ik}
\]
in the $1$-category $\DA_R^{\str}$. Indeed, for three base log structures, the canonical identifications of the
completed relative log de Rham complexes satisfy
\[
j_{ii,\sP}=\id,
\qquad
j_{jk,\sP}\circ j_{ij,\sP}=j_{ik,\sP}.
\]
The corresponding identities for the $\varkappa_{ij}$ follow from
\eqref{eq:kappa-unit-normalization} and the uniqueness in the
adjunction \eqref{eq:WSat_left_adjoint}.
\end{rmk}

 \begin{rmk}\label{rmk;main_cor_WOmega_agrees}
 In view of Remark \ref{rmk;multiplicativity}, the animation of the functor $\mathcal{W}\uomega_{-/\sR}^*$ from Theorem \ref{thm;mathcaWomega} induces a functor 
\begin{equation}\label{eq;LWOmegaAni}
L\mathcal{W}\uomega^*_{-/\sR}\colon \mathrm{Ani} (\mathrm{PreLog}_{\sR/p})\to 
\mathrm{CAlg}(\mathcal{D}(R)^{\wedge}_p).
\end{equation}

In view of \cite[7.3.4 and 7.4.7]{BLM}, Corollary \ref{main_cor_WOmega_agrees} implies that there is a natural equivalence
\begin{equation}\label{eq;LWOmegaAnicomparison} 
\varkappa: L\mathcal{W}\uomega^*_{-/\sR} \simeq L\mathcal{W}\uomega^*_{-/\widetilde{\sR}}
\;\text{ in } \Fun(\mathrm{Ani} (\mathrm{PreLog}_{\sR/p}),\mathrm{CAlg}(\mathcal{D}(R)^{\wedge}_p)).
\end{equation}
 Let $h: R\to R'$ be a ring map with a Frobenius lift $F_{R'}$ on $R'$ compatible with $F_R$. Let $\sR'=(R',\alpha_{R'}: N\to R')$ and 
     $\widetilde{\sR}'=(R',\widetilde{\alpha}_{R'}:N\to R')$ be the pre-log rings with the induced pre-log structure from those of $\sR$ and $\widetilde{\sR}$ respectively.
       Then, $\alpha_{R'}$ and $\widetilde{\alpha}_{R'}$ satisfy \eqref{eq;SN} so that we have a natural equivalence
    \[  \varkappa': L\mathcal{W}\uomega^*_{-/\sR'} \simeq L\mathcal{W}\uomega^*_{-/\widetilde{\sR}'}
\;\text{ in } \Fun(\mathrm{Ani} (\mathrm{PreLog}_{\sR'/p}),\mathrm{CAlg}(\mathcal{D}(R)^{\wedge}_p))
       \]
       such that $h^*(\varkappa)=h_*(\varkappa')$, where        
  \[ h^*:  \Fun(\mathrm{Ani} (\mathrm{PreLog}_{\sR/p}),\mathrm{CAlg}(\mathcal{D}(R)^{\wedge}_p))\to
  \Fun(\mathrm{Ani} (\mathrm{PreLog}_{\sR/p}),\mathrm{CAlg}(\mathcal{D}(R)^{\wedge}_p)),\]
  \[ h_*:  \Fun(\mathrm{Ani} (\mathrm{PreLog}_{\sR'/p}),\mathrm{CAlg}(\mathcal{D}(R)^{\wedge}_p))\to
  \Fun(\mathrm{Ani} (\mathrm{PreLog}_{\sR/p}),\mathrm{CAlg}(\mathcal{D}(R)^{\wedge}_p))\]
  are the functors induced by the base change along $h$.
  \end{rmk}
     
      \section{Comparison with log crystalline complexes}
      
      In this section we review the construction of log crystalline cohomology
      (or Hyodo--Kato cohomology) over various bases
      using the saturated log de Rham--Witt complex. A version of these results is in
      \cite[Section~5]{Ya}, but we record the details
      here since some necessary verifications were omitted, and since only the log
      structure $[\alpha]\colon N\to W(k)$ given by the
      Teichm\"uller lift of a log point $N\to k$
      on $W(k)$ was considered in \cite{Ya}. Note that our argument is  slightly different from that of \cite{Ya}. 
      \medbreak
      
      Throughout this section all monoids and log structures are fine, and pre-log
      rings are understood through their associated fine log schemes. For a
      $p$-adic fine log PD-base $\sR=(R,N)$, we use the notation
      \[
      R\Gamma_{\rm crys}(-/\sR)
      :=R\varprojlim_n
      R\Gamma_{\rm crys}\bigl(-/(\sR/p^n,pR/p^n,\gamma_{\rm can})\bigr).
      \]
      We write $\widehat\otimes_R^{\mathbb L}$ for the derived $p$-completed tensor
      product.
      
      \subsection{The crystalline--de Rham comparison}
      
      \begin{const}\label{const;Kato-comparison}
      	Let $\sR=(R,\alpha_R\colon N\to R)$ be as in \eqref{eq;SN}, and let
      	$(\sA=(A,\alpha\colon L\to A),F_A)$ be a $p$-torsion-free special
      	$F$-pre-log algebra over $\sR$. Assume that $\sA$ is log smooth over $\sR$,
      	and put
      	\[
      	\sA_0=(A/p,L).
      	\]
      	Applying \cite[Theorem~6.4]{katolog} over $R/p^n$ and passing to the derived
      	inverse limit gives a natural multiplicative equivalence
      	\begin{equation}\label{eq;omegalogcryscomparison}
      		c_{\sA}\colon
      		R\Gamma_{\rm crys}(\sA_0/\sR)
      		\xrightarrow{\ \sim\ }
      		\widehat\omega^*_{\sA/\sR}
      		\qquad\text{in    } \;
            \mathrm{CAlg}(\mathcal{D}(R)^\wedge_p). 
      	\end{equation}
      	At finite level this is given by the equivalence of complexes of sheaves
      	\begin{equation}\label{eq;omegalogcryscomparison2}
      		R(u^{\log}_{\sA_0/(\sR/p^n)})_*
      		\mathcal O_{\sA_0/(\sR/p^n),\rm crys}
      		\simeq
      		\omega^*_{(\sA/p^n)/(\sR/p^n)}
      	\end{equation}
      	on $(\Spec(A/p))_{\acute et}$ 
        from \cite[Theorem~6.4]{katolog}.
      \end{const}
      
      We record explicitly the functoriality with respect to a morphism of special
      fibres.
      
      \begin{lem}\label{lem;Kato_fixed_base_lift_independence}
      	Let $\underline T$ be a $p$-adic fine log
      	PD-base, and let
      	$u\colon\underline P_0\to\underline Q_0$ be a morphism of affine fine
      	pre-log algebras over  the special fibre $\underline T_0$ of $\underline T$.
        Let $\underline P$ and
      	$\underline Q$ be affine log-smooth liftings
        of $\underline P_0$ and $\underline Q_0$
        over $\underline T$ respectively,
        and let
      	\[
      	\widetilde u_1,\widetilde u_2\colon
      	\underline P\longrightarrow\underline Q
      	\]
      	be two liftings of $u$ over $\underline T$. Then
      	\[
      	\widehat\omega^*(\widetilde u_1)
      	\simeq
      	\widehat\omega^*(\widetilde u_2)
      	\quad\text{in }\mathcal D(T)^\wedge_p.
      	\]
      	More precisely, if $c_{\underline P}$ and $c_{\underline Q}$ denote Kato's
      	comparison equivalences \eqref{eq;omegalogcryscomparison}, 
        then, 
      	\begin{equation}\label{eq;Kato-lift-functoriality}
      		\widehat\omega^*(\widetilde u_i)\circ c_{\underline P}
      		\simeq
      		c_{\underline Q}\circ u^*\quad\text{ for }
           i=1,2.
      	\end{equation}
      	These identifications are compatible with composition.
      \end{lem}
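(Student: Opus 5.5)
The plan is to deduce the statement from the naturality of Kato's comparison in \cite[Theorem~6.4]{katolog}, applied at each finite level $\underline T/p^n$, and then pass to the derived limit. The point is that the left-hand side $R\Gamma_{\rm crys}(\underline P_0/\underline T)$ and the map $u^*\colon R\Gamma_{\rm crys}(\underline P_0/\underline T)\to R\Gamma_{\rm crys}(\underline Q_0/\underline T)$ depend only on $u$ and not on any lift. Once \eqref{eq;Kato-lift-functoriality} is established for each $i$, the first assertion follows formally: $\widehat\omega^*(\widetilde u_1)\simeq c_{\underline Q}\circ u^*\circ c_{\underline P}^{-1}\simeq\widehat\omega^*(\widetilde u_2)$, since $c_{\underline P}$ is an equivalence.

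To prove \eqref{eq;Kato-lift-functoriality} for a fixed lift $\widetilde u=\widetilde u_i$, I would work at finite level $n$ on the étale sites. The morphism $u$ gives a morphism of log crystalline topoi over $\underline T/p^n$ compatible with the projections $u^{\log}$ to the étale sites. Kato's quasi-isomorphism \eqref{eq;omegalogcryscomparison2} is built through the linearization functor $L$ attached to the log-smooth lift $\underline P/p^n$, via the resolution $\mathcal O_{\rm crys}\to L(\omega^*_{\underline P/p^n})$. This resolution is functorial for any morphism of lifts covering $u$: the lift $\widetilde u$ induces a map $L(\omega^*_{\underline P})\to u_{\rm crys,*}L(\omega^*_{\underline Q})$ compatible with the augmentations from $\mathcal O$. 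Applying $Ru^{\log}_*$ and the identity $Ru_*L(-)\simeq(-)$ yields a commutative square on the level of sheaves. Its top row is the crystalline pullback and its bottom row is $\omega^*(\widetilde u)$. Taking $R\Gamma$ and $R\varprojlim_n$ gives \eqref{eq;Kato-lift-functoriality} in $\mathcal D(T)^\wedge_p$. All constructions are compatible with the transition maps in $n$ because $\underline P/p^{n+1}\otimes T/p^n=\underline P/p^n$.

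The main obstacle is making this a genuine homotopy in the $\infty$-categorical target, rather than a separate commutative square on cohomology for each $n$. The resolutions should be chosen strictly functorially, for instance via the Čech–Alexander (PD-envelope) complex of the closed immersion $\underline P_0\hookrightarrow\underline P$. Then $\widetilde u$ induces an honest map of cosimplicial complexes, and the comparison with $\omega^*$ is a zigzag of strict quasi-isomorphisms of cdga's that is natural in pairs (lift, morphism of lifts). This also gives compatibility with composition: for $\underline P\to\underline Q\to\underline R$, the squares for $\widetilde u$ and $\widetilde v$ paste to the square for $\widetilde v\circ\widetilde u$, because both the linearization maps and $u^*$ are strictly functorial. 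Since the target is only $\mathcal D(T)^\wedge_p$, independence of the lift need not be promoted to a coherent homotopy between $\widetilde u_1$ and $\widetilde u_2$; for that the route through $c$ suffices.
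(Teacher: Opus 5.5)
Your proposal is correct and follows essentially the same route as the paper. Both arguments work modulo $p^n$ and observe that Kato's de Rham/linearization resolution from \cite[(6.9)]{katolog} identifies the map induced by any lift $\widetilde u_i$ with the site-theoretic pullback $u^*$. Lift-independence then follows because $u^*$ depends only on $u$, and compatibility with composition comes from the functoriality (transitivity) of the linearization maps. Your \v{C}ech--Alexander remark on strict functoriality is an extra refinement addressing a coherence question the paper leaves implicit, not a different method.
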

      
      \begin{proof}
      	It is enough to work modulo $p^n$ and then pass to the derived inverse limit.
      	The pullback $u^*$ is the site-theoretic pullback supplied by the functoriality
      	of the logarithmic crystalline topos in \cite[(5.9)]{katolog}. The comparison
        \eqref{eq;omegalogcryscomparison2} 
      	constructed by the de Rham resolution in its
      	proof \cite[(6.9)]{katolog}, identifies the map induced by any chosen lifting
      	$\widetilde u_i$ with this site-theoretic pullback. Since the site-theoretic pullback depends
      	only on $u$, the two maps agree in the derived category.
    Compatibility with composition follows from the
      	transitivity of the crystalline stratification used in the construction of
      	Kato's comparison.
      \end{proof}
      
      In particular, for a morphism $(f,\psi)\colon\sA\to\sB$ of log-smooth
      special $F$-pre-log algebras over $\sR$, the diagram
      \begin{equation}\label{omega-logcrysCD0}
      	\xymatrix{
      		R\Gamma_{\rm crys}(\sA_0/\sR)\ar[d]^-{(\bar f,\psi)^*}\ar[r]^-{c_{\sA}}
      		&\widehat\omega^*_{\sA/\sR}\ar[d]^{(f,\psi)^*}\\
      		R\Gamma_{\rm crys}(\sB_0/\sR)\ar[r]^-{c_{\sB}}
      		&\widehat\omega^*_{\sB/\sR}
      	}
      \end{equation}
      commutes in $\mathcal D(R)^\wedge_p$.
      
      \subsection{Divided Frobenius and strict Dieudonn\'e algebras}
      We begin with the following  Lemma.

      \begin{lem}\label{lem;log-dR-Frobenius-base-change}Let
      	$(\sA=(A,\alpha\colon L\to A),F_A)$ be a $p$-torsion-free special
      	$F$-pre-log algebra over $\sR$ such that   $\sA$ is log smooth over $\sR$. 
      	There is a canonical equivalence of completed cdga's
      	\[
      	\widehat\omega^*_{\sA/\sR}
      	\widehat\otimes^{\mathbb L}_{R,F_R}R
      	\xrightarrow{\ \sim\ }
      	\widehat\omega^*_{\sAFt 1/\sR},
      	\]
      	where $\sAFt1=(A^{(1)}=A\otimes_{R,F_R}R,\alpha^{(1)}:L\to A^{(1)},F_{\AFt 1}=F_A\otimes F_R)$
      	is the $F$-pre-log algebra over $\sR$ introduced in Remark~\ref{rmk;Flogalgebra-Frobenius}.
      \end{lem}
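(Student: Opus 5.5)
The plan is to build the map degreewise from the base change of log differentials, show that each term of the de Rham complex is flat over $R$, and then pass to $p$-completions. No Frobenius input is needed.

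\emph{Step 1: base change of log differentials.} The pre-log algebra $\sAFt1$ is, by construction, the base change of $\sA$ along $F_R\colon R\to R$. On $L^o$ the map $\alpha^{(1)}$ is $\alpha$ followed by $A\to A^{(1)}$. On $N$ it is $\alpha_R$ followed by the structure map, which is the base-change log structure on the $N$ factor.

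There is one subtlety: $F_R\circ\alpha_R$ need not equal $\alpha_R$ on $N_2$. So $(F_R,\id_N)$ is not a morphism of pre-log rings $\sR\to\sR$. The way around this is to use the description of $\omega^1$ by generators and relations:
\[
\omega^1_{\sA/\sR}
=
\bigl(\Omega^1_{A/R}\oplus (A\otimes_{\Z}L^{\rm gp})\bigr)\big/\bigl(d\alpha(l)-\alpha(l)\otimes l,\ 1\otimes n\bigr),
\qquad l\in L,\ n\in N.
\]
Each term here commutes with the ring base change $-\otimes_{R,F_R}R$. The relation $1\otimes n=0$ for $n\in N$ only uses $N$ abstractly, and $d\alpha^{(1)}(n)=0$ holds because $\alpha^{(1)}(n)$ lies in the image of $R$. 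Hence
\[
\omega^i_{\sA/\sR}\otimes_{R,F_R}R\cong\omega^i_{\sAFt1/\sR}
\]
for all $i$, and these isomorphisms are compatible with $d$ and with the products. Equivalently, the universal log derivation of $\sA$, tensored with $R$, has the universal property for $\sAFt1$. One checks this directly: a log derivation of $\sAFt1$ over $\sR$, restricted along $A\to A^{(1)}$ and $L\xrightarrow{\id}L$, is a log derivation of $\sA$ over $\sR$, and it is $R$-linear in the second factor. Canonicity comes from this universal property. The map is the one induced by $(A\to A^{(1)},\id_L)$, which is a map of pre-log rings covering $(F_R,\id_N)$, where the target carries the base-change log structure by definition.

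\emph{Step 2: passage to derived $p$-completed tensor products.} Since $\sA$ is log smooth over $\sR$, each $\omega^i_{\sA/\sR}$ is a finite projective $A$-module; this was already used in the proof of Lemma~\ref{lem;WSatomegaV}. If $A$ is flat over $R$ (Remark~\ref{rmk:Cartier_smooth_flat}, in the Cartier type case), then each $\omega^i$ is $R$-flat. In that case the derived tensor product $\omega^*_{\sA/\sR}\otimes^{\mathbb L}_{R,F_R}R$ is computed termwise by the underived one. I will assume this flatness, or at least $p$-torsion-freeness of the $\omega^i$, and otherwise argue modulo $p^n$.

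Derived $p$-completion commutes with reducing modulo $p^n$. So it suffices to have the isomorphism
\[
(\omega^*_{\sA/\sR}/p^n)\otimes^{\mathbb L}_{R/p^n,F_R}R/p^n\cong\omega^*_{\sAFt1/\sR}/p^n
\]
compatibly in $n$, and then take $R\varprojlim$. This is Step 1 reduced modulo $p^n$, since log differentials also commute with the base change $R\to R/p^n$. Because $\widehat\omega^*/p^n=\omega^*/p^n$, we obtain
\[
\widehat\omega^*_{\sA/\sR}\widehat\otimes^{\mathbb L}_{R,F_R}R\simeq\widehat\omega^*_{\sAFt1/\sR}
\]
as completed cdga's. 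The multiplicative structure is preserved because the Step 1 maps are maps of cdga's.

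\emph{Expected obstacle.} The main point to justify is Tor-independence, that is, the termwise flatness used in Step 2. Log smoothness gives local freeness of the $\omega^i$ over $A$, but flatness of $A$ over $R$ needs integrality (Cartier type). Without that assumption, I would instead invoke the derived (animated) log cotangent complex. For log smooth $\sA$ it is discrete and equal to $\omega^1$, and it satisfies base change under derived tensor products, which gives the comparison without any flatness hypothesis.
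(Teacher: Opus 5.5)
Your argument is correct in substance, but it verifies the isomorphism by a different route; it relies on a Tor-independence that the paper also leaves tacit (see the second paragraph).

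The map is the same in both. The paper also applies the universal property of $\omega^1_{\sA/\sR}$ to the log derivation $(d^{(1)}\circ\iota,\ l\mapsto d^{(1)}\log l)$ with values in $\omega^1_{\sAFt{1}/\sR}$. The paper then proves that the completed map $\beta$ is an equivalence only modulo $p$:
\begin{itemize}
\item there $(\overline F_R,\ p\cdot)$ is a genuine morphism of pre-log rings $\sR/p\to\sR/p$;
\item the square with $(\overline\iota,\tau)\colon\sA/p\to\sAFt{1}/p$ is cocartesian, where $\tau=\id$ on $L^o$ and multiplication by $p$ on $N$, so the standard base-change theorem for log differentials applies;
\item the discrepancy between $\tau$ and $\id_L$ is invisible, since $d\log n=0$ for $n\in N$;
\item derived Nakayama concludes.
\end{itemize}
You instead compare the two presentations of $\omega^1$ integrally. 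You use that both $\iota(\alpha(n))=1\otimes F_R(\alpha_R(n))$ and $\alpha^{(1)}(n)=1\otimes\alpha_R(n)$ come from $R$, so on both sides the $N$-relations reduce to $1\otimes n=0$; relations for $l=l^o+n$ follow by Leibniz. This gives an underived, termwise isomorphism $\omega^i_{\sA/\sR}\otimes_{R,F_R}R\cong\omega^i_{\sAFt{1}/\sR}$ already before completion, with no need for a morphism of bases or for Nakayama. The paper's route instead lets one quote an off-the-shelf base-change statement, at the price of a detour through characteristic $p$.

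Two corrections.

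First, your sentence that $(\iota,\id_L)$ is a map of pre-log rings is false, and it contradicts your own remark about $(F_R,\id_N)$. In general $\iota(\alpha(n))\neq\alpha^{(1)}(n)$: for example take $n\in N_1$ with $\alpha_R(n)=T$ and $F_R(T)=T^p$, or $\alpha_R(1)=pa$ over $\Ainf$. Fortunately it is not used: your universal-property check only needs that $\iota(\alpha(n))$ lies in the image of $R$.

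Second, the Tor-independence you flag is genuinely needed, and neither of your fallbacks removes it.
\begin{itemize}
\item Working modulo $p^n$ still requires $\omega^i/p^n$ to be Tor-independent of $R/p^n$ along $F_R$ before Step 1 mod $p^n$ computes the derived tensor product.
\item A log cotangent complex argument would at best describe the derived pushout $A\otimes^{\mathbb L}_{R,F_R}R$, which is not $\AFt{1}$ without flatness. It would also need a lift of $F_R$ to a morphism of base pre-log rings, which does not exist in general.
\end{itemize}
The correct justification is $R$-flatness of the $\omega^i_{\sA/\sR}$, e.g.\ $A$ flat over $R$ as in Remark~\ref{rmk:Cartier_smooth_flat}. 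Alternatively, it suffices that $F_R$ is an automorphism, which is assumed right after the lemma and holds in every application. The paper's proof uses the same Tor-independence tacitly, when it identifies the reduction modulo $p$ of the source with the underived base change. So this is not a weakness of your route relative to the paper's.
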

      
      \begin{proof}
First, note that the assumption on $\sA$ implies that $\sAFt1$ is log smooth over $\sR$. 
      	Let
\[
\iota\colon A\longrightarrow A^{(1)}=A\otimes_{R,F_R}R,
\qquad a\longmapsto a\otimes1.
\]
Write $(d^{(1)},d^{(1)}\log)$ for the standard 
relative logarithmic
derivation of $\sAFt1$ over $\sR$ as in Proposition \ref{prop;omegaDAlog},
 and consider the maps
\[
D\colon A\longrightarrow\omega^1_{\sAFt1/\sR},
\qquad
\Delta\colon L\longrightarrow\omega^1_{\sAFt1/\sR}
\]
defined by
\[
D=d^{(1)}\circ\iota,
\qquad
\Delta(l)=d^{(1)}\log l.
\]
Since $\iota(r)=1\otimes F_R(r)$ for $r\in R$, one has $D(r)=0$, and
$\Delta(n)=0$ for $n\in N$. If $l\in L^o$, then
$\alpha^{(1)}(l)=\iota(\alpha(l))$, hence
\[
D(\alpha(l))={d^{(1)}(\alpha^{(1)}(l))=\alpha^{(1)}(l)d^{(1)}\log l =} \iota(\alpha(l))\Delta(l).
\]
For $n\in N$, both sides vanish, since $\iota(\alpha(n))$ comes from
the base and $\Delta(n)=0$. Thus, using the decomposition $L=L^o\oplus N$,
$(D,\Delta)$ is a logarithmic derivation of $\sA$ over $\sR$, where
$\omega^1_{\sAFt1/\sR}$ is viewed as an $A$-module via $\iota$.
The
universal property of logarithmic differentials therefore gives an $F_R$-semilinear morphism of
cdga's and, after derived $p$-completion, a canonical map
\[
\beta\colon
\widehat\omega^*_{\sA/\sR}
\widehat\otimes^{\mathbb L}_{R,F_R}R
\longrightarrow
\widehat\omega^*_{\sAFt1/\sR}.
\]
Now note that, modulo $p$, the pair
\[
F_{\sR/p}=(\overline F_R,\;{\text{$p$ on $N$} })\colon\sR/p\longrightarrow\sR/p
\]
is a morphism of pre-log rings, and the square
\[
\begin{tikzcd}
\sR/p \arrow[r,"F_{\sR/p}"] \arrow[d]
&
\sR/p \arrow[d]
\\
\sA/p \arrow[r,"{(\overline\iota,\tau)}"']
&
\sAFt1/p
\end{tikzcd}
\]
is cocartesian, where $\tau$ is the identity on $L^o$ and multiplication
by $p$ on $N$. The reduction of $\beta$ is the corresponding
base-change map: this is clear on $L^o$, while on $N$ both logarithmic
differentials vanish. Base change for logarithmic differentials and
exterior powers therefore shows that $\beta$ is an equivalence modulo
$p$. Since its source and target are derived $p$-complete, derived
Nakayama implies that $\beta$ is an equivalence.
      \end{proof}
       Assume now that $\sA_0 = \sA/p$ is moreover of Cartier type over $\sR/p$. Then
      $\sA\to\sR$ is integral (this is part of the condition of being of Cartier type) and log smooth. By
      Remark~\ref{rmk:Cartier_smooth_flat}, $A$ and the terms of
      $\omega^*_{\sA/\sR}$ are $p$-torsion-free.
      Assume also from now on that $F_R$ is an automorphism. 
      Note now that pullback  by the relative Frobenius from Remark \ref{rmk;Flogalgebra-Frobenius} 
      satisfies
      \[
      F_{\sA/\sR}^*\bigl(\omega^i_{\sAFt1/\sR}\bigr)
      \subseteq p^i\omega^i_{\sA/\sR}.
      \]
      Indeed, $dF_A(x)$ is divisible by $p$ for $x\in A$,
      $F_{\sA/\sR}^*(d\log l)=p\,d\log l$ for $l\in L^o$, and the relative
      logarithmic differentials of elements of $N$ vanish. Dividing by $p^i$ in
      degree $i$ therefore gives a natural map
      \[
      \widetilde F_{\sA/\sR}\colon
      \widehat\omega^*_{\sAFt1/\sR}
      \longrightarrow
      L\eta_p\bigl(\widehat\omega^*_{\sA/\sR}\bigr).
      \]
      Modulo $p$, this is the inverse logarithmic Cartier map, after the usual
      identification of $\eta_p(C)/p$ with $H^*(C/p)$ equipped with its Bockstein differential
      (cf. \cite[2.4.5]{BLM}.
      Since $\sA_0$ is of Cartier type over $\sR/p$, we obtain using Lemma \ref{lem;log-dR-Frobenius-base-change} a
      natural equivalence
      \begin{equation}\label{eq;crysfrobomega}
      	\widehat\omega^*_{\sA/\sR}
      	\widehat\otimes^{\mathbb L}_{R,F_R}R
      	\xrightarrow{\ \sim\ }
      	L\eta_p\bigl(\widehat\omega^*_{\sA/\sR}\bigr)
      	\qquad\text{in }\;
         \mathrm{CAlg}(\mathcal{D}(R)^\wedge_p). 
             \end{equation}
      
      Transporting \eqref{eq;crysfrobomega} through
      \eqref{eq;omegalogcryscomparison} equips
      $R\Gamma_{\rm crys}(\sA_0/\sR)$ with a canonical
      $(F_{R})_*\circ L\eta_p$-fixed-point structure. We write
      \[
      \mathbf D_{\rm crys}(\sA_0/\sR)\in\DA_R^{\str}
      \]
      for the corresponding strict Dieudonn\'e algebra under
      \cite[Corollary~7.4.9, Remark 7.6.4 and Examples~7.6.5, 7.6.7]{BLM}. Thus, if
      $U\colon\DA_R^{\str}\to \mathrm{CAlg}(\mathcal{D}(R)^\wedge_p). $ is the forgetful functor, there is an equivalence
      in $\mathrm{CAlg}(\mathcal{D}(R)^\wedge_p)$.
      \[
      U\mathbf D_{\rm crys}(\sA_0/\sR)
      \simeq R\Gamma_{\rm crys}(\sA_0/\sR).
      \]
      
      Let
       \begin{equation}\label{eq;comegaUWSat}
      q_{\sA}\colon
      \widehat\omega^*_{\sA/\sR}
      \longrightarrow
      U\WSat(\widehat\omega^*_{\sA/\sR})
      \end{equation}
      be the natural map. The Cartier criterion and
      \cite[Corollary~2.8.5]{BLM} show that $q_{\sA}$ is an equivalence.
      Compatibility of Kato's comparison with divided relative Frobenius gives an
      isomorphism
      \begin{equation}\label{eq;Theta-crys}
      	\Theta_{\sA}\colon
      	\WSat(\widehat\omega^*_{\sA/\sR})
      	\xrightarrow{\ \sim\ }
      	\mathbf D_{\rm crys}(\sA_0/\sR)
      	\qquad\text{in }\DA_R^{\str},
      \end{equation}
      normalized by the canonical homotopy
      \begin{equation}\label{eq;Theta-normalization}
      	U(\Theta_{\sA})\circ q_{\sA}\circ c_{\sA}
      	\simeq\id_{R\Gamma_{\rm crys}(\sA_0/\sR)}.
      \end{equation}

      \begin{rmk}
      	The object $\mathbf D_{\rm crys}(\sA_0/\sR)$ is defined intrinsically by $R\Gamma_{\rm crys}(\sA_0/\sR)$ as a fixed-point object 
        under the equivalence of
      	\cite[Corollary~7.4.9, Remark 7.6.4 and Examples~7.6.5, 7.6.7]{BLM}. In particular, it is
      	independent of any choice of a chain-level representative.
      \end{rmk}

      \begin{thm}\label{cor;thm;omega-logcrys}
      	Let $(\sR=(R,N),F_R)$ be as in \eqref{eq;SN}, and assume that $F_R$ is an
      	automorphism. Then there is a natural equivalence
      	\[
      	U\circ\mathcal W\uomega^*_{-/\sR}
      	\xrightarrow{\ \sim\ }
      	R\Gamma_{\rm crys}(-/\sR)
      	\quad\text{in }
      	\Fun\bigl(\mathrm{lPoly}_{\sR/p},
        \mathrm{CAlg}(\mathcal{D}(R)^\wedge_p) 
        \bigr),
      	\]
      where $U\colon\DA_R^{\str}\to\mathrm{CAlg}(\mathcal{D}(R)^\wedge_p). $ is the forgetful functor.
      \end{thm}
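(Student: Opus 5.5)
The plan is to build the equivalence objectwise from the isomorphisms $\Theta_{\sA}$ of \eqref{eq;Theta-crys}, and then use the fact that $\DA_R^{\str}$ is an ordinary $1$-category to reduce naturality to equalities of morphisms there.

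\emph{Objectwise construction.} Let $\sP_0\in\mathrm{lPoly}_{\sR/p}$. Take $\sP$ to be its standard $p$-complete polynomial $F$-lift over $\sR$ (Remark \ref{rmk;Flogalgebra}(1)), a special $F$-pre-log algebra.
\begin{itemize}
\item $\sP$ is log smooth over $\sR$, and $\sP_0$ is of Cartier type over $\sR/p$, being a free log polynomial algebra.
\item Hence $\Theta_{\sP}\colon\WSat(\widehat\omega^*_{\sP/\sR})\xrightarrow{\sim}\mathbf D_{\rm crys}(\sP_0/\sR)$ is defined in $\DA_R^{\str}$.
\item By Theorem \ref{thm;mathcaWomega}, its source is $\mathcal W\uomega^*_{\sP_0/\sR}$.
\end{itemize}
Applying $U$ and the identification $U\mathbf D_{\rm crys}(\sP_0/\sR)\simeq R\Gamma_{\rm crys}(\sP_0/\sR)$ gives the objectwise equivalence $\mathcal W\uomega^*_{\sP_0/\sR}\to R\Gamma_{\rm crys}(\sP_0/\sR)$. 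By \eqref{eq;Theta-normalization}, this equivalence is $c_{\sP}^{-1}\circ U(q_{\sP})^{-1}$. Note that $q_{\sP}$ is an equivalence by the Cartier criterion.

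\emph{Making $\mathbf D_{\rm crys}$ a functor.} It suffices to show that $\sP_0\mapsto\mathbf D_{\rm crys}(\sP_0/\sR)$ is a functor $\mathrm{lPoly}_{\sR/p}\to\DA_R^{\str}$ and that the $\Theta_{\sP}$ are natural with respect to the functoriality $\mathbf W_{\sR}(u)$ of Theorem \ref{thm;mathcaWomega}.

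Functoriality of $\mathbf D_{\rm crys}$ comes from the equivalence of \cite[Corollary 7.4.9]{BLM} between strict Dieudonné algebras and $(F_R)_*L\eta_p$-fixed points. One needs $u^*\colon R\Gamma_{\rm crys}(\sP_0/\sR)\to R\Gamma_{\rm crys}(\sQ_0/\sR)$ to be compatible with the fixed-point structure \eqref{eq;crysfrobomega}. Choose a lift $\widetilde u\colon\sP\to\sQ$ of $u$ compatible with the Frobenius lifts. Such a lift exists because the source is a free polynomial algebra: send each variable to a Teichmüller-type or $F$-compatible lift, or alternatively use Proposition \ref{prop;omegaUniversal} with target $\WSat(\widehat\uomega^*_{\sQ/\sR})$. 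Then transport the identity $\widehat\omega^*(\widetilde u)\circ c_{\sP}\simeq c_{\sQ}\circ u^*$ of Lemma \ref{lem;Kato_fixed_base_lift_independence}.

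\emph{Naturality of $\Theta$.} Since $\DA_R^{\str}$ is a $1$-category, it suffices to check that $\mathbf D_{\rm crys}(u)\circ\Theta_{\sP}=\Theta_{\sQ}\circ\mathbf W_{\sR}(u)$ as morphisms.
\begin{itemize}
\item Morphisms out of the strict object $\mathcal W\uomega^*_{\sP_0/\sR}$ are determined after applying $U$ and precomposing with the unit $q_{\sP}$. On the log side this is the adjunction \eqref{eq:WSat_left_adjoint}.
\item On underlying Dieudonné algebras, morphisms between strict objects are determined by their underlying maps of complexes, because the functor of \cite[7.4.9]{BLM} is fully faithful.
\item After precomposing with $q_{\sP}\circ c_{\sP}$, both composites reduce to $c_{\sQ}\circ u^*$, up to the normalization \eqref{eq;Theta-normalization}.
\item This uses that $\mathbf W_{\sR}(u)\circ q_{\sP}=q_{\sQ}\circ\widehat\omega^*(\widetilde u)$ for the chosen $F$-compatible lift. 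That identity holds by the uniqueness in Proposition \ref{prop;omegaUniversal}, since both sides have the same reduction $e_{\sQ_0}\circ u$.
\end{itemize}
Once naturality holds, compatibility with composition and identities is automatic in the $1$-category. Multiplicativity in $\mathrm{CAlg}(\mathcal D(R)^\wedge_p)$ follows because $c$, $q$ and $\Theta$ are all multiplicative.

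\emph{Main obstacle.} The hardest step will be the naturality check. Lemma \ref{lem;Kato_fixed_base_lift_independence} only gives homotopies in $\mathcal D(R)^\wedge_p$, not strict equalities, so these must be promoted to equalities of morphisms in $\DA_R^{\str}$. I would handle this using full faithfulness of $\DA_R^{\str}$ inside the fixed-point $\infty$-category (\cite[Corollary 7.4.9]{BLM}): the mapping spaces there are discrete, so homotopic maps are equal. I would also need to check that the homotopy-commutative squares respect the fixed-point structures, i.e.\ that Kato's comparison intertwines the divided Frobenius \eqref{eq;crysfrobomega} functorially in $u$.
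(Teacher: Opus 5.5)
Your objectwise construction via $\Theta_{\sP}$ matches the paper's. The naturality argument, however, has a genuine gap: it rests on a pre-log lift $\widetilde u\colon\sP\to\sQ$ of $u$ that is compatible with the Frobenius lifts, and such lifts do not exist in general.

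Already without log variables, let $u\colon (R/p)[X]\to (R/p)[T_1,T_2]$ send $X$ to $T_1+T_2$. An $F$-compatible lift needs $y=T_1+T_2+pz$ with $F_Q(y)=y^p$. Since $y^p\equiv(T_1+T_2)^p \bmod p^2$, this forces $\bar z^{\,p}=\sum_{0<i<p}\frac1p\binom pi T_1^iT_2^{p-i}$ in $(R/p)[T_1,T_2]$. That is impossible: the right side contains $T_1T_2^{p-1}$, while $p$-th powers lie in $(R/p)[T_1^p,T_2^p]$.

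On log generators the pre-log condition leaves no freedom at all. If $\psi$ sends a generator $Y$ to $n\in N_2$, then $\widetilde u(Y)=\alpha_R(n)$, and $F$-compatibility would require $F_R(\alpha_R(n))=\alpha_R(n)^p$. This fails, e.g., for $\alpha_R(1)=pa$ with $a\neq0$ as in the application, since $F_R$ is an automorphism of a $p$-torsion-free, $p$-adically separated ring. This is precisely why Proposition \ref{prop;omegaUniversal} needs the explicit computation of Remark \ref{rmk;WSatomegaIndep}.

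Your fallback does not help either. Proposition \ref{prop;omegaUniversal} gives a Dieudonn\'e map $\widehat\uomega^*_{\sP/\sR}\to\WSat(\widehat\uomega^*_{\sQ/\sR})$, not a morphism of log smooth lifts, so Lemma \ref{lem;Kato_fixed_base_lift_independence} cannot be applied to it. Hence two steps are unsupported: your construction of $\mathbf D_{\rm crys}(u)$, and the identity $\mathbf W_{\sR}(u)\circ q_{\sP}=q_{\sQ}\circ\widehat\omega^*(\widetilde u)$, which needs $\widehat\omega^*(\widetilde u)$ to be a Dieudonn\'e morphism before uniqueness can be invoked. Separately, as you note yourself, equivalences in $\mathcal D(R)^\wedge_p$ only become equalities in $\DA_R^{\str}$ if the homotopies are compatible with the fixed-point structures. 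Discreteness of mapping spaces does not supply that, and your plan leaves it open.

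The paper avoids both problems. It obtains the Frobenius compatibility of $u^*_{\rm crys}$ intrinsically, with no Frobenius lift of $u$:
the relative Frobenii $F_{\sP_0/(\sR/p)}$ and $F_{\sQ_0/(\sR/p)}$ form a commutative square with $u^{(1)}$ and $u$;
functoriality of the log crystalline topos turns it into a commutative square of pullbacks;
Kato's de Rham resolution identifies the vertical maps with the divided Frobenius \eqref{eq;crysfrobomega}.
This gives $\mathbf D_{\rm crys}(u)$ and the honest morphism $\Lambda(u)=\Theta_{\sQ}^{-1}\circ\mathbf D_{\rm crys}(u)\circ\Theta_{\sP}$ in $\DA_R^{\str}$.

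An arbitrary, not $F$-compatible, pre-log lift $\widetilde u$ is then used only through Lemma \ref{lem;Kato_fixed_base_lift_independence}, to see that $U(\Lambda(u))\circ q_{\sP}\simeq q_{\sQ}\circ\widehat\omega^*(\widetilde u)$. This shows that $\Lambda(u)\circ q_{\sP}$ is compatible with $d\log$ via $\psi$ and has reduction $e_{\sQ_0}\circ u$. The uniqueness in Proposition \ref{prop;omegaUniversal} then gives $\Lambda(u)\circ q_{\sP}=\mathbf W_{\sR}(u)\circ q_{\sP}$, and the adjunction \eqref{eq:WSat_left_adjoint} gives $\Lambda(u)=\mathbf W_{\sR}(u)$. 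Since the comparison is detected on reduced degree-zero data and the $d\log$ compatibility, no promotion of homotopies to fixed-point homotopies is required.
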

      
      \begin{proof}
      	Since $F_R$ is an automorphism, the absolute Frobenius on $R/p$ is an
      	automorphism; in particular, $R/p$ is perfect and reduced. For
      	$\mathcal P_0=(P_0, L_P)\in\mathrm{lPoly}_{\sR/p}$, choose its standard special
      	$F$-lift $\mathcal P$ over $\sR$. 
        The isomorphism
      	\eqref{eq;Theta-crys} and Theorem~\ref{thm;mathcaWomega} give an 
      	objectwise equivalence
        \[
U\mathbf W_{\sR}(\mathcal P_0)
\xrightarrow[\sim]{\,U(\Theta_{\mathcal P})\,}
U\mathbf D_{\rm crys}(\mathcal P_0/\sR)
\simeq
R\Gamma_{\rm crys}(\mathcal P_0/\sR).
\] 
We now verify naturality. Let
      	$u\colon\mathcal P_0\to\mathcal{Q}_0$ be a morphism of polynomial
      	pre-log algebras.
        The relative Frobenius maps fit into a commutative
diagram of pre-log algebras over $\sR/p$
\[
\begin{tikzcd}
\mathcal P_0^{(1)}
\arrow[r,"u^{(1)}"]
\arrow[d,"F_{\mathcal P_0/(\sR/p)}"']
&
\mathcal Q_0^{(1)}
\arrow[d,"F_{\mathcal Q_0/(\sR/p)}"]
\\
\mathcal P_0
\arrow[r,"u"']
&
\mathcal Q_0 .
\end{tikzcd}
\]
Functoriality of the logarithmic crystalline topos
\cite[(5.9)]{katolog} therefore gives a commutative square of
crystalline pullback maps. Under the de Rham resolutions constructed
in \cite[(6.9)]{katolog}, its vertical maps identify with the divided
relative Frobenius maps defining \eqref{eq;crysfrobomega}. Hence, the compatibility of crystalline pullback with
divided relative Frobenius gives a morphism
\[
\mathbf D_{\rm crys}(u)\colon
\mathbf D_{\rm crys}(\mathcal P_0/\sR)
\longrightarrow
\mathbf D_{\rm crys}(\mathcal Q_0/\sR)
\]
in $\DA_R^{\str}$ whose underlying morphism (after applying $U$) is
$u_{\rm crys}^*$. Set
\[
\Lambda(u):=
\Theta_{\mathcal Q}^{-1}
\circ\mathbf D_{\rm crys}(u)
\circ\Theta_{\mathcal P}\colon
\mathbf W_{\sR}(\mathcal P_0)
\longrightarrow
\mathbf W_{\sR}(\mathcal Q_0).
\]
    Now   choose a pre-log lift
      	$\widetilde u\colon\mathcal P\to\mathcal Q$ over $\sR$, with underlying monoid map $\psi\colon L_P\to L_Q$. The functoriality in
Lemma~\ref{lem;Kato_fixed_base_lift_independence}, together with the
divided-Frobenius compatibility used in the construction of
$\Theta_{\mathcal P}$ and $\Theta_{\mathcal Q}$, shows that
$\Lambda(u)\circ q_{\mathcal P}$ (with the monoid map
$\psi$) is a morphism of logarithmic Dieudonn\'e algebras (the compatibility
with the logarithmic derivations follows from
$\widehat\omega^*(\widetilde u)(d\log l)=d\log\psi(l)$ for $ l\in L_P$). 
After forgetting the log and the Frobenius structures, this is
given by the following canonical equivalence
\[
\begin{aligned}
U(\Lambda(u))\circ q_{\mathcal P}
&\simeq
U(\Theta_{\mathcal Q})^{-1}
 \circ u_{\rm crys}^*
 \circ c_{\mathcal P}^{-1} \\
&\simeq
U(\Theta_{\mathcal Q})^{-1}
 \circ c_{\mathcal Q}^{-1}
 \circ\widehat\omega^*(\widetilde u) \\
&\simeq
q_{\mathcal Q}\circ\widehat\omega^*(\widetilde u),
\end{aligned}
\]
where the first and last equivalences use
\eqref{eq;Theta-normalization}, and the middle one uses
\eqref{eq;Kato-lift-functoriality}.
By Proposition~\ref{prop;omegaUniversal},
$\Lambda(u)\circ q_{\mathcal P}$ is therefore the unique logarithmic
Dieudonn\'e morphism classified by
$e_{\mathcal Q_0}\circ u$, where $e_{\mathcal Q_0}$ is as in Theorem~\ref{thm;mathcaWomega}.
By the construction in
Theorem~\ref{thm;mathcaWomega}, the same is true for $\mathbf W_{\sR}(u)\circ q_{\mathcal P}$, hence  $\Lambda(u)\circ q_{\mathcal P}
=
\mathbf W_{\sR}(u)\circ q_{\mathcal P}.$
Since the logarithmic structure on
$\mathbf W_{\sR}(\mathcal P_0)$ is induced by
$q_{\mathcal P}$, the adjunction
\eqref{eq:WSat_left_adjoint} gives $\Lambda(u)=\mathbf W_{\sR}(u)$. 
Equivalently,
\[
\mathbf D_{\rm crys}(u)\circ\Theta_{\mathcal P}
=
\Theta_{\mathcal Q}\circ\mathbf W_{\sR}(u),
\]
which proves naturality. All the maps involved are compatible with multiplication, so
the resulting equivalence takes values in
$\mathrm{CAlg}(\mathcal D(R)^\wedge_p)$. 
      \end{proof}



      \subsection{Changing the base log structure when Frobenius is invertible}

      Let
      \[
      \sR_i=(R,\alpha_i\colon N\to R),\qquad i=0,1,2,
      \]
      be pre-log structures satisfying \eqref{eq;SN} with respect to the same
      decomposition $N=N_1\oplus N_2$. In this subsection we assume that $F_R$ is
      an automorphism and that the reductions of the $\alpha_i$ modulo $p$ agree.
      Write $\sR_0^{\rm sp}=(R/p,N)$ for their common special fibre.
      We can translate our main result, stated for 
      the animation $L\mathcal{W}\Omega_{-/\mathcal{R}}^*$ of the functor $\mathcal{W}\Omega_{-/\mathcal{R}}^*$ from Theorem~\ref{thm;mathcaWomega},       
      in terms of crystalline complexes as follows. 
      \begin{cor}\label{cor:main_cor}
      	For every pair $i,j$ there is a canonical natural equivalence
      	\begin{equation}\label{eq:automorphism-transfer}
      		\Phi_{ij}\colon
      		dR\Gamma_{\rm crys}(-/\sR_i)
      		\xrightarrow{\ \sim\ }
      		dR\Gamma_{\rm crys}(-/\sR_j)
      	\end{equation}
      	in
      	\[
      	\Fun\bigl(\Ani(\PreLog_{\sR_0^{\rm sp}}), \mathrm{CAlg}(\mathcal{D}(R)^\wedge_p). \bigr).
      	\]
      	These equivalences satisfy
      	\[
      	\Phi_{ii}=\id,
      	\qquad
      	\Phi_{jk}\circ\Phi_{ij}\simeq\Phi_{ik},
      	\]
      	with the resulting higher cocycle compatibilities.
      	
      	On a polynomial object $\underline P_0$
        over $\sR_0^{\rm sp}$, 
        let $\underline P_i$ be the standard
      	lift over $\sR_i$, let
      	\[
      	c_{i,\underline P}\colon
      	R\Gamma_{\rm crys}(\underline P_0/\sR_i)
      	\xrightarrow{\sim}
      	\widehat\omega^*_{\underline P_i/\sR_i}
      	\]
      	be Kato's comparison \eqref{eq;omegalogcryscomparison}, and let
      	\[
      	j_{ij,\underline P}\colon
      	\widehat\omega^*_{\underline P_i/\sR_i}
      	\xrightarrow{\sim}
      	\widehat\omega^*_{\underline P_j/\sR_j}
      	\]
      	be the canonical identification of the underlying completed Dieudonn\'e
      	algebras as in \eqref{eq;omegaSSS}. 
        Then
      	\begin{equation}\label{eq:automorphism-transfer-Kato-formula}
      		\Phi_{ij}(\underline P_0)
      		\simeq
      		c_{j,\underline P}^{-1}\circ
      		j_{ij,\underline P}\circ
      		c_{i,\underline P}.
      	\end{equation}
      	
      	The construction has the following properties:
      	\begin{enumerate}
      		\item If $h\colon R \to R'$ is a morphism of $p$-complete rings such that $R'$ admits a Frobenius lift, which is an automorphism, and such that $h$ commutes with it, then  Kato's base-change equivalences identify
      		\[
      		h^*(\Phi_{ij})\simeq\Phi_{h\alpha_i,h\alpha_j}.
      		\]
      		These identifications are transitive under further
Frobenius-compatible base change.
      		\item
		Let $\sT=(T,\beta\colon N\to T)$ 
		be a $p$-adic fine log PD-base. Suppose that a given ring map $h\colon R\longrightarrow T$
		defines morphisms of log PD-bases
		\[
		h_i\colon\sR_i\longrightarrow\sT,
		\qquad
		h_j\colon\sR_j\longrightarrow\sT.
		\]
	(or, equivalently, suppose that 	$h\circ\alpha_i=\beta=h\circ\alpha_j$). 	For a polynomial object $\underline P_0$, let
		$\underline P_{0,T}$ denote its base change to the special
		fibre of $\sT$. Then the diagram
		\begin{equation}\label{eq:automorphism-transfer-specialization}
		\begin{tikzcd}
			R\Gamma_{\rm crys}(\underline P_0/\sR_i)
			\widehat\otimes^{\mathbb L}_{R,h}T
			\arrow[rr,
				"\Phi_{ij}(\underline P_0)
				\widehat\otimes^{\mathbb L}_{R,h}T"]
			\arrow[d,"\mathrm{BC}_{i,h}"']
			&&
			R\Gamma_{\rm crys}(\underline P_0/\sR_j)
			\widehat\otimes^{\mathbb L}_{R,h}T
			\arrow[d,"\mathrm{BC}_{j,h}"]
			\\
			R\Gamma_{\rm crys}(\underline P_{0,T}/\sT)
			\arrow[rr,"\id"']
			&&
			R\Gamma_{\rm crys}(\underline P_{0,T}/\sT)
		\end{tikzcd}
		\end{equation}
		commutes in $\mathcal D(T)^\wedge_p$, where $\mathrm{BC}_{i,h}$ (resp. $\mathrm{BC}_{j,h}$) denotes the base-change isomorphism. By animation, the same property extends to $\mathrm{Ani}(\mathrm{\PreLog}_{\mathcal{R}^{\rm sp}_0})$.
      	\end{enumerate}
      \end{cor}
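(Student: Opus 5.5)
The plan is to assemble $\Phi_{ij}$ from three earlier results: the base-change comparison $\varkappa$ of Corollary~\ref{main_cor_WOmega_agrees}, its animation in Remark~\ref{rmk;main_cor_WOmega_agrees}, and the crystalline identification of Theorem~\ref{cor;thm;omega-logcrys}. We write $dR\Gamma_{\rm crys}(-/\sR_i)$ for the animation (sifted left Kan extension from $\mathrm{lPoly}_{\sR_0^{\rm sp}}$) of the functor $R\Gamma_{\rm crys}(-/\sR_i)$ restricted to polynomial objects.

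Since $F_R$ is an automorphism, $R/p$ is perfect. Hence $(\sR_i/p)_{\rm red}=\sR_0^{\rm sp}$, and the hypothesis \eqref{eq:talpha} holds for every pair $(\alpha_i,\alpha_j)$. Theorem~\ref{cor;thm;omega-logcrys} gives natural equivalences
\[
\Xi_i\colon U\circ\mathcal W\uomega^*_{-/\sR_i}\xrightarrow{\sim}R\Gamma_{\rm crys}(-/\sR_i)
\]
on $\mathrm{lPoly}_{\sR_0^{\rm sp}}$, valued in $\mathrm{CAlg}(\mathcal D(R)^\wedge_p)$. Corollary~\ref{main_cor_WOmega_agrees} gives $\varkappa_{ij}$ as a natural isomorphism in the $1$-category $\DA_R^{\str}$. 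We set
\[
\Phi_{ij}|_{\mathrm{lPoly}}:=\Xi_j\circ U(\varkappa_{ij})\circ\Xi_i^{-1}
\]
and animate, which is legitimate because $\mathrm{CAlg}(\mathcal D(R)^\wedge_p)$ has sifted colimits that are computed underlying, as in Remark~\ref{rmk;multiplicativity}. The cocycle identities $\varkappa_{ii}=\id$ and $\varkappa_{jk}\varkappa_{ij}=\varkappa_{ik}$ hold strictly in a $1$-category, by the remark following Corollary~\ref{main_cor_WOmega_agrees}. Conjugating by the $\Xi$'s then gives strict identities on polynomial objects. Because left Kan extension is a functor on functor categories, the full coherent system of higher cocycle compatibilities follows.

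Next comes formula \eqref{eq:automorphism-transfer-Kato-formula}. By \eqref{eq;Theta-normalization}, the component of $\Xi_i$ at $\underline P_0$ is $U(\Theta_{\underline P_i})$, and $U(\Theta_{\underline P_i})\circ q_{\underline P_i}\circ c_{i,\underline P}\simeq\id$. Thus $\Xi_i^{-1}\simeq q_{\underline P_i}\circ c_{i,\underline P}$, using that $q$ is an equivalence by the Cartier criterion. The normalization \eqref{eq:kappa-unit-normalization} gives $U(\varkappa_{ij})\circ q_{\underline P_i}=q_{\underline P_j}\circ j_{ij,\underline P}$. Composing these, we get
\[
\Phi_{ij}(\underline P_0)\simeq U(\Theta_{\underline P_j})\,q_{\underline P_j}\,j_{ij,\underline P}\,c_{i,\underline P}\simeq c_{j,\underline P}^{-1}\,j_{ij,\underline P}\,c_{i,\underline P}.
\]

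For property (1), the pre-log structures $h\alpha_i$ on $R'$ still satisfy \eqref{eq;SN}, and they agree modulo $p$. The standard lifts base change: $\underline P_i\widehat\otimes_R R'$ is the standard lift over $\sR'_i$. Kato's comparison commutes with base change, and so does $j_{ij}$, since it is induced by the identity of $P$. By the formula just proved, the two sides agree on polynomial objects, and animation extends this. Transitivity follows because all the maps involved are literal base-change identifications.

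For property (2), we again reduce to polynomial objects. There we must check that
\[
\mathrm{BC}_{j,h}\circ\bigl(c_j^{-1}j_{ij}c_i\bigr)\widehat\otimes T\simeq\mathrm{BC}_{i,h}.
\]
Under Kato's comparison, $\mathrm{BC}_{i,h}$ corresponds to the map
\[
\widehat\omega^*_{\underline P_i/\sR_i}\widehat\otimes_R T\to\widehat\omega^*_{\underline P_T/\sT}
\]
induced by the morphism of lifts $\underline P_i\to\underline P_T$ over $h_i$. The key observation is that $h\circ\alpha_i=\beta=h\circ\alpha_j$. Hence $\underline P_i\otimes T$ and $\underline P_j\otimes T$ are the same pre-log $\sT$-algebra, and after base change $j_{ij}$ becomes the identity. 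Functoriality of Kato's comparison for morphisms of PD-bases (\cite[(6.4),(5.9)]{katolog}) then gives the commuting square.

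I expect the main obstacle to be this last compatibility. We need the base change of Kato's comparison along $h_i$ and along $h_j$ to be computed by the same de Rham resolution over $\sT$. Making this precise requires checking that the lift $\underline P_T$ and the identification $j_{ij}\otimes T=\id$ are compatible with the crystalline pullback for both PD-morphisms, at finite level modulo $p^n$, before passing to the limit.
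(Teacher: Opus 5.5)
Your proposal is correct and follows essentially the same route as the paper. Both define $\Phi_{ij}$ on polynomial objects as $U(\Theta_j)\circ U(\varkappa_{ij})\circ U(\Theta_i)^{-1}$ and animate, then take the cocycle identities from the strict $1$-categorical identities for $\varkappa$. Both obtain the Kato formula from \eqref{eq;Theta-normalization} together with \eqref{eq:kappa-unit-normalization}, and deduce (1) and (2) by base-changing that formula. The compatibility you single out as the main obstacle in (2), that Kato's comparison base-changed along $h_i$ and along $h_j$ gives the same $c_{\underline P_T}$, is exactly the point the paper asserts directly rather than verifies at finite level.
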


      \begin{proof}
	Let 
  $U\colon\DA_R^{\str}\to \mathrm{CAlg}(\mathcal{D}(R)^\wedge_p). $  
  be again the forgetful functor, write
	$\Theta_i=\Theta_{-/\sR_i}$
    (cf. \eqref{eq;Theta-crys}),
    and let
	\[
	\varkappa_{ij}\colon
	\mathcal W\uomega^*_{-/\sR_i}
	\xrightarrow{\sim}
	\mathcal W\uomega^*_{-/\sR_j}
	\]
	be the comparison provided by
	Corollary~\ref{main_cor_WOmega_agrees}. Define
	\[
	\Phi_{ij}
	:=
	U(\Theta_j)\circ U(\varkappa_{ij})\circ U(\Theta_i)^{-1}
	\]
	on polynomial objects, and then apply sifted left Kan
	extension.\footnote{Note that sifted left Kan extension preserves algebra objects (see Remark \ref{rmk;multiplicativity}). }
    This is a natural equivalence by
	Theorem~\ref{cor;thm;omega-logcrys}.
	For a standard polynomial lift, let
	\[
	q_{i,\underline P}\colon
	\widehat\omega^*_{\underline P_i/\sR_i}
	\longrightarrow
	U\WSat\bigl(
	\widehat\omega^*_{\underline P_i/\sR_i}
	\bigr)=\mathcal W\uomega^*_{\underline P_i/\sR_i}  
	\]
	be the  unit map. By construction
    of $\mathcal W\uomega^*_{-/\sR_i}$, 
	\[
	U(\varkappa_{ij})\circ q_{i,\underline P}
	=
	q_{j,\underline P}\circ j_{ij,\underline P}.
	\]
	Together with \eqref{eq;Theta-normalization}, this gives
	\eqref{eq:automorphism-transfer-Kato-formula}.

	The last assertion of
Corollary~\ref{main_cor_WOmega_agrees} gives
	\[
	\varkappa_{ii}=\id,
	\qquad
	\varkappa_{jk}\circ\varkappa_{ij}=\varkappa_{ik}
	\]
	in the $1$-category of strict Dieudonn\'e algebras. Since
	these are 1-categorical identities, their images give the stated
	cocycle identities and the resulting higher coherences.

	Property {\rm(1)} follows by applying Kato's base-change
	theorem to
	\eqref{eq:automorphism-transfer-Kato-formula}: the maps
	$c_{i,\underline P}$ and $j_{ij,\underline P}$ base-change
	to the corresponding maps over $R'$. Transitivity is
	inherited from Kato's base-change isomorphisms, and sifted
	left Kan extension gives the assertion on the animated
	category.

	For {\rm(2)}, let $\underline P_{i,T}$ and
	$\underline P_{j,T}$ be the base change to $\sT$ of the two standard lifts 
    $\underline P_{i}$ and
	$\underline P_{j}$ respectively.
    Since
	\[
	h\circ\alpha_i=\beta=h\circ\alpha_j,
	\]
	they are canonically the same pre-log algebra over $\sT$;
	denote it by $\underline P_T$. Under Kato's base-change
	equivalences, both $c_{i,\underline P}$ and
	$c_{j,\underline P}$ therefore become
	\[
	c_{\underline P_T}\colon
	R\Gamma_{\rm crys}(\underline P_{0,T}/\sT)
	\xrightarrow{\sim}
	\widehat\omega^*_{\underline P_T/\sT},
	\]
	while $j_{ij,\underline P}$ becomes the identity. Hence the
	base change of
	\eqref{eq:automorphism-transfer-Kato-formula} is
	\[
	c_{\underline P_T}^{-1}\circ\id\circ c_{\underline P_T}
	=\id,
	\]
	which is precisely the commutativity of
	\eqref{eq:automorphism-transfer-specialization}. 
	Sifted left Kan extension gives the assertion on the animated
	category.
\end{proof}
    
      In order to correctly recover log crystalline cohomology from its animated version, we need the following Lemma. 
      \begin{lem}\label{lem;leftKanCrys}
      	Let $\sR=(R,\alpha_R\colon N\to R)$ be as in \eqref{eq;SN} and write $\sR/p=(R/p,N)$. The restriction on the subcategory $\mathrm{lSm}^{\rm Cart}_{\sR/p}$ of affine fine log-smooth algebras of Cartier type of the log-crystalline cohomology functor
      	\[
      	R\Gamma_{crys}(-/\sR) \colon \PreLog_{\sR/p} \to \mathcal{D}(R)^\wedge_p
      	\]
      	is the left Kan extension of its restriction to $\mathrm{lPoly}_{\sR/p}$. In particular, derived log crystalline cohomology and underived crystalline cohomology coincide for log smooth algebras of Cartier type.  
         Moreover, the algebra structure of $R\Gamma_{crys}(X/\sR)$ for $X\in \mathrm{lSm}^{\rm Cart}_{\sR/p}$ is left Kan extended from that for polynomial objects. 
     	\begin{proof}This is basically well-known, and follows the same argument of the non-logarithmic counterpart. We give a proof for the convenience of the reader. 
      		Considering the restriction of $R\Gamma_{crys}(-/\sR)$ to  polynomial pre-log algebras and the resulting left Kan extension, (here, we use the fact that $\mathcal{D}(R)^\wedge_p$ admits colimits)
      		we obtain a natural comparison map from derived log crystalline cohomology $dR\Gamma_{crys}(-/\sR)$ to $R\Gamma_{crys}(-/\sR)$. Both functors satisfy étale descent: this is clear for the underived version. As for derived log crystalline cohomology, it is enough to note that, since $dR\Gamma_{crys}(-/\sR)$ takes value in $p$-complete $\sR$-modules, we can check descent by reduction modulo $p$. Since by construction derived log crystalline cohomology is obtained by Kan extension from polynomials pre-log algebras over $\sR/p$, on any such algebra 
      		$\sP$,  we can choose a polynomial algebra $\widetilde{\sP}$ over $\sR$ as its smooth lift. Then, the derived log crystalline cohomology $R\Gamma_{\text{crys}}(\sP/\sR)$ is exactly computed by the $p$-completion of $\omega_{\widetilde{\sP}/\sR}^{*}$ as \eqref{eq;omegalogcryscomparison}. 
      		Reducing modulo $p$ again, this is precisely $\omega_{\sP/(\sR/p)}^{*}$, thus $dR\Gamma_{crys}(-/\sR)/p$ is $L\omega^*_{-/(\sR/p)}$, which satisfies étale descent. At this point, the claim of the Lemma follows again from the fact that         for log smooth pre-log algebras $\sA$ of Cartier type over $\sR/p$,
      		the comparison map 
      		$L\omega^*_{\sA/(\sR/p)}\to \omega^*_{\sA/(\sR/p)}$ is an equivalence (see for example \cite[Proposition 4.6]{BLPO-HKR}).
            Finally, the last statement follows from Remark \ref{rmk;multiplicativity}). 
                 	\end{proof}
      \end{lem}
      
      \begin{cor}\label{cor:main_cor2}
      	With the assumptions of Corollary~\ref{cor:main_cor}, the comparisons
      	$\Phi_{ij}$ extend uniquely to natural equivalences in 
        $\Fun\bigl(\Ani(\PreLog_{\sR_0^{\rm sp}}), \mathrm{CAlg}(\mathcal{D}(R)^\wedge_p). \bigr)$ 
      	\[\Phi_{ij} :  
      	R\Gamma_{\rm crys}(-/\sR_i)
      	\xrightarrow{\ \sim\ }
      	R\Gamma_{\rm crys}(-/\sR_j)
      	\]
      	on the category of fine, quasi-compact and quasi-separated log-smooth schemes
      	of Cartier type over the common special fibre. They satisfy the same cocycle
      	and base-change compatibilities as in Corollary~\ref{cor:main_cor}.
      \end{cor}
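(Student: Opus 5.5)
The plan is to globalize the equivalence of Corollary~\ref{cor:main_cor} in two steps. First we pass from polynomial objects to affine log-smooth algebras of Cartier type using Lemma~\ref{lem;leftKanCrys}. Then we pass from affine objects to fine, quasi-compact, quasi-separated schemes using \'etale descent.

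\textbf{Affine case.} Let $\sA_0$ be an affine fine log-smooth pre-log algebra of Cartier type over $\sR_0^{\rm sp}$. Lemma~\ref{lem;leftKanCrys}, applied to each $\sR_i$ (which satisfies \eqref{eq;SN}), says that the natural map
\[
dR\Gamma_{\rm crys}(\sA_0/\sR_i)\longrightarrow R\Gamma_{\rm crys}(\sA_0/\sR_i)
\]
is an equivalence in $\mathrm{CAlg}(\mathcal D(R)^\wedge_p)$, and that the multiplicative structure is also left Kan extended. We then define $\Phi_{ij}(\sA_0)$ as the composite of these identifications with the animated equivalence \eqref{eq:automorphism-transfer} evaluated at $\sA_0$. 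Naturality on $\mathrm{lSm}^{\rm Car}_{\sR_0^{\rm sp}}$ holds because the comparison maps $dR\Gamma\to R\Gamma$ are natural transformations. The cocycle identities and the base-change compatibilities (1) and (2) transfer verbatim: they hold on the animated category by Corollary~\ref{cor:main_cor}, and the identification $dR\Gamma\simeq R\Gamma$ is compatible with Kato's base-change equivalences, since both are left Kan extended from polynomial objects. Uniqueness on affines follows because any extension must restrict to the given transformation on polynomial objects, and both functors are left Kan extended from there.

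\textbf{Globalization.} Both $R\Gamma_{\rm crys}(-/\sR_i)$ and $R\Gamma_{\rm crys}(-/\sR_j)$, restricted to log-smooth schemes of Cartier type over $\sR_0^{\rm sp}$, are \'etale sheaves with values in $\mathrm{CAlg}(\mathcal D(R)^\wedge_p)$. For a qcqs $\sX$ we take a finite affine \'etale hypercover, or equivalently the \v{C}ech nerve of a finite cover by affines with charts, which are again of Cartier type. We then set
\[
\Phi_{ij}(\sX):=\lim_{[n]\in\Delta}\Phi_{ij}(U_n).
\]
To make this canonical, we right Kan extend the natural transformation from the affine site to all qcqs objects. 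Because both functors are sheaves, they agree with their right Kan extensions from affines, so the extension exists and is unique. Since $\mathrm{CAlg}(\mathcal D(R)^\wedge_p)\to\mathcal D(R)^\wedge_p$ preserves limits, the multiplicative structure and the equivalence property pass to the limit. The cocycle identities and the base-change squares are compatible with finite limits: derived $p$-complete base change commutes with finite totalizations, so (1) and (2) hold on $\sX$ as well.

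\textbf{Main obstacle.} The delicate step is the descent claim in the affine case. We must check that the affine objects of $\mathrm{lSm}^{\rm Car}$ covering $\sX$ \'etale-locally really lie in the domain where Lemma~\ref{lem;leftKanCrys} applies. Concretely, we need fine charts that are integral with exact relative Frobenius, and Cartier type should be \'etale local, which we check via the charts as in \cite{ogu}. We must also make sure that Kato's finite-level comparisons, glued via $R\varprojlim_n$, commute with the totalization. The latter point holds because $R\varprojlim_n$ commutes with limits. The former we would handle by citing that Cartier type is \'etale local and preserved by \'etale morphisms.
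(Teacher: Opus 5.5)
Your proposal is correct and follows essentially the same route as the paper: the charted affine case comes from Lemma~\ref{lem;leftKanCrys} combined with Corollary~\ref{cor:main_cor}, and the general case from (strict) \'etale descent along local charts, with the cocycle and base-change compatibilities carried along by naturality. One minor imprecision: the \v{C}ech nerve of an \'etale (as opposed to Zariski) cover does not truncate to a finite limit, so you cannot appeal to $\widehat\otimes^{\mathbb L}_{R,h}T$ commuting with finite totalizations. For property (2) on a non-affine $\sX$, it is cleaner to note that both sides of the base-change square are strict \'etale sheaves in $\sX$ (the source is identified with the target by Kato's global base-change theorem), so commutativity on charted affines already implies commutativity globally.
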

      
      \begin{proof}
      	For charted affine objects (affine, log affine) this follows from Lemma~\ref{lem;leftKanCrys}
      	and Corollary~\ref{cor:main_cor}. The general case follows from strict
      	\'etale descent, using strict \'etale local charts. All compatibilities extend
      	because the constructions and the descent identifications are natural.
      \end{proof}

\section{Globalizing the saturated de Rham Witt complex}\label{sec:global}
In this Section we explain how to globalize the construction of  the saturated log de Rham Witt complex to general quasi-coherent log schemes (i.e., log schemes that, Zariski locally, admit a chart). The procedure is general, but for simplicity we restrict ourselves to the case where the base pre-log ring is
\[\sR =(W(k), [\alpha]\colon N\to W(k)),\]
where $[\alpha]$ is the Teichmuller lift of a log point 
$(k,N)=(k,\alpha:N\to k)$. Note $\sR/p=(k,N)$.
This is done in order to use some result from \cite{Ya}. Note that the globalization procedure in \cite{Ya} makes use of Lemma \cite[A.1]{Ya}, for which a counterexample was constructed by Tsuji (see \cite[Example 3.6.1]{achingeretal}). We use  instead the globalization criterion of
\cite[Section~2.4]{BLMP}.

\begin{prop}\label{WWYcomparison}
The functor of Theorem~\ref{thm;mathcaWomega}:
\[
\mathcal W\uomega^*_{-/\sR}\colon
\mathrm{lPoly}_{(k,N)}\longrightarrow\DA_{W(k)}^{\str}
\]
agrees with Yao's saturated log de Rham--Witt functor \cite[Definition 4.1]{Ya},
$\sP_0\mapsto\mathcal W^Y\omega^*_{\sP_0/(k,N)}$.
\end{prop}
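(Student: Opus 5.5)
We compare the two functors objectwise through the universal properties of the objects involved, and then check naturality via the uniqueness statement in Theorem~\ref{thm;mathcaWomega}. Recall how Yao defines $\mathcal W^Y\omega^*_{\sP_0/(k,N)}$ in \cite[Definition 4.1]{Ya}, following \cite{BLM}: it is the initial object of the category of strict log Dieudonn\'e algebras $\sB^*$ over $\sR$ equipped with a map of pre-log $(k,N)$-algebras $\sP_0\to(B^0/VB^0,L_B)$. Here $\sR$ carries the Teichm\"uller log structure $[\alpha]$, so $N_2=0$. Yao's condition \cite[Definition 3.3(2)]{Ya} requires $\alpha_B\circ\psi=f\circ\alpha$ on the nose, whereas our Definition \ref{def;logDA}(i) only asks for this after reduction to $(B^0/p)_{\red}$.

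The first step is to show that, in the Teichm\"uller situation, the two conditions agree for special strict objects. The argument is Lemma \ref{lem;Flogalgebra} combined with the identification $\upsilon_B\colon B^0\simeq W(B^0/VB^0)$ of \cite[Proposition 3.6.2]{BLM}, as in the proof of Proposition \ref{prop;omegaUniversal}. Since $N_2=0$, every $l$ in $L_B$ satisfies $F(\alpha_B(l))=\alpha_B(l)^p$, so $\alpha_B$ is the Teichm\"uller lift of $\bar\alpha_B$. The computation $(*1)$--$(*4)$ in that proof then gives $f(\alpha(l))=\alpha_B(\psi(l))$ exactly. Consequently the relevant full subcategories of strict special objects, and their morphisms, coincide in the two conventions.

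The second step applies Proposition \ref{prop;omegaUniversal} with $\sA=\sP$, the standard polynomial $F$-lift, together with the adjunction \eqref{eq:WSat_left_adjoint}. This shows that $\mathbf W_{\sR}(\sP_0)=\WSat(\widehat\uomega^*_{\sP/\sR})$, equipped with $e_{\sP_0}$, corepresents
\[
\sB^*\longmapsto \Hom\bigl(\sP_0,((B^0/p)_{\red},L_B)\bigr)=\Hom\bigl(\sP_0,(B^0/VB^0,L_B)\bigr)
\]
on strict special log Dieudonn\'e algebras. This is exactly Yao's universal property, with one caveat: Yao's category need not be restricted to special objects. To handle this we show that $\mathcal W^Y\omega^*_{\sP_0}$ is itself special, which follows from \cite[Theorem 4.?]{Ya}, where it is computed as $\WSat$ of the log de Rham complex of a Frobenius lift with Teichm\"uller log structure. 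Alternatively, a universal object lying in the subcategory automatically represents the restricted functor. Either route yields a unique isomorphism $\mathbf W_{\sR}(\sP_0)\simeq\mathcal W^Y\omega^*_{\sP_0/(k,N)}$ compatible with the maps from $\sP_0$.

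The third step is naturality, which follows from uniqueness. For $u\colon\sP_0\to\sQ_0$, both $\mathbf W_{\sR}(u)$ and Yao's functoriality map are characterized by \eqref{eq;esP0}. Transporting along the isomorphisms of the second step, the two maps must therefore coincide. Forgetting the log data then gives the asserted equality in $\DA^{\str}_{W(k)}$.

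The main obstacle is the first step: one must check carefully that Yao's definition of morphisms, namely strict equality of $\alpha$ together with the monoid being part of the data, matches ours, and that his universal object lies in $\DAlogSs$. This requires confirming that his monoid $L_B$ contains $N$ as a direct summand, so that the decomposition $L_B=L_B^o\oplus N$ demanded by specialness holds.
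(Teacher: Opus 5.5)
Your argument is correct in outline, but it takes a different route from the paper.

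\textbf{The paper's route.} It works with underlying complexes. By \cite[Theorem 4.20]{Ya}, Yao's complex $\mathcal W^Y\omega^*_{\sP_0/(k,N)}$ is quasi-isomorphic to $\widehat\omega^*_{\sP/\sR}$ for a lift $\sP$. By the Cartier criterion \cite[Corollary 2.8.5]{BLM}, the unit $\widehat\omega^*_{\sP/\sR}\to\WSat(\widehat{\uomega}^*_{\sP/\sR})$ is a quasi-isomorphism. Then \cite[7.3.4]{BLM} upgrades this to an identification of strict Dieudonn\'e algebras.

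\textbf{Your route.} You stay in the $1$-category of strict log Dieudonn\'e algebras and compare corepresented functors. On one side you use Proposition \ref{prop;omegaUniversal} together with the adjunction \eqref{eq:WSat_left_adjoint}; on the other, Yao's universal property. You reconcile the two conventions with a correct observation: when $N_2=0$, every strict special object is of Teichm\"uller type via $\upsilon_B$. Hence for a morphism between such objects, $\upsilon_{B'}\circ f=W(\bar f)\circ\upsilon_B$, and the relaxed condition (i) of Definition \ref{def;logDA} forces Yao's strict equality $f\circ\alpha_B=\alpha_{B'}\circ\psi$.

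\textbf{Comparison.} Your approach gives a canonical isomorphism compatible with the structure maps from $\sP_0$. It also gives naturality in $\sP_0$ for free from uniqueness, which the paper's short argument does not spell out. It avoids Yao's comparison theorem with the de Rham complex of a lift, needing only Yao's universal property. The paper's approach is shorter, and it directly identifies both sides with $\widehat\omega^*_{\sP/\sR}$, which is the form used later.

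\textbf{Two clean-ups.}
\begin{enumerate}
\item The specialness of $\mathcal W^Y\omega^*_{\sP_0/(k,N)}$ does not need a theorem. By construction it is $\WSat$ of the relative log de Rham complex of $W(P_0)$ with the Teichm\"uller log structure. Its monoid is that of $\sP_0$, namely $\N^m\oplus N$, and Teichm\"uller lifts satisfy $F([x])=[x]^p$. Cite that construction instead of the placeholder ``Theorem 4.?''.
\item Your ``alternatively'' is not an independent argument. An object universal for the whole category corepresents the restricted functor only if it lies in the subcategory, and that is exactly the specialness in point 1.
\end{enumerate}
Also check that the universal property you quote from \cite{Ya} uses maps of pre-log $(k,N)$-algebras with monoid maps over $N$. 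That is what matches the right-hand side of Proposition \ref{prop;omegaUniversal}.
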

    \begin{proof}
       When $\sR$ has log structure given by the Teichmuller lift, \cite[Theorem 4.20]{Ya} implies that there is a quasi-isomorphism $\widehat{\omega}_{\sP/\sR}^* \simeq \mathcal{W}^Y\omega_{\sP/p/(k,N)}^*$ for any log polynomial algebra $\sP$ over $\sR$ lifting $\sP/p$ over $\sR/p=(k,N)$ and  $\widehat{\omega}^*_{\sP/\sR}\to \WSat(\uomegaSshat{\sP})$ is a quasi-isomorphism by \cite[Corollary 2.8.5]{BLM}.  This completes the proof in view of \cite[7.3.4]{BLM}. 
    \end{proof}

We write
\[
L\mathcal W\uomega^*_{-/\sR}\colon
\mathrm{Ani}(\mathrm{PreLog}_{(k,N)})
\longrightarrow
\CAlg(\mathcal D(W(k))^\wedge_p)
\]
for the sifted left Kan extension of
$\mathcal W\uomega^*_{-/\sR}$ from polynomial objects.
 
\begin{rmk}\label{rmk:modp_isdeRham} Consider the canonical map $\mathcal{D}(W(k))^{\wedge}_p \to \mathcal{D}(k)$ given by the reduction modulo $p$.
There is a canonical natural equivalence
\[
L\mathcal W\uomega^*_{-/\sR}/p
\simeq
L\omega^*_{-/(k,N)}
\]
of functors
$\mathrm{Ani}(\mathrm{PreLog}_{(k,N)})\to\mathcal D(k)$, where the
right-hand side is the derived log de Rham complex. Indeed, both sides
preserve sifted colimits, so it is enough to check the claim on
$\mathrm{lPoly}_{(k,N)}$. For a polynomial object $\sP_0$, the canonical
map
\[
\omega^*_{\sP_0/(k,N)}
\longrightarrow
\mathcal W_1\uomega^*_{\sP_0/\sR}
\]
is an isomorphism by \cite[Theorem~5.8]{Ya}, while
\[
\mathcal W\uomega^*_{\sP_0/\sR}/p
\longrightarrow
\mathcal W_1\uomega^*_{\sP_0/\sR}
\]
is a quasi-isomorphism by \cite[Corollary~2.7.2]{BLM}; see also
\cite[Corollary~5.9]{Ya}.
\end{rmk}

\begin{lem}[{See \cite[Theorem 4.33]{Ya}}]\label{lem:descent} For every $\sA=(A,L)\in \mathrm{PreLog}_{(k,N)}$,
    the functor $L\mathcal{W}\uomega^*_{-/\sR}$. restricted to the  category of $A$-algebras, given as $B\mapsto L\mathcal{W}\uomega^*_{(B,L)/\sR}$, satisfies étale descent.
    \begin{proof}
        Since $L\mathcal{W}\uomega^*_{-/\sR}$ takes value in a category of $p$-completed modules, and since this category is stable, we can check the étale descent by working modulo $p$. By Remark \ref{rmk:modp_isdeRham}, the functor now agrees with $B\mapsto \omega^*_{ (B,L)/(k,N)}$ and the statement is clear. 
    \end{proof}
\end{lem}
\begin{lem}\label{lem:log_inv}
    For every $(A,L)\in \mathrm{PreLog}_{(k,N)}$, the natural map $L\mathcal{W}\uomega^*_{(A,L)/\sR} \to L\mathcal{W}\uomega^*_{(A,L)^a/\sR}$ is an equivalence, where $(A,L)^a$ denotes the associated log structure (in other words, the functor $L\mathcal{W}\uomega^*_{-/\sR}$ restricted to ordinary pre-log rings is invariant under logification). 
    \begin{proof} Once again using Remark \ref{rmk:modp_isdeRham}, it suffices to show the lemma after reducing modulo $p$. Then, it follows from the analogous statement for the derived log de Rham complex $L\omega^*_{-/(k,N)}$ which is proved, for example, in \cite[Lemma 3.11]{BLPO-HKR}. 
    \end{proof}
\end{lem}
For the globalization we will use the following general result taken from \cite[Section 2.4]{BLMP}. Let $\mathcal{C}$ be an $\infty$-category which is both complete and cocomplete, and let $\mathcal{F}\in \PSh(\mathrm{PreLog}^{op}, \mathcal{C})$. We say that $\mathcal{F}$ is \emph{globalizable} if the unit morphism 
\[\alpha_{\mathcal{F}}\colon \mathcal{F}\to \Gamma_* L_{s\acute{e}t}\Gamma^* \mathcal{F
}\]
is an equivalence, where $L_{s\acute{e}t}$ is the strict étale sheafification functor from $\PSh(\mathrm{lSch}^{qcoh}, \mathcal{C})$ to $\mathrm{Shv}_{s\acute{e}t}(\mathrm{lSch}^{qcoh}, \mathcal{C})$, and $(\Gamma_*,\Gamma^*)$ is the adjoint pair induced by the global section functor $\Gamma(-)$, sending a log scheme $\sX = (X, \mathcal{M}_X)$ to the pre-log ring $(\Gamma(X,\mathcal{O}), \Gamma(X,\mathcal{M}_X))$ (see \cite[Definition 2.19]{BLMP}). 
Equivalently, there exists a strict étale sheaf $\mathcal{G}$ on quasi-coherent log schemes such that its restriction to the spectrum of any pre-log ring agrees with $\mathcal{F}$. 
\begin{prop}[{\cite[Proposition 2.20]{BLMP}}]
\label{prop_global}    With the above notation,
assume that $\sF$ satisfies the following conditions:
\begin{enumerate}
\item[(i)] $\sF$ preserves filtered colimits as a functor $\mathrm{PreLog}\to \sC$.
\item[(ii)] For every pre-log ring $(A,M)$,
the presheaf
\[
(A\to B)\mapsto \sF(B,M)
\]
on the opposite category of $A$-algebras is an \'etale sheaf.
\item[(iii)] For every pre-log ring $(A,M)$ such that $A$ is strictly local, the induced morphism $\sF(A,M)\to \sF(A,M^a)$ is an isomorphism,
i.e., we have the ``logification invariance'' when the underlying ring is strictly local.
\end{enumerate}
Then $\sF$ is globalizable.
\end{prop}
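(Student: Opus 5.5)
The idea is to write $\Gamma_*L_{s\acute{e}t}\Gamma^*\sF$ as an explicit limit over a site of ``local'' pre-log rings, and to reduce the comparison with $\sF$ in three steps. First use (i) to pass to strictly local rings. Then use (iii) to replace a pre-log structure by its logification. Finally use (ii) to recover the value on an arbitrary pre-log ring from étale covers of its underlying ring, keeping the monoid $M$ fixed. Since the statement is quoted from \cite[Proposition 2.20]{BLMP}, I would reproduce that argument rather than find a new one.

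\textbf{Step 1: a sheaf extending $\sF$.} On a quasi-coherent log scheme $\sX=(X,\mathcal M_X)$, take the strict étale site of affines $U=\Spec A$ that carry a chart $M\to \Gamma(U,\mathcal M_U)$. Define $\mathcal G(\sX)$ as the limit of $\sF(A,\Gamma(U,\mathcal M_U))$ over this site. To see that this is independent of choices, use (iii) together with (i). Two charts $M\to \mathcal M$ and $M'\to\mathcal M$ of the same log structure induce the same associated log structure. Their stalks at a geometric point are filtered colimits of pre-log rings over strictly local rings, so by (i) and (iii) the two values of $\sF$ agree stalkwise. The sheaf property (ii) then promotes this stalkwise agreement to an equivalence of the values on $U$. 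Next, on the strict étale site of a fixed chart the presheaf $B\mapsto\sF(B,M)$ is an étale sheaf by (ii). Hence $\mathcal G$ is a strict étale sheaf, and it coincides with $L_{s\acute{e}t}\Gamma^*\sF$ on the charted objects.

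\textbf{Step 2: the unit is an equivalence.} For a pre-log ring $(A,M)$, the scheme $\Spec(A,M)^a$ is charted by $M$ itself. So $\Gamma_*\mathcal G(A,M)$ is the limit over étale $A$-algebras $B$ of $\sF(B,M)$, possibly with $M$ replaced by $\Gamma(\Spec B,\mathcal M)$. By (ii) the limit over étale $B$ with the fixed monoid $M$ is just $\sF(A,M)$. The remaining point is to identify $\sF(B,M)$ with $\sF(B,\Gamma(\mathcal M))$. This follows from the chart-independence proved in Step 1, again by checking on stalks via (i) and (iii) and gluing via (ii). Thus the unit $\alpha_\sF$ is an equivalence.

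\textbf{Main obstacle.} The main difficulty is the chart-independence step. Condition (iii) applies only to strictly local rings, so one has to justify that $\sF$ is determined by its values at strictly henselian stalks. I would argue this as follows. Because $\sC$ is stable-like and complete, descent by (ii) yields a hypercompletion-free limit over a cover. For qcqs affine schemes, étale hypercovers can be replaced by Čech covers, and the stalks are filtered colimits, where (i) applies. I expect that a precise proof needs either finite-dimensionality, or the observation that in our application $\mathcal C$ consists of $p$-complete objects and everything can be checked modulo $p$, as in Lemmas~\ref{lem:descent} and \ref{lem:log_inv}. Following \cite{BLMP}, I would do the comparison on Čech nerves of strict étale covers, which avoids points entirely, and use (iii) only through a filtered-colimit reduction.
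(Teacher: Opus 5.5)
The paper gives no argument for this proposition; it quotes \cite[Proposition 2.20]{BLMP}. Your sketch therefore has to stand on its own, and as written it does not.

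\textbf{Step 1 is circular.} A charted affine $U$ is the terminal object of its own strict \'etale site, so your limit $\mathcal G(U)$ is simply $\Gamma^*\sF(U)=\sF(\mathcal O(U),\Gamma(U,\mathcal M_U))$. Claiming that $\mathcal G$ is a sheaf agreeing with $L_{s\acute{e}t}\Gamma^*\sF$ on charted objects is therefore the whole content of the proposition. The mechanism you offer for it is also wrong. Condition (ii) is descent for a \emph{fixed} monoid, so it says nothing about $B\mapsto\sF(B,\Gamma(\Spec B,\mathcal M))$, where the monoid varies with $B$. Moreover, ``chart independence'' is neither well posed nor needed: there is no map between $\sF(B,M)$ and $\sF(B,M')$ for unrelated charts.

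\textbf{The clean reduction.} Fix $(A,M)$ and put $X=\Spec(A,M)$.
\begin{itemize}
\item The small strict \'etale site of $X$ is the \'etale site of $\Spec A$, with every object carrying the pre-log structure $M$, and sheafification commutes with restriction to it.
\item The unit factors through the map from the sheaf $\sF(-,M)$ (a sheaf by (ii)) to the presheaf $\Gamma^*\sF|_X\colon B\mapsto\sF(B,\Gamma(\Spec B,M^a))$.
\item By (i), and since filtered colimits of pre-log rings are computed componentwise, the stalk of this map at a geometric point $\bar x$ is $\sF(\mathcal O^{\rm sh}_{\bar x},M)\to\sF(\mathcal O^{\rm sh}_{\bar x},\mathcal M_{\bar x})$.
\item Here $\mathcal M_{\bar x}$ is the logification of $M$ over the strictly local ring $\mathcal O^{\rm sh}_{\bar x}$, so (iii) makes this an equivalence.
\end{itemize}
This is what your Step 2 is reaching for. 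Only the tautological chart $M$ ever enters.

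\textbf{The genuine gap.} You must pass from ``stalkwise equivalence'' to ``$\sF(-,M)\to L_{s\acute{e}t}(\Gamma^*\sF|_X)$ is an equivalence''. For $\sC$-valued sheaves in the $\infty$-categorical sense, this requires that geometric points detect equivalences. That is a hypercompleteness property of the \'etale $\infty$-topos of $\Spec A$. It is not provided by (i)--(iii), and it is not automatic for an arbitrary (for instance infinite-dimensional) affine scheme.

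The remedies in your ``main obstacle'' paragraph do not close this gap:
\begin{itemize}
\item On a non-hypercomplete topos, \v{C}ech descent does not yield hyperdescent, so you cannot ``replace hypercovers by \v{C}ech covers''.
\item A \v{C}ech-nerve argument that ``avoids points'' cannot use (iii) at all. Strictly local rings only arise as filtered colimits over \'etale neighbourhoods, that is, as stalks.
\item The mod $p$ reduction of Lemmas~\ref{lem:descent} and~\ref{lem:log_inv} concerns one specific $\sF$, not the general statement.
\end{itemize}
To finish, you must either add an explicit input that makes the stalk criterion valid (work with hypersheaves, or impose a finiteness hypothesis guaranteeing hypercompleteness), or give a proof of the local equivalence that genuinely does not go through points. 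As it stands, the argument stops exactly at its only non-formal step.
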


\begin{thm}\label{thm;sWomega-glabel}
    There exists a unique strict étale sheaf $L\mathcal{W}\uomega^*_{-/\sR}$ of $p$-complete $W(k)$-modules on $\mathrm{lSch}^{qcoh}_{(k,N)}$ such that for each $\sX$ and each étale local chart $(A,L)\in \PreLog_{(k,N)}$ of $\sX$, the natural map  $L\mathcal{W}\uomega^*_{(A,L)/\sR} \to \Gamma(U, L\mathcal{W}\uomega^*_{/\sR})$ is an equivalence, where $U\in \mathrm{lSch}^{qcoh}_{(k,N)}$ is associated to the pre-log scheme $(\Spec(A),L)$. 
    \begin{proof}
By definition,
$L\mathcal W\uomega^*_{-/\sR}$ is a sifted left Kan extension. It
therefore preserves sifted colimits, and in particular filtered
colimits. This verifies condition~\textup{(i)} of
Proposition~\ref{prop_global}. Condition~\textup{(ii)} is
Lemma~\ref{lem:descent}, and condition~\textup{(iii)} follows from
Lemma~\ref{lem:log_inv}. Proposition~\ref{prop_global} now gives the
claimed strict-\'etale sheaf and its uniqueness.
\end{proof}
\end{thm}

\section{An integral Hyodo-Kato isomorphism}\label{integralHK}
Let $K$ be a complete discrete valuation field of mixed characteristic $(0,p)$
with valuation ring $\sO_K$ and residue field $k$.
We assume that $k$ is perfect, normalize the valuation by
$v_K(\pi)=1$, and write $e=v_K(p)$ for the absolute ramification
index.
In what follows, we fix a prime element $\pi\in\sO_K$.
Let $W=W(k)$ be the ring of Witt vectors and $\phi:W\to W$ the Witt-vector
Frobenius.
Consider the pre-log rings
\[
\uW^0:=(W,\N\to W;1\mapsto0),\qquad
\uk^0=(k,\N\to k;1\mapsto0),\qquad
\OKpi=(\OK,\N\to\OK;1\mapsto\pi).
\]
Note that $\uk^0=\uW^0/p=\OKpi/(\pi)$.

For $m\geq0$ with $e\leq p^m$, put
\[
\OK^{\pi^{p^m}}
:=(\OK,\N\to\OK;1\mapsto\pi^{p^m}).
\]
Let
\[
g_m:\uk^0\longrightarrow\uk^0
\]
be induced by $\phi^m$ on $k$ and the identity on $\N$, and let
\[
h_m:\OK^{\pi^{p^m}}\longrightarrow\OKpi
\]
be the identity on the underlying ring and multiplication by $p^m$ on $\N$.
Since $\pi^{p^m}\in p\OK$, the $p^m$-power map on $\OK/p$ factors through
$\OK/(\pi)=k$. Hence, the  $m$-fold Frobenius
\[
F_m:\uk^0\longrightarrow\OKpi/p
\]
is the composite of $g_m$, the natural map
$\uk^0\to\OK^{\pi^{p^m}}/p$, and $h_m$ modulo $p$.
For a log scheme $\sX\to\Spec(\uk^0)$, put
\begin{equation}\label{eq;XOKm}
	\FttOK\sX m
	:=\sX\times_{\uk^0,F_m}\OKpi/p.
\end{equation}
We also write
\begin{equation}\label{eq;Xm}
\Ftt\sX m:=\sX\times_{\uk^0,g_m}\uk^0.
\end{equation}

For a fine $p$-adic log scheme
$\fX\to\Spec(\OKpi)$ as below, let
$\widehat\omega^*_{\fX/\OKpi}$ denote the $p$-adically completed
relative log de Rham complex, regarded as an object of
$\mathcal D(\sX_{\acute et},\OK)^\wedge_p$, where
$\sX=\fX\times_{\OKpi}\uk^0$.  We set
\begin{equation}\label{eq;sheafwise-Leta-dR}
	R\Gamma_{\dR}^{\eta^m}(\fX/\OKpi)
	:=
	R\Gamma\!\left(
	\sX_{\acute et},
	L\eta_p^m\widehat\omega^*_{\fX/\OKpi}
	\right).
\end{equation}


\begin{lem}\label{lem;CrysDRComparisontwisted}
	Let $m\geq0$ with $e\leq p^m$, and let
	$\tsX\to\Spec(\OKpi)$ be a fine, quasi-compact and quasi-separated
	log-smooth morphism of Cartier type. Put
	\[
	  \sX=\tsX\times_{\OKpi}\uk^0.
	\]
	Then, there is a natural equivalence
	\[
	\xi_{\sX}:R\Gamma_{\rm crys}(\FttOK\sX m/\OKpi)
	\xrightarrow{\ \sim\ }
	R\Gamma_{\dR}^{\eta^m}(\tsX/\OKpi)
	\]
	in $\mathcal D(\OK)^\wedge_p$ so that, for any morphism
	$f:\tsY\to\tsX$ of such log schemes over $\Spec(\OK^\pi)$, the diagram
	\[
	\xymatrix{
		R\Gamma_{\rm crys}(\FttOK\sX m/\OKpi)
		\ar[r]^-{\xi_{\sX}}\ar[d]^{f^*}
		&R\Gamma_{\dR}^{\eta^m}(\tsX/\OKpi)\ar[d]^{f^*}\\
		R\Gamma_{\rm crys}(\FttOK\sY m/\OKpi)
		\ar[r]^-{\xi_{\sY}}
		&R\Gamma_{\dR}^{\eta^m}(\tsY/\OKpi)
	}
	\]
	commutes.
\end{lem}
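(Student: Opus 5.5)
The plan is to construct $\xi_{\sX}$ étale-locally on $\sX$ from an explicit map of de Rham complexes, identify that map with a crystalline pullback so that it is independent of choices, and then glue. Both sides are étale sheaves on $\sX_{\acute et}$. For the crystalline side this is étale descent for $R\Gamma_{\rm crys}$, using that $\sX$ and $\FttOK\sX m$ have the same étale site, since $F_m$ is a universal homeomorphism. For the de Rham side, $L\eta_p$ commutes with flat (in particular étale) base change of $p$-torsion-free complexes, so $L\eta_p^m\widehat\omega^*_{\fX/\OKpi}$ is the restriction of a sheaf whose sections on an affine étale $\fU$ are $L\eta_p^m\widehat\omega^*_{\fU/\OKpi}$. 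It therefore suffices to build a natural map on affine, log-affine charted opens $\fX=\Spf(A)$, functorially in such opens, and to prove that it is an equivalence there. The map will be a "divided $m$-fold relative Frobenius", modelled on the construction of \eqref{eq;crysfrobomega}.

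\textbf{Step 1 (the Frobenius map).} Since $e\le p^m$, the $p^m$-power map on $\OK/p$ factors through $k$. Hence the $m$-fold absolute Frobenius of $\fX_1:=\fX\times_{\OKpi}\OKpi/p$, which is $p^m$ on monoids, factors through $\sX$. Base changing the induced map along $F_m$ gives a morphism of log schemes
\[
\Phi_m\colon \fX_1\longrightarrow \FttOK\sX m
\]
over $\OKpi/p$. Crystalline pullback along $\Phi_m$, followed by Kato's comparison \eqref{eq;omegalogcryscomparison} for the lift $\fX$ of $\fX_1$, gives a natural map
\[
R\Gamma_{\rm crys}(\FttOK\sX m/\OKpi)\longrightarrow R\Gamma(\sX_{\acute et},\widehat\omega^*_{\fX/\OKpi}).
\]
By crystalline functoriality and Lemma~\ref{lem;Kato_fixed_base_lift_independence}, this map is natural in $\fX$. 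To compute it locally, choose a log-smooth lift $\fX'$ of $\FttOK\sX m$ over $\OKpi$ (for instance, étale-locally, the lift of the chart with the log variables replaced as in Remark~\ref{rmk;Flogalgebra}) and a lift $\widetilde\Phi\colon\fX\to\fX'$ of $\Phi_m$. The map is then $\widetilde\Phi^*\colon\widehat\omega^*_{\fX'/\OKpi}\to\widehat\omega^*_{\fX/\OKpi}$.

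\textbf{Step 2 (divisibility).} We must show that $\widetilde\Phi^*$ factors through the subcomplex $\eta_p^m\widehat\omega^*_{\fX/\OKpi}$, that is, through $\{x\in p^{mi}\widehat\omega^i : dx\in p^{m(i+1)}\widehat\omega^{i+1}\}$; this is the obstacle. For $d\log$ of chart monoid elements we get $p^m\,d\log$, which is fine. For a function, however, a lift has the form $x^{p^m}+py$ with $d(py)$ only divisible by $p$, so naive divisibility fails when $m>1$. What I would try first is to factor $\Phi_m$ as a composite of $m$ single relative Frobenii through the intermediate twists $\fX_1\to\cdots\to\FttOK\sX m$, each lifted compatibly. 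Each single step then gives, exactly as before \eqref{eq;crysfrobomega}, a map into $L\eta_p$ of the previous complex. Composing and using that $L\eta_p$ is a functor yields a map into $L\eta_p^m$, avoiding the need for simultaneous $p^{mi}$-divisibility. The hypothesis $e\le p^m$ is needed only to make the last step land on $\FttOK\sX m$ rather than on a twist over $\OK^{\pi^{p^j}}$. If intermediate lifts over $\OKpi$ are not available, I would fall back on the PD-structure on $(p)\subset\OK$: the kernel of $\OK\to k$ raised to the $p^m$-th power lands in $p\OK$, which can be used to control the divided powers appearing in $d\widetilde\Phi^*$.

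\textbf{Step 3 (equivalence).} Both sides are derived $p$-complete, so it suffices to prove the claim modulo $p$. By \cite[Proposition 2.4.6]{BLM}, $\eta_p^m(C)/p$ is built from iterated cohomology with Bockstein differentials, and each single step is, modulo $p$, the inverse log Cartier isomorphism \eqref{eq;Cartier}. This uses that $\fX_1$ is of Cartier type over $\OKpi/p$, and that the twists are again of Cartier type by base change. Iterating $m$ times gives the quasi-isomorphism. Naturality in $f\colon\fY\to\fX$ holds because the map is, at every stage, a crystalline pullback composed with Kato's comparison, and these are independent of lifts by Lemma~\ref{lem;Kato_fixed_base_lift_independence}. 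The same independence of lifts ensures that the local maps glue on overlaps.
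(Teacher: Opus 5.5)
Your proposal is correct and follows essentially the paper's argument. The paper identifies $\FttOK\sX m$ with the $m$-fold Frobenius twist of $\fX_p=\fX\times_{\OKpi}\OKpi/p$ over $\OKpi/p$, iterates the single-step divided relative Frobenius quasi-isomorphism of \cite[Lemma~2.25]{HK} $m$ times (this is your Step~2), and then applies Kato's comparison, $L\eta_p^m$ and $R\Gamma(\sX_{\acute et},-)$. The main difference is that the paper states each step directly for $Ru^{\log}_*\mathcal O_{\rm crys}$ of the twists on the log crystalline topos, which is how it avoids lift choices and the gluing of local maps in your write-up; your PD-structure fallback is not needed.
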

\begin{proof}
	Let  $\tsXp=\tsX\times_{\OKpi}\OKpi/p$.
	 Then, the   factorization of the  Frobenius above gives a canonical
	identification
	\[
	\FttOK\sX m
	\simeq
	\tsXp\times_{\OKpi/p,F^m_{\OKpi/p}}\OKpi/p.
	\]
	The divided-relative-Frobenius construction in the proof of
	\cite[Proposition~2.24]{HK}, and specifically the quasi-isomorphism of
	\cite[Lemma~2.25]{HK}, identifies 
the pullback by relative Frobenius of the crystalline complex on the logarithmic crystalline topos with its $\eta_p$-subcomplex. 
Iterating the construction $m$ times and passing to the derived inverse
	limit over the finite PD-bases gives a natural equivalence in the derived
	category of sheaves on $\sX_{\acute et}$,
	\[
	R u^{\log}_{\FttOK\sX m/\OKpi,*}\mathcal O_{\rm crys}
	\xrightarrow{\ \sim\ }
	L\eta_p^m
	R u^{\log}_{\tsXp/\OKpi,*}\mathcal O_{\rm crys}.
	\]
	  Kato's 
	crystalline--de Rham comparison \cite[(6.4)]{katolog},
	 applied at every finite level and then
	$p$-adically completed, identifies
	\[
	R u^{\log}_{\tsXp/\OKpi,*}\mathcal O_{\rm crys}
	\xrightarrow{\ \sim\ }
	\widehat\omega^*_{\tsX/\OKpi}.
	\]
	Applying $L\eta_p^m$ in the derived category of sheaves and then taking
	derived global sections gives the asserted equivalence. Both constructions
	are functorial, completing the proof. 
\end{proof}

\begin{thm}\label{thm.integralHK}Fix a uniformizer $\pi$ as above.
	Assume $e\leq p^m$.
	For a fine, quasi-compact and quasi-separated
	log-smooth morphism $\sX\to\Spec(\uk^0)$ of Cartier type, there is a natural
	equivalence  in $\mathrm{CAlg}(\mathcal{D}(\OK)^\wedge_p)$  
	\[
	\rho_{\pi,m}^{\rm int}\colon R\Gamma_{\rm crys}(\sX/\uW^0)
	\widehat\otimes^{\mathbb{L}}_{W,\phi^m}\OK
	\simeq
	R\Gamma_{\rm crys}(\FttOK\sX m/\OKpi),
	\]
	functorial for morphisms of such log schemes over $\uk^0$.
\end{thm}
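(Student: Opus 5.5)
We plan to reduce the statement to Corollary~\ref{cor:main_cor2}, applied over a base ring with invertible Frobenius on which both $\uW^0$ and $\OK^{\pi^{p^m}}$ can be realised, up to a congruence modulo $p$. The natural choice is $R=W$ with $F_R=\phi$, which is an automorphism because $k$ is perfect, and $N=\N$. On $W$ we take the two pre-log structures
\[
\alpha_0\colon 1\mapsto 0,\qquad \alpha_1\colon 1\mapsto p,
\]
both with $N_2=\N$ and $N_1=0$, so that \eqref{eq;SN} holds trivially; their reductions modulo $p$ agree. This yields a natural equivalence
\[
\Phi_{01}\colon R\Gamma_{\rm crys}(-/\uW^0)\simeq R\Gamma_{\rm crys}(-/W^\times)
\]
in $\CAlg(\mathcal D(W)^\wedge_p)$, defined on fine, quasi-compact and quasi-separated log smooth schemes of Cartier type over $\uk^0$. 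It is functorial and compatible with base change in the sense of property~(1).

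Next we pass to $\OK$ along $W\xrightarrow{\phi^m}W\to\OK$. The composite is not Frobenius-equivariant into $\OK$, so we do not apply property~(1) directly. Instead we use property~(2). Consider the log PD-base $\sT=(\OK,\N\ni1\mapsto\pi^{p^m})$ with PD ideal $(p)$ and ring map $h\colon W\to\OK$. We need $h\circ\alpha_i=\beta$ for both $i$, but $h(p)\neq\pi^{p^m}$, so this also fails as stated. The fix is to choose the second base structure on $W$ differently.

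Since $\pi^{p^m}\in p\OK$, write $\pi^{p^m}=pu$ with $u\in\OK$. Equivalently, one can use a base $R$ larger than $W$ on which Frobenius is still bijective and which maps to $\OK$ carrying an element to $\pi^{p^m}$. The cleanest option seems to be to apply Corollary~\ref{cor:main_cor2} with $R=W$, keep $\alpha_0$, and take the second structure to be $1\mapsto\phi^{-m}$ applied to a lift, in $W$, of the image of $\pi^{p^m}$. If no such lift exists, we extend scalars along property~(1) to a perfect $p$-complete $\delta$-ring mapping to $\OK$, for example $A_{\inf}$-type rings, or $W(k)\langle T^{1/p^\infty}\rangle$ with $T\mapsto\pi^{p^m}$. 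We expect this choice of an auxiliary base, with Frobenius automorphism, through which both specializations factor compatibly with congruence modulo $p$, to be the main obstacle.

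Concretely, we would take $R=W\langle T^{1/p^\infty}\rangle$ (completed), with $\phi$ on $W$, $T\mapsto T^p$, and log structures $1\mapsto 0$ and $1\mapsto pT$. Both satisfy \eqref{eq;SN} with $N_2=\N$, and they agree modulo $p$. The map $h\colon R\to\OK$ is given by $\phi^m$ on $W$ and $T^{1/p^n}\mapsto$ compatible roots of $u$ in a suitable extension; if such roots are unavailable we again pass through a perfectoid cover and descend, using fpqc base change for crystalline cohomology.

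We then apply property~(2) twice. The first application uses the specialization $T\mapsto0$ together with $h_0\colon R\to W\xrightarrow{\phi^m}W$. This identifies the hollow side with $R\Gamma_{\rm crys}(\sX/\uW^0)\widehat\otimes^{\mathbb L}_{W,\phi^m}\OK$, via crystalline base change of $\sX$ along $g_m$, which produces $\Ftt\sX m$. The second application uses the specialization to $\OK^{\pi^{p^m}}$. This identifies the other side with
\[
R\Gamma_{\rm crys}\bigl(\Ftt\sX m\times_{\uk^0}\OK^{\pi^{p^m}}/p\;/\;\OK^{\pi^{p^m}}\bigr).
\]

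Finally we push along $h_m\colon\OK^{\pi^{p^m}}\to\OKpi$. Multiplication by $p^m$ on $\N$ is an isomorphism after inverting nothing only at the level of the associated log schemes over $\OKpi/p$, and the base change of the crystalline complex along $h_m$ is exactly the definition of $\FttOK\sX m$. Kato's base-change theorem then gives the final identification. Multiplicativity and naturality are inherited from Corollary~\ref{cor:main_cor2} and from the base-change maps.
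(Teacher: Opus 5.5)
Your architecture matches the paper's: an auxiliary $p$-complete base with bijective Frobenius carrying two pre-log structures that agree modulo $p$, then the transfer of Corollary~\ref{cor:main_cor2}, then specialization, then $h_m$. Over $\OC$, your base $W\langle T^{1/p^\infty}\rangle$ with $1\mapsto pT$ is the special case $a=[u^\flat]$ of the paper's $\Ainf$-construction, via $T\mapsto[u^\flat]$ and property (1), where $u^\flat\in\OC^\flat$ is a compatible system of $p$-power roots of $u$.

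\textbf{The specialization step uses the wrong tool.} Property (2) of Corollary~\ref{cor:main_cor} applies only to a ring map $h$ with $h\circ\alpha_0=h\circ\alpha_1$, and its conclusion is that the specialized comparison is the identity.
\begin{enumerate}
\item It cannot be applied at $T\mapsto u$, since $h(0)=0\neq\pi^{p^m}=h(pT)$.
\item Applying it at $T\mapsto 0$ only shows that $\Phi_{01}$ becomes the identity on the hollow side, which says nothing about the $\pi^{p^m}$ side.
\item Specializing source and target along two different ring maps gives you no map between them at all.
\end{enumerate}
Instead, base change the single equivalence $\Phi_{01}$ along one map $h_u\colon R\to\OC$, $T^{1/p^n}\mapsto u^{1/p^n}$. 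Then identify the source and target separately by Kato's base change along $\uW^0\to\sR_0\to\OC^0$ and along $\sR_1\to\OC^{\pi^{p^m}}$; these are steps $(*2)$ and $(*4)$ in the proof of Theorem~\ref{thm.integralHK-OC}. Property (2) is needed only to show independence of auxiliary choices.

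\textbf{The genuine gap is the passage to $\OK$.} A map $h_u$ with target $\OK$ needs a compatible system of $p$-power roots of $u=\pi^{p^m}/p$ in $\OK$, which usually does not exist. For example, for $K=\Q_p(\zeta_p)$, $p$ odd and $m=1$, one has $v_K(u)=p^m-e=1$. So the comparison can only be built over $\OC$ and must be descended, and this is the substantial part of the proof. Base change identifies source and target over each $B_n=\OC\widehat\otimes_{\OK}\cdots\widehat\otimes_{\OK}\OC$, but a map over $\OC$ descends only if it comes with coherent descent data. You would have to:
\begin{enumerate}
\item[(a)] construct the comparison over every $B_n$; since $B_n$ is not perfectoid, the paper uses $A_n=W(B_n^\flat)$ with $\theta_n$ and a lift $a_n$ of $u$;
\item[(b)] prove independence of the auxiliary choice. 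For you this is the root system of $u$, which is unique only up to $p$-power roots of unity, and the two tensor factors of $B_1$ give two different ones. Use the cocycle identity $\Phi_{0,pT'}\simeq\Phi_{pT,pT'}\circ\Phi_{0,pT}$ over a two-parameter base such as $W\langle T^{1/p^\infty},T'^{1/p^\infty}\rangle$, plus property (2) at a map sending both $pT$ and $pT'$ to $\pi^{p^m}$;
\item[(c)] check compatibility with all faces and degeneracies.
\end{enumerate}
Only then does faithfully flat descent for derived $p$-complete modules produce $\rho^{\rm int}_{\pi,m}$ over $\OK$ and show it is an equivalence. Your base does have one advantage: every $B_n$ already contains the needed roots (images from any tensor factor $\OC$), so (a) needs no $W(B_n^\flat)$. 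But (b) and (c) are exactly what your proposal leaves out.
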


\begin{rmk}\label{rem.integralHK}
	
	If $\OK=W$, $\pi=p$, and $m=0$, Theorem~\ref{thm.integralHK} gives the
	unramified comparison
	\[
	R\Gamma_{\rm crys}(\sX/W^0)
	\xrightarrow{\ \sim\ }
	R\Gamma_{\rm crys}(\sX/W^\times),
	\qquad
	W^\times=(W,\N\ni1\mapsto p).
	\]
	This of course  is given directly by 
	Corollary~\ref{cor:main_cor2} applied over $W$, together with the identification with log-crystalline cohomology.
	
\end{rmk}
 \begin{rmk}\label{rmk:uniformizer-dependence}
The naturality in Theorem~\ref{thm.integralHK} is  in $\sX$ for a
fixed uniformizer $\pi$. The proof below shows that
$\rho_{\pi,m}^{\rm int}$ is independent of  auxiliary lifts of $\pi$ chosen in
$A_{\inf}(\OC)$ with the ring $\OC$ of integers in the completion of an algebraic closure $C$ of $K$
and on the \v{C}ech nerve, but the map itself still depends on a chosen uniformizer $\pi$. The canonical unramified comparison of Remark \ref{rem.integralHK}, defined directly from the two
parameters $0$ and $p$ is, on the other hand, canonical (there are no other choices involved).
\end{rmk}
The proof is obtained by descent. To this end, we will need to use  the base-change compatibilities  recorded in Corollary~\ref{cor:main_cor2}. In particular, if two
base log structures become equal after a morphism of log PD-bases, the
base-changed comparison is canonically the identity.  
		In order to get the proof, we first base change to the ring of integers $\mathcal{O}_C$ of the completion of an algebraic closure of $K$.

\begin{thm}\label{thm.integralHK-OC}Fix a uniformizer $\pi$ as above.
	Assume $e\leq p^m$.
	For a fine, quasi-compact and quasi-separated
	log-smooth morphism $\sX\to\Spec(\uk^0)$ of Cartier type, there is a natural
	equivalence
    in $\mathrm{CAlg}(\mathcal{D}(\OC)^\wedge_p)$  
    (depending on the choice of $\pi$)
	\[
	R\Gamma_{\rm crys}(\sX/\uW^0)
	\widehat\otimes^{\mathbb{L}}_{W,\phi^m}\OC
	\simeq
	R\Gamma_{\rm crys}(\FttOK\sX m/\OKpi)
	\widehat\otimes^{\mathbb{L}}_{\OK}\OC,
	\]
	functorial in $\sX$.
\end{thm}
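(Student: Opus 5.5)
The plan is to construct the map over the perfect period ring $\Ainf=\Ainf(\OC)=W(\OC^\flat)$. Its Witt vector Frobenius $\phi$ is an automorphism, so Corollaries~\ref{cor:main_cor} and \ref{cor:main_cor2} apply with $R=\Ainf$. First I reduce to $m=0$ on the crystalline side. By Kato's base change along the Frobenius-compatible map $\phi^m\colon W\to W$, there is an identification
\[
R\Gamma_{\rm crys}(\sX/\uW^0)\widehat\otimes^{\mathbb L}_{W,\phi^m}W\simeq R\Gamma_{\rm crys}(\Ftt\sX m/\uW^0),
\]
where $\Ftt\sX m$ is as in \eqref{eq;Xm}. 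Since $F_m$ factors as $g_m$, then $\uk^0\to\OK^{\pi^{p^m}}/p$, then $h_m$, it remains to compare $R\Gamma_{\rm crys}(\Ftt\sX m/\uW^0)\widehat\otimes_W\OC$ with crystalline cohomology over $\OK^{\pi^{p^m}}\otimes\OC$. Afterwards I transport along $h_m$, again by Kato's base change for the morphism of log PD-bases $h_m$. That this base change is an equivalence uses that the objects are log smooth of Cartier type, via Lemma~\ref{lem;leftKanCrys}.

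Next I choose two base pre-log structures on $\Ainf$ with monoid $N=\N=N_2$. The first is $\alpha_0(1)=0$. For the second, since $e\leq p^m$, write $\pi^{p^m}=pc$ with $c\in\OK$. I choose a lift $\tilde c\in\Ainf$ of $c$ under Fontaine's surjection $\theta$ and set $\alpha_1(1)=p\tilde c$. Both structures satisfy \eqref{eq;SN} with $N_1=0$, and they agree modulo $p$, with common special fibre $(\OC^\flat,1\mapsto0)$. Corollary~\ref{cor:main_cor2} then gives a natural equivalence $\Phi_{01}$ in $\mathrm{CAlg}(\mathcal D(\Ainf)^\wedge_p)$. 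I apply it to $\Ftt\sX m\times_{k^0}(\OC^\flat,0)$, which is log smooth of Cartier type over the special fibre.

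The two sides are then specialized separately. Along the canonical Frobenius-compatible map $W\to\Ainf$, property~(1) of Corollary~\ref{cor:main_cor} identifies the source with $R\Gamma_{\rm crys}(\Ftt\sX m/\uW^0)\widehat\otimes_W\Ainf$. Along $\theta\colon(\Ainf,\alpha_1)\to(\OC,1\mapsto\pi^{p^m})$, which is a morphism of $p$-adic log PD-bases because $\theta(p\tilde c)=\pi^{p^m}$, Kato's base change identifies the target tensored with $\OC$ with $R\Gamma_{\rm crys}$ of the base change to $(\OC/p,1\mapsto 0)$, relative to $(\OC,\pi^{p^m})$. Here I must check that the composite $k\to W\to\Ainf\xrightarrow{\theta}\OC/p$ is the structure map used to define $\FttOK\sX m\otimes\OC$, and that the special-fibre log structure is $1\mapsto\pi^{p^m}\equiv0$. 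The source side is then tensored along $\theta$ as well, $\Ainf\to\OC$, which restricts to the standard map on $W$. Functoriality follows from the naturality of $\Phi_{01}$ and of all the base-change maps. Multiplicativity follows because every step is a map in $\mathrm{CAlg}$.

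The main obstacle is that the two specializations use different maps of bases, so property~(2) cannot be applied directly: $\theta\circ\alpha_0\neq\theta\circ\alpha_1$. The comparison therefore really rests on $\Phi_{01}$ over $\Ainf$ and is not canceled by the base change. The delicate point is independence of the auxiliary lift $\tilde c$, which is what leaves dependence only on $\pi$. For two lifts $\tilde c,\tilde c'$, I take the three structures $\alpha_0,\alpha_{\tilde c},\alpha_{\tilde c'}$. The cocycle relation gives $\Phi_{\tilde c\tilde c'}\circ\Phi_{0\tilde c}\simeq\Phi_{0\tilde c'}$. Property~(2), applied to $\theta$ with $\theta\circ\alpha_{\tilde c}=\theta\circ\alpha_{\tilde c'}$, shows that $\Phi_{\tilde c\tilde c'}$ becomes the identity after $\theta$-base change. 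Hence the two resulting equivalences over $\OC$ coincide.
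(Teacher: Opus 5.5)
Your proposal is correct and follows the paper's proof essentially step for step. It uses the same two pre-log structures $1\mapsto 0$ and $1\mapsto p\tilde c$ on $\Ainf$, and the same chain of Kato base changes along $g_m$, $W^0\to\sR^0$, $\theta$ and $h_m$ surrounding the comparison $\Phi_{01}$ of Corollary~\ref{cor:main_cor2}; independence of the lift likewise comes from the cocycle identity together with Corollary~\ref{cor:main_cor}(2). The only slips are attributional: the identification of the source over $\Ainf$ and the transport along $h_m$ are plain instances of Kato's base change \cite[Theorem~6.10]{katolog}, not of Corollary~\ref{cor:main_cor}(1) or Lemma~\ref{lem;leftKanCrys}.
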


We need some preparations for the proof. Put
\[
R=\Ainf(\OC)=W(\OC^\flat),
\]
with its Witt-vector Frobenius $F_R$, which is an automorphism, and let $\theta\colon R\to\OC$
be Fontaine's map.
By the assumption $e\leq p^m$, we can write $\pi^{p^m}=p\,u,$ for $u\in \OK$ (note that $u=u_\pi$ depends here on the choice of $\pi$). 

Choose an element $a\in R$ with $\theta(a)=u$ (it exists because
$\theta$ is surjective).   
Consider the pre-log rings
\begin{equation}\label{eq;SNPD}
	\sR^0=(R,\alpha^0:\N\to R),
	\qquad
	\sRp=(R,\alpha^p:\N\to R),
\end{equation}
where $\alpha^0(1)=0$ and 
$\alpha^p(1)=pa$. 
These pre-log rings satisfy \eqref{eq;SN} and
\begin{equation}\label{eq;SNSN}
	\sRp/p=\sR^0/p.
\end{equation}
By  Corollary~\ref{cor:main_cor2}, for every fine
log-smooth morphism $\sY\to\sR^0/p$ of Cartier type, there is a functorial
equivalence
\begin{equation}\label{eq;Cryscomparison1}
\Phi_{0,p}=\Phi_{\sR^0,\sR^p}:	R\Gamma_{\rm crys}(\sY/\sR^0)
	\xrightarrow{\ \sim\ }
	R\Gamma_{\rm crys}(\sY/\sRp).
\end{equation}

Consider the pre-log rings
\[
\OC^0=(\OC,\N\ni1\mapsto0),
\qquad
\OC^{\pi^{p^m}}=(\OC,\N\ni1\mapsto\pi^{p^m}),
\qquad
\OC^\pi=(\OC,\N\ni1\mapsto\pi).
\]
The map $\theta$ induces a morphism of pre-log rings
\[
\theta:\sRp\longrightarrow\OC^{\pi^{p^m}},
\]
and the identity on $\OC$ and the multiplication by $p^m$ on $\N$ induces
\[
h_m:\OC^{\pi^{p^m}}\longrightarrow\OC^\pi.
\]
Let
\[
\mu:\uk^0=\uW^0/p\longrightarrow\OC^0/p
=\OC^{\pi^{p^m}}/p
\]
be induced by $W\to\OC$.
For a log scheme $\sX\to\Spec(\uk^0)$, let (cf. \eqref{eq;Xm})  
\[
\XRm
=
\Ftt\sX m\times_{\uk^0}\sR^0/p
=
\Ftt\sX m\times_{\uk^0}\sRp/p.
\]
Then there is a canonical identification
\begin{equation}\label{eq1;claim;thm.integralHK}
	\XRm\times_{\sRp,\theta}\OC^{\pi^{p^m}}
	\simeq
	\Ftt\sX m\times_{\uk^0,\mu}\OC^{\pi^{p^m}}/p.
\end{equation}

Moreover, we have a canonical identification
\begin{equation}\label{eq;XmXOKm}
	\left(
	\Ftt\sX m\times_{\uk^0,\mu}\OC^{\pi^{p^m}}/p
	\right)
	\times_{\OC^{\pi^{p^m}},h_m}\OC^\pi
	\simeq
	\FttOK\sX m\times_{\OKpi/p}\OC^\pi/p,
\end{equation}
noting that $\uk^0\rmapo{g_m} \uk^0\rmapo{\mu} \OC^{\pi^{p^m}}/p\rmapo{h_m}\OC^{\pi}/p$ coincides with $\uk^0 \rmapo{F_m} \OK^{\pi}/p \to \OC^\pi/p$.

Below, all scalar extensions  are derived and
	$p$-completed.  This is essential for the maps $\theta$ and $\theta_n$ below,
	which need not be flat.

\begin{proof}[Proof of Theorem~\ref{thm.integralHK-OC}]
	Kato's base-change theorem and \eqref{eq;Cryscomparison1} give natural
	equivalences
	\[
	\begin{aligned}
		R\Gamma_{\rm crys}(\sX/\uW^0)
		\widehat\otimes^{\mathbb{L}}_{W,\phi^m}\OC
		&\overset{(*1)}{\simeq}
		R\Gamma_{\rm crys}(\Ftt\sX m/\uW^0)
		\widehat\otimes^{\mathbb{L}}_{W}\OC\\
		&\simeq
		R\Gamma_{\rm crys}(\Ftt\sX m/\uW^0)
		\widehat\otimes^{\mathbb{L}}_{W}R
		\widehat\otimes^{\mathbb{L}}_{R,\theta}\OC\\
		&\overset{(*2)}{\simeq}
		R\Gamma_{\rm crys}(\XRm/\sR^0)
		\widehat\otimes^{\mathbb{L}}_{R,\theta}\OC\\
		&\overset{(*3)}{\simeq}
		R\Gamma_{\rm crys}(\XRm/\sRp)
		\widehat\otimes^{\mathbb{L}}_{R,\theta}\OC\\
		&\overset{(*4)}{\simeq}
		R\Gamma_{\rm crys}\!\left(
		\Ftt\sX m\times_{\uk^0,\mu}\OC^{\pi^{p^m}}/p
		\,/\,\OC^{\pi^{p^m}}
		\right)\\
		&\overset{(*5)}{\simeq}
		R\Gamma_{\rm crys}(\FttOK\sX m/\OKpi)
		\widehat\otimes^{\mathbb{L}}_{\OK}\OC.
	\end{aligned}
	\]
	Here $(*1)$ is base change along the morphism $g_m:W^0\to W^0$ and then
	along $W\to\OC$; $(*2)$ is base change along $W^0\to\sR^0$; $(*3)$ is the $F_R$-automorphism-case comparison \eqref{eq;Cryscomparison1}; and $(*4)$ is base
	change along $\theta:\sRp\to\OC^{\pi^{p^m}}$, using
	\eqref{eq1;claim;thm.integralHK}. Finally, using \eqref{eq;XmXOKm},
	$(*5)$ is the composite of base
	change along
	\[
	h_m:\OC^{\pi^{p^m}}\longrightarrow\OC^\pi
	\]
	and Kato base change along $\OKpi\to\OC^\pi$. All applications of base
	change are instances of \cite[Theorem~6.10]{katolog}.

	If $a'\in R$ is another element with $\theta(a')=u$, the cocycle identity
	of Corollary~\ref{cor:main_cor2} expresses the comparison
	$\Phi_{0,pa'}$ as the comparison $\Phi_{0,pa}$ followed by $\Phi_{pa,pa'}$. After base
	change along $\theta$, the latter becomes the identity because both log
	parameters specialize to $\pi^{p^m}$ (see Corollary \ref{cor:main_cor}(2)).
	 Hence the displayed equivalence is
	canonically independent of the choice of $a$.
	
	This proves the theorem.
\end{proof}

We now need to descend Theorem \ref{thm.integralHK-OC} from $\mathcal{O}_C$ to $\mathcal{O}_K$. The map $\sO_K\to \OC$ is faithfully flat, but  the self-intersections $B_n = \OC\widehat\otimes^{\mathbb{L}}_{\OK}\cdots
		\widehat\otimes^{\mathbb{L}}_{\OK}\OC$ ($n+1$ times), are not perfectoid anymore, and the argument of Theorem \ref{thm.integralHK-OC} does not immediately apply. This can be easily fixed, thanks to the following construction. 
\begin{lem}\label{lem;Cech-Fontaine-rings}
	For $n\geq0$, let $B_n$ be as above. 
	Then $B_n$ is discrete, $p$-complete and $p$-torsion-free, and Frobenius on
	$B_n/p$ is surjective. Put
	\[
	B_n^\flat:=\varprojlim_{x\mapsto x^p}B_n/p,
	\qquad
	A_n:=W(B_n^\flat).
	\]
	The ring $A_n$ is $p$-complete and $p$-torsion-free, its Frobenius is an
	automorphism, and there is a natural surjective Fontaine map
	\[
	\theta_n:A_n\longrightarrow B_n.
	\]
	The constructions $B_n^\flat$, $A_n$, and $\theta_n$ are functorial in the
	cosimplicial ring $B_\bullet$; the induced maps between the $A_n$ are
	Frobenius-compatible, and $\theta_n$ is compatible with the structural maps
	$W(k)\to A_n$ and $W(k)\to B_n$.
\end{lem}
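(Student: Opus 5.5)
We first show that each $B_n$ is discrete, $p$-complete and $p$-torsion-free. Since $\OK\to\OC$ is flat and $\OC$ is $p$-torsion-free, the ordinary tensor product $\OC\otimes_{\OK}\cdots\otimes_{\OK}\OC$ is flat over $\OK$, hence $p$-torsion-free. Its derived $p$-completion is then the classical $p$-adic completion, which is discrete and $p$-torsion-free, and this is $B_n$. Moreover
\[
B_n/p\simeq (\OC/p)\otimes_{\OK/p}\cdots\otimes_{\OK/p}(\OC/p).
\]
Frobenius is surjective on $\OC/p$, so every pure tensor $x_0\otimes\cdots\otimes x_n$ is a $p$-th power $y_0^p\otimes\cdots\otimes y_n^p=(y_0\otimes\cdots\otimes y_n)^p$. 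Since Frobenius is additive in characteristic $p$, its image is a subring containing all pure tensors, so it is surjective.

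Next we construct $A_n$ and $\theta_n$. The tilt $B_n^\flat=\varprojlim_{x\mapsto x^p}B_n/p$ is a perfect $\F_p$-algebra, so $A_n=W(B_n^\flat)$ is $p$-complete and $p$-torsion-free, and its Witt Frobenius is an automorphism. For $\theta_n$ we follow the standard construction for $p$-complete rings with surjective Frobenius mod $p$ (e.g.\ the one underlying $\Ainf$ in \cite{BMS1}). Since $B_n$ is $p$-complete, the multiplicative map $B_n^\flat\to\varprojlim_{x\mapsto x^p}B_n$, $x\mapsto x^\sharp=\lim \tilde x_k^{p^k}$, is well defined. We then set
\[
\theta_n\Bigl(\sum p^k[x_k]\Bigr)=\sum p^k x_k^\sharp .
\]
This is a ring map. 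Equivalently, it is the map $W(B_n^\flat)\to W(B_n)\to B_n$ induced by the universal property of Witt vectors together with $B_n^\flat\to B_n/p$, lifted using $p$-completeness and the deformation theory of $W$ of a perfect ring.

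Surjectivity of $\theta_n$ is the main point. Modulo $p$, $\theta_n$ becomes the projection $B_n^\flat\to B_n/p$ onto the zeroth component. This projection is surjective because Frobenius on $B_n/p$ is surjective: a preimage can be built by successively choosing $p$-th roots. Hence $\theta_n$ is surjective mod $p$. Since the source is $p$-complete and the target is $p$-complete and $p$-adically separated, successive approximation gives surjectivity. This is the step to check carefully; it works because $B_n$ is classically $p$-complete, which we established first.

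Functoriality is formal. A ring map $B_n\to B_{n'}$ in the cosimplicial structure induces maps on $B^\flat$, on $W(-)$ and on $\sharp$, so the $\theta_n$ form a map of cosimplicial rings. The maps between the $A_n$ are $W$ of maps of perfect rings, hence Frobenius-compatible. Finally, the map $W(k)\to A_n$ is $W$ of $k=k^\flat\to B_n^\flat$, and applying $\theta$ to Teichmüller lifts recovers the structure map $W(k)\to\OK\to B_n$. This holds because on $k$ Teichmüller elements go to Teichmüller lifts and $\theta$ is $W(k)$-linear by the Witt-vector universal property.
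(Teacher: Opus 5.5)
Your proposal is correct and follows essentially the same route as the paper. Flatness of $\OK\to\OC$ gives that $B_n$ is discrete and $p$-torsion-free, the mod-$p$ tensor description gives surjective Frobenius, and $\theta_n$ is Fontaine's map, surjective because $B_n^\flat\to B_n/p$ is surjective and by successive approximation. The only loosely stated step is the compatibility with $W(k)$: you assert "$W(k)$-linearity by the Witt-vector universal property", which is what is being proved. It is cleanest via the rigidity that a ring map $W(k)\to B_n$ into a $p$-complete ring is determined by its reduction $k\to B_n/p$, or equivalently via your Teichm\"uller computation combined with $p$-adic continuity.
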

\begin{proof}As we have observed, the map $\OK\to\OC$ is faithfully flat, since $\OK$ is a discrete valuation
	ring and $\OC$ is torsion-free over it. Hence the ordinary tensor products coincide with the derived
	tensor products. Their derived $p$-completions are discrete and $p$-completely
	flat over $\OK$ (in particular, they are $p$-torsion-free). These completions
	are the precisely the rings $B_n$.
	Modulo $p$ one has
	\[
	B_n/p\simeq
	\underbrace{
		(\OC/p)\otimes_{\OK/p}\cdots\otimes_{\OK/p}(\OC/p)
	}_{n+1\text{ factors}}.
	\]
	Frobenius on $\OC/p$ is surjective, and this immediately says that  Frobenius on $B_n/p$ is surjective.
	
	The inverse limit $B_n^\flat$ is therefore a perfect $\F_p$-algebra, so the Frobenius on $A_n=W(B_n^\flat)$ is an automorphism. Since $k$ is
	perfect, the structural map $k\to B_n/p$ lifts canonically to a
	Frobenius-compatible map $k\to B_n^\flat$, and hence to a
	Frobenius-compatible map $W=W(k)\to A_n$. The standard construction of Fontaine gives a natural ring map $\theta_n\colon A_n\to B_n$, and the composition with $W(k)\to A_n$ is immediately seen to be the structural morphism. Note that the   reduction modulo $p$ of $\theta_n$ is the
	projection $B_n^\flat\to B_n/p$, which is surjective. Successively lifting
	modulo $p^r$ and using $p$-adic completeness proves that $\theta_n$ is
	surjective as well (this is in fact well-known).
\end{proof}

\begin{proof}[Proof of Theorem~\ref{thm.integralHK}]
	
	In what follows, for $c\in \OK$, we let $c$ denote $c\otimes 1\otimes \cdots\otimes 1\in B_n$.
	We now deduce the result from Theorem~\ref{thm.integralHK-OC}. For each $n$, choose
	\[
	a_n\in A_n
	\quad\text{with }
	\theta_n(a_n)=u,
	\]
	and equip $A_n$ with the two pre-log structures
	\[
	\sA_n^0=(A_n,\N\ni1\mapsto0),
	\qquad
	\sA_n^p=(A_n,\N\ni1\mapsto pa_n).
	\]
	We also write
	\[
	B_n^{\pi^{p^m}}=(B_n,\N\ni1\mapsto\pi^{p^m}),
	\qquad
	B_n^\pi=(B_n,\N\ni1\mapsto\pi).
	\]
  	Then $\theta_n$ induces a morphism
	$\sA_n^p\to B_n^{\pi^{p^m}}$, and the identity on $B_n$ together with
	multiplication by $p^m$ on $\N$ gives
	$h_m:B_n^{\pi^{p^m}}\to B_n^\pi$.
	The proof of Theorem~\ref{thm.integralHK-OC}, with
	$(R,\theta,\OC,a)$ replaced by $(A_n,\theta_n,B_n,a_n)$, gives a natural
	equivalence
	\begin{equation}\label{eq;proofthm.integralHK}
		E_n(a_n):
		R\Gamma_{\rm crys}(\sX/\uW^0)
		\widehat\otimes^{\mathbb{L}}_{W,\phi^m}B_n
		\xrightarrow{\ \sim\ }
		R\Gamma_{\rm crys}(\FttOK\sX m/\OKpi)
		\widehat\otimes^{\mathbb{L}}_{\OK}B_n.
	\end{equation}

	We first check that this equivalence is independent of the choice of $a_n$.
	Let $a_n'$ be another lift of $u$. By the cocycle identity of
	Corollary~\ref{cor:main_cor2},
	\[
	\Phi_{0,pa_n'}
	\simeq
	\Phi_{pa_n,pa_n'}\circ\Phi_{0,pa_n}.
	\]
	After base change along $\theta_n$, both  parameters $pa_n$ and $pa_n'$
	become $pu=\pi^{p^m}$. The final base-change assertion of
	Corollary~\ref{cor:main_cor2}  and Corollary \ref{cor:main_cor}(2)
	therefore identifies the base
	change of $\Phi_{pa_n,pa_n'}$ with the identity. 
    Hence
	\[
	E_n(a_n)\simeq E_n(a_n')
	\]
	canonically, and these identifications satisfy the cocycle condition.
		Let $\delta:B_n\to B_r$ be a face or degeneracy map. Functoriality in
	Lemma~\ref{lem;Cech-Fontaine-rings} gives a Frobenius-compatible map
	\[
	\delta_{\inf}:A_n\longrightarrow A_r
	\]
	with $\theta_r\circ\delta_{\inf}=\delta\circ\theta_n$. The base-change
	compatibility in Corollary~\ref{cor:main_cor2} gives
	\[
	E_n(a_n)\widehat\otimes^{\mathbb{L}}_{B_n,\delta} B_r 
	\simeq
	E_r(\delta_{\inf}(a_n)).
	\]
	Since $\theta_r(\delta_{\inf}(a_n))=u$, choice-independence 
	in Theorem~\ref{thm.integralHK-OC} 
	identifies the
	right-hand side with $E_r(a_r)$. All the remaining arrows in the construction
	of $E_n$ are Kato base-change maps, and their transitivity is compatible with
	the cosimplicial structure. 
	The cocycle identity therefore shows that these
	identifications are compatible with all composites of faces and degeneracies.
	Thus the equivalences \eqref{eq;proofthm.integralHK} form an equivalence of
	cosimplicial descent diagrams.
	
 Faithfully flat
	descent for derived $p$-complete modules
	\cite[Appendix~D.6.3.3]{SAG}, applied in the functor category,
	therefore descends the system $E_\bullet$ to the natural equivalence of
	Theorem~\ref{thm.integralHK}. Since equivalences may be checked after the
	$p$-completely faithfully flat base change $\OK\to\OC$, the descended
	transformation is an equivalence.
	
\end{proof}

\begin{cor}\label{cor.integralHK}
	Let the notation be as in Theorem~\ref{thm.integralHK}. Let
	$\tsX$ be a fine, quasi-compact and
	quasi-separated log-smooth scheme of Cartier type over $\OKpi$, and let $\sX=\tsX\times_{\OKpi}\uk^0.$ If 
      $e\leq p^m$, there is a
		natural equivalence
		\[
\Phi_{\pi, \mathfrak X }^m\colon		R\Gamma_{\rm crys}(\sX/W^0)
		\widehat\otimes^{\mathbb{L}}_{W,\phi^m}\OK
		\simeq
		R\Gamma_{\dR}^{\eta^m}(\tsX/\OKpi),
		\]
		functorial for morphisms over $\OKpi$.
\end{cor}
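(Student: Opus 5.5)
The plan is to define $\Phi_{\pi,\fX}^m$ as the composite of the equivalence $\rho^{\rm int}_{\pi,m}$ of Theorem~\ref{thm.integralHK}, applied to the special fibre $\sX=\tsX\times_{\OKpi}\uk^0$, with the twisted crystalline--de Rham comparison $\xi_{\sX}$ of Lemma~\ref{lem;CrysDRComparisontwisted}:
\[
R\Gamma_{\rm crys}(\sX/W^0)\widehat\otimes^{\mathbb L}_{W,\phi^m}\OK
\xrightarrow{\ \rho^{\rm int}_{\pi,m}\ }
R\Gamma_{\rm crys}(\FttOK\sX m/\OKpi)
\xrightarrow{\ \xi_{\sX}\ }
R\Gamma_{\dR}^{\eta^m}(\tsX/\OKpi).
\]
Both arrows are equivalences, so the composite is one; only the hypotheses and the functoriality need checking.

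\textbf{Step 1 (hypotheses).} Check that $\sX\to\Spec(\uk^0)$ is fine, quasi-compact, quasi-separated, log smooth and of Cartier type. Fineness, log smoothness and the quasi-compactness and quasi-separatedness conditions are stable under base change along $\OKpi\to\uk^0$. For Cartier type, integrality is stable under base change. Exactness of the relative Frobenius of $\sX/\uk^0$ follows because it is the base change of the relative Frobenius of $\tsX\times_{\OKpi}\OKpi/p$ over $\OKpi/p$, which is exact by assumption. Hence Theorem~\ref{thm.integralHK} applies to $\sX$, and Lemma~\ref{lem;CrysDRComparisontwisted} applies directly to $\tsX$.

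\textbf{Step 2 (functoriality).} Let $f\colon\tsY\to\tsX$ be a morphism over $\OKpi$. It induces $f_0\colon\sY\to\sX$ over $\uk^0$ and $\FttOK{f_0}m\colon\FttOK\sY m\to\FttOK\sX m$ over $\OKpi/p$, and the latter is the reduction of $f$ via the identification $\FttOK\sX m\simeq\tsXp\times_{\OKpi/p,F^m}\OKpi/p$ used in the proof of Lemma~\ref{lem;CrysDRComparisontwisted}. Naturality of $\rho^{\rm int}_{\pi,m}$ in $f_0$ gives the first square, and the commutative diagram in Lemma~\ref{lem;CrysDRComparisontwisted} gives the second; pasting the two squares gives functoriality of the composite.

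\textbf{Main obstacle.} The delicate point is ensuring that the pullback $\FttOK{f_0}m^*$ appearing in the naturality of Theorem~\ref{thm.integralHK} is the same map as the one appearing in Lemma~\ref{lem;CrysDRComparisontwisted}, that is, that the base-change identification \eqref{eq;XOKm} is natural in $\sX$. This follows from the fact that both are crystalline pullbacks along the same morphism of log schemes over $\OKpi/p$, by the functoriality of the log crystalline topos \cite[(5.9)]{katolog}.
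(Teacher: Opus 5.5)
Your proposal is correct and follows the paper's own proof, which defines $\Phi^m_{\pi,\fX}$ as the composite $\xi_{\sX}\circ\rho^{\rm int}_{\pi,m}$ of Theorem~\ref{thm.integralHK} and Lemma~\ref{lem;CrysDRComparisontwisted}. Your checks that the special fibre inherits the hypotheses (the base change $\OKpi/p\to\uk^0$ is strict) and your pasting of the two naturality squares simply spell out details the paper leaves implicit.
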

\begin{proof}
    This follows from
	Lemma~\ref{lem;CrysDRComparisontwisted} and
	Theorem~\ref{thm.integralHK}.
\end{proof}
Finally, we record the dependency on the choice of a uniformizer $\pi$ of $\OK$. This is similar to the classical dependency in \cite{HK}. 
\begin{prop}[Change of uniformizer]\label{prop:change-uniformizer}
Let $\pi$ and $\pi'$ be two uniformizers of $\sO_K$, and let $m\geq0$
satisfy $e\leq p^m$. Then there is a canonical natural equivalence
\[
T_{\pi,\pi'}^{(m)}\colon
R\Gamma_{\rm crys}
\bigl(
 \sX_{\OK^\pi}^{(m)} 
/\sO_K^\pi\bigr)
\xrightarrow{\ \sim\ }
R\Gamma_{\rm crys}
\bigl(
 \sX_{\OK^{\pi'}}^{(m)} 
/\sO_K^{\pi'}\bigr)
\]
such that
\[
\rho_{\pi',m}^{\rm int}
\simeq
T_{\pi,\pi'}^{(m)}\circ\rho_{\pi,m}^{\rm int}.
\]
These equivalences satisfy
\[
T_{\pi,\pi}^{(m)}=\id,
\qquad
T_{\pi',\pi''}^{(m)}\circ T_{\pi,\pi'}^{(m)}
\simeq
T_{\pi,\pi''}^{(m)}.
\]
\end{prop}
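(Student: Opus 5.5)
The definition I would use is
\[
T^{(m)}_{\pi,\pi'}:=\rho^{\rm int}_{\pi',m}\circ(\rho^{\rm int}_{\pi,m})^{-1}.
\]
Both $\rho^{\rm int}_{\pi,m}$ and $\rho^{\rm int}_{\pi',m}$ are natural equivalences in $\CAlg(\mathcal D(\OK)^\wedge_p)$ with the same source, $R\Gamma_{\rm crys}(\sX/\uW^0)\widehat\otimes^{\mathbb L}_{W,\phi^m}\OK$. So $T^{(m)}_{\pi,\pi'}$ is a natural equivalence, the identity $\rho^{\rm int}_{\pi',m}\simeq T^{(m)}_{\pi,\pi'}\circ\rho^{\rm int}_{\pi,m}$ holds tautologically, and the cocycle identities follow formally. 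Composing three such quotients gives
\[
T^{(m)}_{\pi',\pi''}\circ T^{(m)}_{\pi,\pi'}=\rho^{\rm int}_{\pi'',m}\,(\rho^{\rm int}_{\pi',m})^{-1}\rho^{\rm int}_{\pi',m}\,(\rho^{\rm int}_{\pi,m})^{-1}\simeq T^{(m)}_{\pi,\pi''},
\]
and $T^{(m)}_{\pi,\pi}=\id$ is immediate. Naturality in $\sX$ is inherited from Theorem~\ref{thm.integralHK}.

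This formal definition hides the content, so I would also identify $T^{(m)}_{\pi,\pi'}$ with a geometrically meaningful transfer. The idea is to go back into the proof of Theorem~\ref{thm.integralHK-OC}. Write $\pi'=v\pi$ with $v\in\OK^\times$. Then $\pi'^{p^m}=p\,u'$ with $u'=v^{p^m}u$. Choose lifts $a,a'\in R=\Ainf(\OC)$ of $u$ and $u'$ along $\theta$.

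Over $\OC$, the composite $\rho_{\pi'}\rho_\pi^{-1}$ then involves the comparison $\Phi_{0,pa'}\circ\Phi_{0,pa}^{-1}$. By the cocycle identity of Corollary~\ref{cor:main_cor2}, this equals $\Phi_{pa,pa'}$. Here both log parameters $pa$ and $pa'$ reduce to $0$ modulo $p$, so Corollary~\ref{cor:main_cor2} does apply. The resulting formula is
\[
T^{(m)}_{\pi,\pi'}\widehat\otimes_{\OK}\OC\simeq \mathrm{BC}_{\theta,h_m}\circ\Phi_{pa,pa'}\circ \mathrm{BC}_{\theta,h_m}^{-1},
\]
a Kato-type base change sandwich. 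I would verify it by chasing the chain $(*1)$--$(*5)$: the steps $(*1)$ and $(*2)$ do not depend on the uniformizer, so they cancel.

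The main obstacle is checking that the descent data are compatible. Concretely, the identification above must hold on every term $B_n$ of the \v{C}ech nerve, using the lifts $a_n,a'_n\in A_n$ from the proof of Theorem~\ref{thm.integralHK}. It must also be independent of these choices. I would handle this exactly as in that proof: after base change along $\theta_n$, the base change assertion (2) of Corollary~\ref{cor:main_cor} together with the cocycle identity makes the choice of lifts irrelevant. Faithfully flat descent along $\OK\to\OC$ then produces a canonical equivalence over $\OK$, and it agrees with the quotient definition given above.
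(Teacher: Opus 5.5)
Your proof is correct, but it runs in the opposite direction from the paper's.

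The paper constructs $T^{(m)}_{\pi,\pi'}$ intrinsically. It takes the transfer $\Phi_{pa_\pi,pa_{\pi'}}$ over $\Ainf$ from Corollary~\ref{cor:main_cor}, which compares the two non-hollow log structures. It then specializes this along $\theta$, applies Kato base change, and runs the same \v{C}ech descent as in Theorem~\ref{thm.integralHK}. Only afterwards does it deduce $\rho^{\rm int}_{\pi',m}\simeq T^{(m)}_{\pi,\pi'}\circ\rho^{\rm int}_{\pi,m}$, from the cocycle identity $\Phi_{0,pa_{\pi'}}=\Phi_{pa_\pi,pa_{\pi'}}\circ\Phi_{0,pa_\pi}$. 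The identity and cocycle properties come from the strict cocycle identities over $\Ainf$.

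You instead define $T^{(m)}_{\pi,\pi'}:=\rho^{\rm int}_{\pi',m}\circ(\rho^{\rm int}_{\pi,m})^{-1}$. This is legitimate because both comparisons are natural equivalences with the same source, and it makes every asserted property tautological. In fact, since $\rho^{\rm int}_{\pi,m}$ is an equivalence, the required relation forces any such $T^{(m)}_{\pi,\pi'}$ to equal this quotient. So your first paragraph already proves the statement as formulated, with canonicity inherited from the choice-independence established in Theorem~\ref{thm.integralHK}.

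What the paper's route buys is exactly the content of your second half. It describes $T^{(m)}_{\pi,\pi'}$ as a direct change of the base log structure from $\pi$ to $\pi'$ on the compactifying side, without passing through the hollow structure $\uW^0$. Your sketch of that identification is essentially the paper's argument: the cocycle identity over $\OC$, followed by the same choice-independence and descent bookkeeping.

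One small notational point: the two occurrences of $\mathrm{BC}_{\theta,h_m}$ in your sandwich formula are different base-change maps. One is for the parameter $pa$, landing over $\OC^{\pi}$; the other is for $pa'$, landing over $\OC^{\pi'}$. The same applies to step $(*5)$. So the formula is not a conjugation by a single map.
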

\begin{proof} This is just a direct consequence of the construction. We spell this out for the convenience of the reader. 
Put
\[
u_\pi=\frac{\pi^{p^m}}p,
\qquad
u_{\pi'}=\frac{(\pi')^{p^m}}p,
\]
and choose $a_\pi,a_{\pi'}\in A_{\inf}$ 
with $\theta(a_\pi)=u_\pi,
\theta(a_{\pi'})=u_{\pi'}$. 
Working over $A_{\inf}$, Corollary~\ref{cor:main_cor} gives the equivalence  $\Phi_{pa_\pi,pa_{\pi'}}$ (with the obvious choice of the notation). 
If we specialize it along $\theta$, and apply the Kato base-change
equivalences, and repeat  the same \v{C}ech descent as in the
proof of Theorem~\ref{thm.integralHK}, we get the equivalence
$T_{\pi,\pi'}^{(m)}$.
The cocycle identity
\[
\Phi_{0,pa_{\pi'}}
=
\Phi_{pa_\pi,pa_{\pi'}}\circ\Phi_{0,pa_\pi}
\]
translates to give
\[
\rho_{\pi',m}^{\rm int}
\simeq
T_{\pi,\pi'}^{(m)}\circ\rho_{\pi,m}^{\rm int}.
\]
If $a_\pi'$ is another lift of $u_\pi$, then the specialization of
$\Phi_{pa_\pi,pa_\pi'}$ is the identity by
Corollary~\ref{cor:main_cor}(2). The same argument applies to the
$\pi'$-endpoint. Hence $T_{\pi,\pi'}^{(m)}$ is independent of all
the other choices. The identity and cocycle assertions follow from the strict
cocycle identities for the equivalence over $A_{\inf}$ and the transitivity of
Kato base change.
\end{proof} 
\section{The monodromy operator on de Rham cohomology and examples}

Let $k$ be a perfect field with $\ch(k)=p>0$ and consider the pre-log rings
\[ W^0=(W(k), \N\ni1\mapsto0)\qaq\Wp=(W(k), \N\ni1\mapsto p)\qaq
 \klog=(k, \N\ni1\mapsto0).\]
In this Section, we discuss some integral phenomena that can be captured on the relative log de Rham cohomology $R\Gamma_{\rm dR}(\fX/\Wp)$ of
a log smooth scheme $\fX$ of Cartier type over $\Wp$
 by transporting the structure from
 $R\Gamma_{crys}(\sX/W^0)$ with $\sX=\fX\otimes_{W^0} \klog$.


\begin{const}\label{ConstructionMonodromy}
We begin by recalling some material from \cite{HK} and \cite{tsuji_Faltings},
explaining the construction of the monodromy operator $N$ on log crystalline cohomology. 
Consider the pre-log rings
\[ \fS=(\Wu,\N\ni 1\mapsto u)\qaq
\fSFt=(W[[u_1,u_2]],\N^2\ni (n_1,n_2) \mapsto u_1^{n_1} u_2^{n_2}).\]

Note that $\fSFt$ is the $p$-completed self coproduct of $\fS$ over $W$ with the trivial log structure. Let $p_1,p_2: \fS \to \fSFt$ be the two co-projections (the two coordinate inclusions on underlying rings).

Consider a map $\xi: \fS\to \Wp$ of pre-log rings whose map on underling rings maps $u$ to $p$ and the map on monoids is the identity on $\N$.
Let $\sD=(D, \N\ni 1\to u)$ be the $p$-adic completion of the log PD envelope of $\fS$ with respect to $\Ker(\xi)$ (cf. \cite[(2.16)]{HK}).
The map $\xi$ lifts to a map $\xi_\sD: \sD\to \Wp$.
We view $\sD$ as a PD-thickening of $\klog$ via the composite of $\xi_\sD$ and 
the quotient map $\Wp\to \klog$.

Next, we consider a map $\xiFt: \fSFt\to \Wp$ of pre-log rings whose map on underling rings maps $u_i$ to $p$ ($i=1,2$) and the map on monoids is $\N^2\to \N;(n_1,n_2)\mapsto n_1+n_2$. 
Let $\sDFt=(\DFt, (\N^2\to \N)^{\mathrm{ex}})$ be the $p$-adic completion of the log PD envelope of $\fSFt$ with respect to $\Ker(\xiFt)$ (cf. \cite[(5.3)]{katolog} and \cite[(2.16)]{HK}). 
The maps $p_1,p_2: \fS \to \fSFt$ induce $p_1,p_2: \sD \to \sDFt$.
By \cite[(6.5)]{katolog}, we have $D$-algebra isomorphism
\[\DFt \simeq D\{v\},\]
where $D\{v\}$ is the $p$-completed PD polynomial algebra over $D$ in $v=\frac{p_1(u)}{p_2(u)}-1$ (note $\frac{p_1(u)}{p_2(u)}\in (\DFt)^\times$ since $(n,-n)\in (\N^2\to \N)^{\mathrm{ex}}$, where $(-)^{\mathrm ex}$ denotes the exactification of a map of monoids).
Let $f: \DFt \to D$ be the $D$-linear map given by
\[ \DFt \simeq D\{v\}=\widehat{\bigoplus}_{n\geq 0} D\cdot v^{[n]} \to D\cdot v^{[1]} \simeq D.\]
Let $\theta\colon \sD\to W^0$ be the morphism of log PD bases defined by mapping $u^{[n]}$ to $0$ for all $n> 0$ (and given by the identity on the chart $\N$).
For a log smooth integral morphism $\sX=(X,M_X)\to \Spec(\klog)$, we consider the following $D$-linear morphism $\sN$
\[
\mathcal{N}\colon R\Gamma_{crys}(\sX/\sD) \longrightarrow \theta_* R\Gamma_{crys}(\sX/W^0)
\]
\begin{align*}
    R\Gamma_{crys}(\sX/\sD) \xrightarrow{p_1} &R\Gamma_{crys}(\sX/\sD)\widehat{\otimes}^\bL_{D} \DFt\simeq  R\Gamma_{crys}(\sX/\sDFt) \\
    & \simeq R\Gamma_{crys}(\sX/\sD)\widehat{\otimes}^\bL_{D} \DFt \xrightarrow{f} R\Gamma_{crys}(\sX/\sD) \xrightarrow{q_\theta} \theta_* R\Gamma_{crys}(\sX/W^0)
\end{align*}
where the first (resp. second) equivalence is the base change along $p_1$ (resp. $p_2$)
which is an equivalence by \cite[(6.10)]{katolog},  and  $q_\theta$ is obtained by adjunction from the equivalence  
\[  R\Gamma_{crys}(\sX/\sD) \otimes^\bL_{D,\theta} W\simeq R\Gamma_{crys}(\sX/W^0)\]
provided by \cite[(6.10)]{katolog}. Applying again adjuction and Kato's base-change formula, we obtain then a map
\[ N\colon R\Gamma_{crys}(\sX/\sD) \otimes^\bL_{D,\theta} W \simeq R\Gamma_{crys}(\sX/W^0) \to R\Gamma_{crys}(\sX/W^0)\]
which is $W$-linear.  
On the cohomology, this operator coincides with the monodromy operator defined in \cite{HK} (see. \cite[(6.7)]{katolog}, \cite[\S4.3]{tsuji_Faltings} and \cite[6.40 and 6.45]{Du-Moon-Shimizu}). Thus, we get an additional structure on $R\Gamma_{crys}(\sX/W^0)$ as an object of $D^{\phi,N}(W)$ (see Definition \ref{def;DphiN} below),
where the action of $\phi$ is induced by the relative Frobenius of $\sX$ over $\uk^0$,
which is an isogeny by \cite[(2.24)]{HK}.
\end{const}

\begin{defn}\label{def;DphiN}
A $(\phi,N)$-module over $W$ is a triple $(M, \phi_M,N_M)$, where $M$ a $W$-module,  
$\phi_M: M\to M$ a $\phi$-semilinear isogeny (i.e. $\phi_M\otimes_W K_0$ is an automorphism), and $N_M: M\to M$   is a $W$-linear endomorphism such that 
$N_M \phi_M = p\phi_M N_M$. 
Write $D^{\phi,N}(W)$ for the  derived category
of $(\phi,N)$-modules over $W$. See e.g. \cite{deg_niziol} for details. 
\end{defn}
\begin{rmk}
	After inverting $p$, a $\phi$-semilinear isogeny is equivalently an  isomorphism $\phi^*M_{K_0}\to M_{K_0}$. With the convention $M(-1)=(M,p \phi_M)$,  the relation $N \phi=p \phi N$ says exactly that $N\colon M\to M(-1)$ is $\phi$-equivariant. 
\end{rmk}

Let $K$ be a complete discrete valuation field of mixed characteristic $(0,p)$ with the valuation ring $\sO_K$ and residue field $k$ and $W=W(k)$ and $K_0=W[1/p]$.
Let $\lSmCar_{\OK}$ be the category of log smooth schemes of Cartier type over $\OKpi=(\OK,\N\to \OK; 1\to \pi)$.

\begin{thm}
Let $\lSmCarpr_{\OK}$ be the full subcategory of proper objects in
$\lSmCar_{\OK}$, and let $m\geq0$ satisfy $e\leq p^m$. For
$\fX\in\lSmCarpr_{\OK}$, put
\[
\sX:=\fX\times_{\OKpi}\uk^0.
\] 
Then the functor
\[
R\Gamma_{\dR}^{\eta^m}(-/\OKpi)\colon
(\lSmCarpr_{\OK})^{\rm op}\longrightarrow\mathcal D(\OK)^\wedge_p
\]
is naturally equivalent to the composite
\[
(\lSmCarpr_{\OK})^{\rm op}
\xrightarrow{\ \fX\mapsto R\Gamma_{\rm crys}(\sX/W^0)\ }
D^{\phi,N}(W)
\xrightarrow{\ -\widehat\otimes^{\mathbb L}_{W,\phi^m}\OK\ }
\mathcal D(\OK)^\wedge_p.
\]
Here the $(\phi,N)$-structure on crystalline cohomology is the one of
Construction~\ref{ConstructionMonodromy}.
\end{thm}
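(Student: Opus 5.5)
The statement is essentially a repackaging of Corollary~\ref{cor.integralHK} as an equivalence of functors. The $(\phi,N)$-structure only enters through the source category and is then forgotten by the base change functor $-\widehat\otimes^{\mathbb L}_{W,\phi^m}\OK$, since this functor lands in $\mathcal D(\OK)^\wedge_p$. So I would produce the natural equivalence objectwise by $\Phi^m_{\pi,\fX}$ and check naturality for all morphisms in $\lSmCarpr_{\OK}$.

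First I need to make sense of the composite as a functor. Construction~\ref{ConstructionMonodromy} equips $R\Gamma_{\rm crys}(\sX/W^0)$ with $N$ (via the log PD envelope $\sD$ and Kato base change) and with $\phi$ (via relative Frobenius). For a morphism $f\colon\fY\to\fX$ over $\OKpi$, the restriction $f_0\colon\sY\to\sX$ over $\uk^0$ induces pullbacks on $R\Gamma_{\rm crys}(-/\sD)$, $R\Gamma_{\rm crys}(-/\sDFt)$ and $R\Gamma_{\rm crys}(-/W^0)$. These pullbacks commute with the base-change equivalences of \cite[(6.10)]{katolog}, with the $D$-linear projection $f\colon \DFt\to D$, and with relative Frobenius. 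This gives a functor to $D^{\phi,N}(W)$.

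Composing with the forgetful functor $D^{\phi,N}(W)\to\mathcal D(W)^\wedge_p$ recovers $\fX\mapsto R\Gamma_{\rm crys}(\sX/W^0)$ with its crystalline pullbacks. Since the base change functor only sees the underlying object of $\mathcal D(W)^\wedge_p$, the composite in the statement is therefore $\fX\mapsto R\Gamma_{\rm crys}(\sX/W^0)\widehat\otimes^{\mathbb L}_{W,\phi^m}\OK$.

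Next I invoke Theorem~\ref{thm.integralHK}: $\rho^{\rm int}_{\pi,m}$ is natural for all morphisms over $\uk^0$, in particular for the reductions $f_0$ of morphisms over $\OKpi$. I then compose with $\xi_{\sX}$ of Lemma~\ref{lem;CrysDRComparisontwisted}, which is natural for morphisms over $\OKpi$. The composite $\Phi^m_{\pi,\fX}=\xi_\sX\circ\rho^{\rm int}_{\pi,m}$ is thus natural on $(\lSmCarpr_{\OK})^{\rm op}$. It is an equivalence objectwise, so it is an equivalence of functors. Properness is not needed for this argument; it only serves to make the cohomologies perfect, which is relevant for the torsion-length applications.

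The main obstacle I expect is the homotopy-coherent nature of naturality. Theorem~\ref{thm.integralHK} and Lemma~\ref{lem;CrysDRComparisontwisted} assert naturality at the level of commuting squares in the homotopy category, whereas an equivalence in $\Fun((\lSmCarpr_{\OK})^{\rm op},\mathcal D(\OK)^\wedge_p)$ needs coherent data. I would handle this as follows:
\begin{itemize}
\item The transfer $\rho^{\rm int}_{\pi,m}$ is built from $\varkappa$, which lives in the $1$-category $\DA^{\str}$, together with sifted left Kan extension, étale descent and Čech descent. Each of these steps is performed in functor categories, so $\rho^{\rm int}_{\pi,m}$ is a genuine natural transformation of $\infty$-functors, as stated in Corollary~\ref{cor:main_cor2} and in the descent step of the proof of Theorem~\ref{thm.integralHK}.
\item For $\xi_\sX$, the comparison is built from Kato's de Rham resolution and the divided Frobenius. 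These are strict sheaf-level constructions on $\sX_{\acute et}$, so they are functorial at the level of complexes and hence coherent.
\end{itemize}
With both pieces coherent, restricting along $\fX\mapsto\sX$ gives the desired natural equivalence.
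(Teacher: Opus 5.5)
Your proposal is correct and follows the paper's proof. Construction~\ref{ConstructionMonodromy} supplies the lift to $D^{\phi,N}(W)$, the base change $-\widehat\otimes^{\mathbb L}_{W,\phi^m}\OK$ only sees the underlying complex, and Corollary~\ref{cor.integralHK} (that is, $\xi_{\sX}\circ\rho^{\rm int}_{\pi,m}$) gives the natural equivalence. Your remarks on homotopy coherence and on properness being unnecessary go beyond what the paper spells out, but they do not change the route.
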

\begin{proof}
Construction~\ref{ConstructionMonodromy} provides the functor to
$D^{\phi,N}(W)$, and Corollary~\ref{cor.integralHK}(2) identifies its
scalar extension with the sheafwise complex
$R\Gamma_{\dR}^{\eta^m}(-/\OKpi)$.
\end{proof}
 

\begin{ex}\label{ex:enriques}
   In this example we work over the ring of Witt vectors $W=W(k)$, where $k$ is an algebraically closed field of characteristic $p=2$. Our goal is to construct a projective, vertical, log smooth scheme $\fX$ over $W^\times$ and two non trivial $2$-torsion classes, one constructed in log crystalline cohomology over $W^0$ of $\sX=\fX\times _{W^\times} k^0$ and another in relative log de Rham cohomology of $\fX$ over $W^\times$. Our main Theorem shows that, in particolar, the torsion subgroups of these two groups are isomorphic (so, this is an independent sanity check -- the classes should match by construction, although we do not verify this directly). Note that the constructed class is annihilated by the integral monodromy operator.  

        Choose a classical Enriques surface $V$ over $k$ whose K3-cover is normal. Such surfaces exist: they form an open dense locus in the moduli space of classical Enriques surfaces in characteristic $2$, see \cite[Remark~1.3.9]{EnriquesBook}. Also, we have \cite[Corollary~1.4.9]{EnriquesBook},
        \[
        H^0(V,\mathcal T_V)=0,
        \]
         the canonical sheaf satisfies $\omega_V\not\simeq\mathcal O_V$ and $\omega_V^{\otimes 2}\simeq\mathcal O_V$. By \cite[Theorem~1.4.13]{EnriquesBook}, $H^2_{\rm crys}(V/W)\simeq W^{\oplus 10}\oplus k$:
the free rank is $b_2(V)=10$, and the torsion submodule is thus a one-dimensional
$k$-vector space.
By \cite[Corollary~1.4.14]{EnriquesBook} this torsion is \emph{divisorial} in the
sense of Illusie \cite{IllusiedRW}: it is generated by crystalline Chern classes of
numerically trivial line bundles. Since $\operatorname{Pic}^\tau(V)=\Z/2$ is generated
by $\omega_V$, the class $\alpha_V:=c_1^{\rm crys}(\omega_V)$ must be non-zero and $\alpha_V$ generates  the torsion submodule of
$H^2_{\rm crys}(V/W)$.

        Fix a very ample divisor $H$ on $V$. For $n$ sufficiently large, choose a smooth connected curve $   C\in |nH|$
        such that $H^1(V,\mathcal O_V(C))=0$. Put
        \[
        \mathcal L:=\omega_V|_C,
        \qquad
        \mathcal N:=\mathcal N_{C/V}=\mathcal O_C(C).
        \]
        The line bundle $\mathcal L$ is a non-trivial $2$-torsion line bundle of degree zero. Indeed, if $\mathcal L$ were trivial, then the exact sequence
        \[
        0\longrightarrow \omega_V(-C)\longrightarrow\omega_V
        \longrightarrow\omega_V|_C\longrightarrow0
        \]
        and Serre duality
        \[
        H^1(V,\omega_V(-C))^\vee\simeq H^1(V,\mathcal O_V(C))=0
        \]
        would imply that $H^0(V,\omega_V)\to H^0(C,\omega_V|_C)$ is surjective. This is impossible because $H^0(V,\omega_V)=0$, whereas $H^0(C,\mathcal O_C)=k$.
Next, consider the  surface
\[
\pi\colon S=\mathbb P(\mathcal O_C\oplus\mathcal N)\longrightarrow C
\]
and the section $C\subset S$ corresponding to the quotient
$\mathcal O_C\oplus\mathcal N\twoheadrightarrow\mathcal O_C$.   This embeds $C$ in $S$ with normal bundle $\mathcal{N}_{C/S}\simeq\mathcal{N}^{\vee}$.   

       After this operation, consider the projective scheme $Y$ obtained as a glueing of $S$ and $V$ along the common $C$ (this is essentially the same as the double construction along an effective Cartier divisor discussed in \cite{BindaKrishna}). By construction, this is in fact a normal crossing scheme, of dimension $2$ over $k$, with exactly 2 smooth components meeting transversally in $C$, since
\[
\mathcal N_{C/V}\otimes\mathcal N_{C/S}
\simeq
\mathcal N\otimes\mathcal N^\vee
\simeq\mathcal O_C. 
\]   It is in fact a $d$-semistable (or ''degenerate'' semistable) projective variety in the sense of Friedman, see \cite[Section 1]{KawNam}.  Projectivity can be checked directly: for $m\gg0$ the ample line bundles
$\mathcal O_V(mC)$ and $\mathcal O_S(1)\otimes\pi^*\mathcal N^{m}$ have
isomorphic restrictions $\mathcal N^{m}$ to $C$, hence glue to a line bundle on
$Y$ which is ample, being ample on each component. 

    We can now equip $Y$ with the standard log structure of a normal crossing variety, so that it is integral (this is automatic), and log smooth over the standard log point $\klog$ (with  chart   étale-locally given by $\N^2\to k[x,y,z]/xy$). We remark that the existence of a global log structure on $Y$ is \cite[Proposition 1.1]{KawNam} in characteristic zero, or \cite[Theorem 11.7]{FKato_Def} in positive characteristic.  
    
    We next show that it admits a projective log smooth lifting over $W(k)$.
        Let $\nu:V\amalg S\to Y$ be the normalization. There is an exact sequence
        \[
        0\longrightarrow\mathcal T_Y^0\longrightarrow
        \nu_*\bigl(\mathcal T_V(-\log C)\oplus\mathcal T_S(-\log C)\bigr)
        \longrightarrow\mathcal T_C\longrightarrow0.
        \]
        We first claim that
        \[
        H^2(V,\mathcal T_V(-\log C))=0.
        \]
        By Serre duality, its dual is
        \[
        H^0\bigl(V,\Omega_V^1(\log C)\otimes\omega_V\bigr).
        \]
        The residue sequence gives
        \[
        0\longrightarrow\Omega_V^1\otimes\omega_V
        \longrightarrow\Omega_V^1(\log C)\otimes\omega_V
        \longrightarrow\omega_V|_C\longrightarrow0.
        \]
        Since $\omega_V^{\otimes2}\simeq\mathcal O_V$, we have
        $\Omega_V^1\otimes\omega_V\simeq\mathcal T_V$, and therefore
        \[
        H^0(V,\Omega_V^1\otimes\omega_V)=0.
        \]
        Also $H^0(C,\omega_V|_C)=H^0(C,\mathcal L)=0$, because $\mathcal L$ is a non-trivial line bundle of degree zero. This proves the claim.
    For the ruled component $S$, the morphism of pairs $(S,C)\to C$ gives an exact sequence
        \[
        0\longrightarrow\mathcal T_{S/C}(-C)
        \longrightarrow\mathcal T_S(-\log C)
        \longrightarrow\pi^*\mathcal T_C\longrightarrow0.
        \]
        On every fibre of $\pi$, the first term restricts to $\mathcal O_{\mathbb P^1}(1)$. Hence
        \[
        R^1\pi_*\mathcal T_{S/C}(-C)=0,
        \qquad
        R^1\pi_*\mathcal O_S=0.
        \]
        It follows that
        \[
        H^2(S,\mathcal T_S(-\log C))=0
        \]
        and that the restriction map
        \[
        H^1(S,\mathcal T_S(-\log C))
        \longrightarrow H^1(C,\mathcal T_C)
        \]
        is surjective. The above normalization sequence therefore gives $   H^2(Y,\mathcal T_Y^0)=0.$ 
Let
\[
\sX=(Y,\mathcal M_Y)\longrightarrow\klog
\]
be the resulting log-smooth morphism. Following
\cite[Definition~5.1]{FKato_Def}, let
\[
\mathcal T^{\log}_{\sX/\klog} = \operatorname{Der}_{\klog}(\sX,\mathcal O_Y)
\]
denote the sheaf of logarithmic derivations of $\sX$ over
$\klog$. By \cite[Proposition~5.2]{FKato_Def}, there is a natural
identification
\[
\mathcal T^{\log}_{\sX/\klog} = \operatorname{Der}_{\klog}(\sX,\mathcal O_Y)
\simeq
\mathcal Hom_{\mathcal O_Y}
\left(
\Omega^1_{\sX/\klog},
\mathcal O_Y
\right).
\]To apply F.~Kato's log deformation theory, it remains to prove
\[
H^2\left(
Y,
\operatorname{Der}_{\klog}(\sX,\mathcal O_Y)
\right)=0.
\]
        The chosen $d$-semistable trivialization also gives an exact sequence  (this can be checked locally using coordinates to describe the sheaf  of log differentials, see \cite{KawNam})
\begin{equation}\label{eq:log-tangent-T0}
0\longrightarrow
\mathcal T^{\log}_{\sX/\klog}
\longrightarrow
\mathcal T_Y^0
\xrightarrow{\ \lambda\ } i_*O_C
\longrightarrow0
\end{equation}
for $i\colon C\to Y$ the inclusion.  Since the obstruction to lifting $\sX$ log smoothly lies in
$H^2(Y,\mathcal T^{\log}_{\sX})$  by \cite[Proposition~8.6]{FKato_Def}, 
it remains to check that
\[
H^1(Y,\mathcal T_Y^0)
\longrightarrow
H^1(C,\mathcal O_C)
\]
is surjective. This can be reduced as before looking at the normalization, and using this time the component $S$ (we leave the details to the reader).  

This gives  a compatible system of log smooth liftings over
$W_n^\times$, and hence a proper formal log smooth lifting
$\widehat{\fX} \to\Spf(W^\times)$ of $\sX$.
    To algebraize it, observe that $ H^2(Y,\mathcal O_Y)=0$. 
        Indeed, the normalization sequence
        \[
        0\longrightarrow\mathcal O_Y\longrightarrow
        \mathcal O_V\oplus\mathcal O_S\longrightarrow\mathcal O_C\longrightarrow0
        \]
        together with $H^1(V,\mathcal O_V)=H^2(V,\mathcal O_V)=0$ and
        $R\pi_*\mathcal O_S\simeq\mathcal O_C$ shows that the map
        $H^1(S,\mathcal O_S)\to H^1(C,\mathcal O_C)$ is an isomorphism and hence that the displayed group vanishes. As a consequence, the ample line bundle constructed above lifts through the formal deformation. By Grothendieck existence theorem, $\widehat{\fX} $ algebraizes to a projective, vertical, log smooth scheme $\fX/W^\times$
        with special fibre $\sX$.

        We now construct the torsion class. The line bundles $\omega_V$ on $V$ and $\pi^*\mathcal L$ on $S$ have canonically isomorphic restrictions to $C$. They therefore glue to a line bundle $\mathcal M$ on $Y$ satisfying
        \[
        \mathcal M|_V\simeq\omega_V,
        \qquad
        \mathcal M|_S\simeq\pi^*\mathcal L.
        \]
        After rescaling one of the chosen trivializations of the squares, they agree on $C$, and hence $ \mathcal M^{\otimes2}\simeq\mathcal O_Y$.
        Let
        \[
        \alpha:=c_1^{\log\text{-}\rm crys}(\mathcal M)
        \in H^2_{\log\text{-}\rm crys}(\sX/W^0).
        \]
        Then $2\alpha=0$.
    It remains to check that $\alpha$ is non-zero. For the strict normal crossing surface $Y=V\cup_C S$, the weight spectral sequence of Nakkajima \cite{nakkajimaTokyo} is
        \[
        E_1^{-k,q+k}
        =
        \bigoplus_{j\geq\max(0,-k)}
        H_{\rm crys}^{q-2j-k}
        \bigl(Y^{(2j+k+1)}/W\bigr)(-j-k)
        \Longrightarrow
        H^q_{ \rm crys}(\sX/W^0).
        \]
        The image of $\alpha$ in
        \[
        E_1^{0,2}=H^2_{\rm crys}(V/W)\oplus H^2_{\rm crys}(S/W)
        \]
        is
        \[
        \alpha_0=(c_1^{\rm crys}(\omega_V),0).
        \]
        Here $c_1^{\rm crys}(\pi^*\mathcal L)=0$, because it is $2$-torsion whereas $H^2_{\rm crys}(S/W)$ is $W$-torsion free by the projective bundle formula.
        By \cite[Theorem~10.1]{nakkajimaTokyo}, the outgoing differential
        \[
        d_1:E_1^{0,2}\longrightarrow E_1^{1,2}=H^2_{\rm crys}(C/W)
        \]
        is the alternating restriction map. It kills $\alpha_0$, since
        $c_1^{\rm crys}(\mathcal L)=0$ in the torsion-free module
        $H^2_{\rm crys}(C/W)$. The incoming differential
        \[
        d_1:E_1^{-1,2}=H^0_{\rm crys}(C/W)(-1)
        \longrightarrow E_1^{0,2}
        \]
        is the alternating Gysin map. The $S$-component of the image of its generator is, up to sign,
        $c_1^{\rm crys}(\mathcal O_S(C))$, which is non-zero because $\mathcal O_S(C)$ restricts to $\mathcal O_{\mathbb P^1}(1)$ on every fibre of $\pi$, and which belongs to the torsion-free module $H^2_{\rm crys}(S/W)$. Hence $\alpha_0$ is not a boundary. Since $Y$ has only two irreducible components, there are no columns with $|k|\geq2$, so no higher differential can enter or leave $E_r^{0,2}$. Thus $\alpha_0$ gives a non-zero class on the $E_\infty$-page. Compatibility of crystalline Chern classes with the weight complex shows that this is the associated graded class of $\alpha$. Hence,
        \[
        0\neq\alpha\in H^2_{\rm crys}(\sX/W^0)[2].
        \]
 By Corollary~\ref{cor.integralHK}(1), under the integral comparison
        \[
        H^2_{\log\text{-}\rm crys}(\sX/W^0)
        \simeq H^2_{\rm dR}(\fX/W^\times)
        \]
        the class $\alpha$ gives therefore a 2-torsion class in de Rham cohomology.
        Finally, $H^2(Y,\mathcal O_Y)=0$ also implies that $\mathcal M$ lifts to a line bundle $\widetilde{\mathcal M}$ on $\fX$. Note that $c_1^{\rm dR}(\widetilde{\mathcal M})$   can be seen  to be a (non-trivial) 2-torsion class by direct inspection, reducing modulo 2 (note that we do not verify
here that the transferred class $\alpha$ agrees with
$c_1^{\rm dR}(\widetilde{\mathcal M})$, although it should be possible after proving a compatibility between the transfer map and $c_1(-)$).
        
        The equality $N(\alpha)=0$ can also be checked directly from Construction~\ref{ConstructionMonodromy}: the crystalline Chern class of $\mathcal M$ over the auxiliary PD base $\sD$ has equal pullbacks along $p_1$ and $p_2$, by base-change functoriality of Chern classes. Hence its expansion with respect to the $p_2$-identification has zero coefficient of $v^{[1]}$, and therefore
        \[
        N(\alpha)=0.
        \]
        We have thus obtained a non-zero integral torsion invariant cycle in
        $H^2_{\rm dR}(\fX/W^\times)$, which disappears after inverting $2$.
        \end{ex}

\begin{ex}[Tate curves]\label{ex:Tate_ramified} In the following example we show that in the ramified setting, the integral Hyodo--Kato isomorphism $\Phi_{\pi}^{\rm int}$  from Corollary \ref{cor.integralHK} identifies a lattice inside the de Rham cohomology of the generic fiber of a formal semistable family which is genuinely different from the standard lattice coming from the log de Rham cohomology of the family. All the computations are standard.

To see this, let us consider the field $K = \Q_p(\pi)$ where $\pi^p=p$. We have $\OK = \Z_p[\pi]$ with residue field $\F_p$. Let $E = E_p$ be the Tate curve of parameter $q=p$ defined over $K$, that is the rigid analytic torus $\G_{m,K}/q^\Z \to \mathrm{Spa}(K)$. It admits a standard semistable formal model over $\OK$ given by the proper formal scheme $\mathfrak E$, obtained by gluing $p$-charts with local equations $\Spf \OK\langle    v_i,w_i\rangle /(v_iw_i-\pi)$ ($0\leq i\leq p-1$), see e.g., \cite[Section 5]{ertl-yamada}. By \cite[Section 5]{ertl-yamada}, the gluing gives a proper log smooth log scheme of Cartier type $\mathfrak E \to \Spec(\OK^\pi)$, with special fiber 
\[ E_0 = C_0\cup \ldots C_{p-1}, \quad C_i \simeq \P^1_{\F_p}, \quad C_i\cap C_j = {x_{ij}}\]
  Each $C_i$ meets $C_j$ exactly in the point $x_{ij}$ transversally.  The log structure on $\mathfrak E$ is locally given by $\N^2\to \OK\langle    v_i,w_i\rangle /(v_iw_i-\pi)$, $(1,0)\mapsto v_i$ and $(0,1)\mapsto w_i$.

  To compute the Hyodo--Kato cohomology of $\mathcal{E} = (E_0, \mathcal{M}_{E}) \to \Spec(\F_p^0)$ over $\Z_p^0$ we use again the spectral sequence of Mokrane--Nakkajima, with the identifications for the cohomology of curves provided by Coleman--Iovita in \cite{coleman_iovita}. 

More precisely, let $\Gamma$ be the dual graph of $E_0$. By  \cite[Chapter~I, \S1, pp.~4--5]{coleman_iovita}, its vertices are the
irreducible components of $E_0$, and its oriented edges are the two
ordered branches over every node (so, they correspond to intersection points). Choosing one orientation of each edge, write
\[
v_i=[C_i],\qquad
e_i\colon v_i\longrightarrow v_{i+1}.
\]
So we have  the chain complex 
\[
C_1(\Gamma,\mathbb Z_p)
 =\bigoplus_{i\in\mathbb Z/p\mathbb Z}\mathbb Z_p e_i\xrightarrow{\partial} \bigoplus_{j\in \Z/p\Z} \Z_p v_j,
\qquad
\partial(e_i)=v_{i+1}-v_i.
\]
A generator for $H_1$ and the corrisponding class in $H^1$ class are then given by
\[
\gamma=e_0+\cdots+e_{p-1}
 \in H_1(\Gamma,\mathbb Z_p),
\qquad
a_\Gamma\in H^1(\Gamma,\mathbb Z_p),
\qquad
\langle a_\Gamma,\gamma\rangle=1.
\]
Let $M_{HK}=H^1_{\rm crys}(\mathcal{ E} /\Z_p^0)$. Using the weight spectral sequence as in Example \ref{ex:enriques} we obtain a short exact sequence of $\varphi$-modules over $\Z_p$
\[ 0\to H^1(\Gamma,\Z_p) \to M_{HK} \to H_1(\Gamma,\Z_p)(-1)\to 0.\]
This follows from the explicit description in \cite[Theorem 10.1]{nakkajima} as
\[E_1^{0,0}=H^0_{\rm crys}(Y^{(1)}/\Z_p) = \bigoplus v_j \Z_p, \quad E_1^{1,0} = H^0_{\rm crys}(Y^{(2)}/\Z_p) = \bigoplus e_i \Z_p,\]
and $E_1^{1,0}=\bigoplus H^1_{\rm crys}(\P^1/\Z_p)=0$. The differential $E_1^{0,0}\to E_1^{1,0}$ is identified with the boundary map in the cohomology of $\Gamma$, giving $E_2^{1,0}=H^1(\Gamma,\Z_p)$. Similarly, we have 
\[E_{1}^{-1,2} = H^0_{\rm crys}(Y^{(2)}/\Z_p)(-1) = \bigoplus e_i\Z_p(-1),\] 
\[E_1^{0,2}=H^2_{\rm crys}(Y^{(1)}/\Z_p) = \bigoplus H_{\rm crys}^2(C_i/\Z_p) \simeq \bigoplus \Z_p(-1)\]
and the differential is identified with $\partial$, giving $E_2^{-1,2}=H_1(\Gamma,\Z_p)(-1)$.  We let $a_{HK}$ be the image of $a_\Gamma$ in $M_{HK}$, generating a rank 1 submodule of $M_{HK}$. If we write $\varphi_M$ for the Frobenius on $M_{HK}$, note that $\varphi_M(a_{HK}) = a_{HK}$ (this follows again from the above computation, noting that Frobenius is the identity on $H^0_{\rm crys}(-/\Z_p)$ on $\P^1$ and on points). Similarly, note that on the quotient $H_1(\Gamma,\Z_p)(-1)$, the Frobenius acts as multiplication by $p$. 

Let now $b_0$ be any element in $M_{HK}$ lifting $\gamma \in H_1(\Gamma,\Z_p)$. By the previous discussion, we get $\varphi_M(b_0)-p b_0 \in H^1(\Gamma,\Z_p) = a_{HK}\Z_p$, so $\varphi_M(b_0)-p b_0  = c a_{HK}$ for some $c\in \Z_p$. Setting $b_{HK}=b_0+\frac{c}{p-1}a_{HK}$, we obtain another generator satisfying $\varphi_M(b_{HK})=p b_{HK}$. In conclusion, $M_{HK}$ is free of rank 2 as $\Z_p$-module on generators $(a_{HK},b_{HK})$ and with Frobenius matrix given by the identity on $a_{HK}$ and the multiplication by $p$ on $b_{HK}$ (note that this agrees with the computation of \cite[Proposition 5.1]{ertl-yamada} using the overconvergent Hyodo--Kato complex). 

We can now follow \cite{coleman_iovita} and \cite{nakkajima} to describe the monodromy operator. More precisely, by \cite[Chapter I, §3, pp. 17–18]{coleman_iovita}, there is a pairing $\langle-,-\rangle C_1(\Gamma)\times C_1(\Gamma)\to \Z$ (edge pairing) satisfying $\langle \gamma, \gamma\rangle =p$ and inducing a morphism 
\[\mu_\Gamma\colon H_1(\Gamma,\Z_p)\to H^1(\Gamma,\Z_p)\]
By \cite[Corollary 11.5 and Proposition 11.10]{nakkajima}, the map $\mu_\Gamma$ is identified with the crystalline monodromy operator. Using it, we obtain
\[ N(a_{HK})=0, \quad N(b_{HK})=pa_{HK},\]
in agreement with \cite[Proposition 5.1(1)]{ertl-yamada} for $r=p$. 

We can now apply the integral transport theorem. Since $e=p$, we can apply  Theorem \ref{thm.integralHK} for the chosen uniformizer $\pi$ with $m=1$. Since the Frobenius $\phi$ of $\Z_p$ is the identity, we obtain an isomorphism of $\OK$-modules:
\begin{equation}\label{eq_integral_tate}
    \Lambda_{HK} =M_{HK}\otimes_{\Z_p} \OK=H^1_{\rm crys}(\mathcal{E}/\Z_p^0) \otimes_{\Z_p} \OK \xrightarrow{\Phi_{\pi, \mathfrak E}^{\rm int}} H^1(R\Gamma(\mathfrak E  , L\eta_p \widehat{\omega}^*_{\mathfrak{E} / \OK^\pi}))=:\Lambda_\eta\end{equation}
Writing $a_{dR}$ and $b_{dR}$ for the images of $a_{HK}$ and $b_{HK}$ respecively, they satisfy $N(a_{dR})=0$, $N(b_{dr})=pa_{dR}$. In particular $N\neq0$,  $N=0\pmod p $ and $\mathrm{coker}(N) = \OK\oplus \OK/p$. So, $H^1$ is torsion-free, but  the cokernel of the integral monodromy has nonzero $p$-torsion.

For comparison, consider
\[
\Lambda_{\rm st}
:=
H^1\!\left(
R\Gamma\!\left(
\mathfrak{E},
\widehat\omega^*_{\mathfrak E/\OKpi}
\right)
\right)
\]
i.e., the standard  $\OK$-lattice given by the de Rham cohomology of the semistable model. Let
$a_{\rm dR},\omega_{\rm dR}\in H^1_{\rm dR}(E/K)$ be the classes
represented in the ordered \v{C}ech complex of
\cite[\S5]{ertl-yamada} by
\[
(0,\ldots,0,1)
\qquad\text{and}\qquad
(d\log w_0,\ldots,d\log w_{p-1}),
\]
respectively. The direct computation following \cite{ertl-yamada} gives
\begin{equation}\label{eq:tate-three-lattices}
\Lambda_{\rm st}
=
\OK a_{\rm dR}\oplus\OK\omega_{\rm dR},
\qquad
\Lambda_\eta
=
\OK a_{\rm dR}\oplus p\OK\omega_{\rm dR}.
\end{equation}

We can also describe the lattice expected from the classical rational
Hyodo--Kato map. 
Let
\[
\phi_{\rm lin}\colon\phi^*M_{\rm HK}[1/p]\longrightarrow M_{\rm HK}[1/p]
\]
be the linearization of Frobenius $\varphi_M$ on $M_{\rm HK}$.
 Thus
\[
\phi_{\rm lin}(a_{\rm HK} )=a_{\rm HK},
\qquad
\phi_{\rm lin}(b_{\rm HK} )=p\,b_{\rm HK}.
\]
Choose the branch $\log_\pi$ of the $p$-adic logarithm for which
$\log_\pi(\pi)=0$. By
\cite[Proposition~5.1 and Proposition~6.13]{ertl-yamada}, the classical rational
Hyodo--Kato map 
\[\Psi_\pi^{\rm crys}: \Lambda_{\rm HK}[1/p] \to \Lambda_{\rm st}[1/p] \] 
satisfies
\[
\Psi_\pi^{\rm crys}(a_{\rm HK})=a_{\rm dR},
\qquad
\Psi_\pi^{\rm crys}(b_{\rm HK})=\omega_{\rm dR}.
\]
 Hence, after inverting $p$, our integral comparison map $\Phi_{\pi, \mathfrak E}^{\rm int}$ from \eqref{eq_integral_tate} is identified with $ \Psi_\pi^{\rm crys}\circ (\phi_{\rm lin}\otimes_{\Z_p} \OK)$ and 
\[\Lambda_\eta=\OK a_{\rm dR}\oplus p\OK\omega_{\rm dR}
=
\bigl(\Psi_\pi^{\rm crys}\circ (\phi_{\rm lin}\otimes_{\Z_p} \OK) \bigr)
 \bigl(\Lambda_{\rm HK}\bigr)\]
 and its $W$-submodule $W a_{\rm dR}\oplus pW\omega_{\rm dR}$ is equipped with a natural action of Frobenius and monodromy operator.
 This explains why  $L\eta_p$ is essential in this example. Indeed,
\begin{equation}\label{eq:tate-lattice-index}
\Lambda_{\rm st}/\Lambda_\eta
\simeq
\OK/p
=
\OK/\pi^p,
\qquad
\operatorname{length}_{\OK}(\Lambda_{\rm st}/\Lambda_\eta)=p, 
\end{equation}
so if we replace the right-hand side of \eqref{eq_integral_tate} with the ordinary (untwisted) de Rham lattice $\Lambda_{\rm st}$,  
$ \Psi_\pi^{\rm crys}\circ (\phi_{\rm lin}\otimes_{\Z_p} \OK)$ has its image only
$\Lambda_\eta\subsetneq\Lambda_{\rm st}$   
with cokernel $\OK/p$, preventing the map from being an isomorphism. 
After inverting $p$, this distinction disappears, as all terms coincide with the de Rham cohomology of $E$ over $K$:
\[
\Lambda_{\rm st}[1/p]
=
\Lambda_\eta[1/p]
=
H^1_{\rm dR}(E/K).
\]
If one sets $c_{\rm HK}:=\frac{1}{p}b_{\rm HK},$ 
then rationally
\[
\phi(a_{\rm HK})=a_{\rm HK},
\qquad
\phi(c_{\rm HK})=p\,c_{\rm HK},
\qquad
N(a_{\rm HK})=0,
\qquad
N(c_{\rm HK})=a_{\rm HK}.
\]
In particular, the $p$-divisibility of  the integral monodromy matrix (and thus the length in
\eqref{eq:tate-lattice-index}) are all invisible in the rational
$(\phi,N)$-module.
\end{ex}

\bibliographystyle{alpha}
\bibliography{bib}

@article{AbhinandanYoucis,
    author = {Abhinandan and Youcis, Alex}, 
    title = {An integral comparison of crystalline and de {R}ham cohomology},
    note = {ArXiv preprint: 2507.17631 \url{https://arxiv.org/abs/2507.17631}},
    year = {2025}
}

@article {AGV,
    AUTHOR = {Ayoub, Joseph and Gallauer, Martin and Vezzani, Alberto},
     TITLE = {The six-functor formalism for rigid analytic motives},
   JOURNAL = {Forum Math. Sigma},
  FJOURNAL = {Forum of Mathematics. Sigma},
    VOLUME = {10},
      YEAR = {2022},
     PAGES = {Paper No. e61, 182},
      ISSN = {2050-5094},
   MRCLASS = {14F42 (14C15 14G22 18N60)},
  MRNUMBER = {4466640},
MRREVIEWER = {Matthias\ Wendt},
       DOI = {10.1017/fms.2022.55},
       URL = {https://doi-org.pros2.lib.unimi.it/10.1017/fms.2022.55},
}

@article {coleman_iovita,
    AUTHOR = {Coleman, Robert and Iovita, Adrian},
     TITLE = {The {F}robenius and monodromy operators for curves and abelian
              varieties},
   JOURNAL = {Duke Math. J.},
  FJOURNAL = {Duke Mathematical Journal},
    VOLUME = {97},
      YEAR = {1999},
    NUMBER = {1},
     PAGES = {171--215},
      ISSN = {0012-7094,1547-7398},
   MRCLASS = {14F30 (14F40 14K15)},
  MRNUMBER = {1682268},
MRREVIEWER = {Bruno\ Chiarellotto},
       DOI = {10.1215/S0012-7094-99-09708-9},
       URL = {https://doi-org.pros2.lib.unimi.it/10.1215/S0012-7094-99-09708-9},
}

@article{Revisiting_HK_part2,
    author = {Binda, Federico and Saito, Shuji},
    title = {Hyodo--{K}ato Transfer through Twisted Prismatization },
    year = {2026},
    note = {In preparation}
}

@article{BMS2,
author= {Bhatt, Bhargav and Morrow, Matthew and Scholze, Peter},
title = {Topological Hochschild Homology and Integral p-adic Hodge theory},
fjournal = {Institut des Hautes \'{E}tudes Scientifiques. Publications Math\'{e}matiques},
journal = {Inst. Hautes \'{E}tudes Sci. Publ. Math.},
volume = {129},
year = {2019},
pages = {199--310}
}

@book{BLM,
    AUTHOR = {Bhatt, Bhargav and Lurie, Jacob and Mathew, Akhil},
    TITLE = {Revisiting the de {R}ham--{W}itt complex},
    Series = {Ast\'{e}risque},
    FSeries = {Ast\'{e}risque},
    Volume = {424},
    Year = {2021},
    Pages = {viii+165},
    Publisher = {Paris: Soci{\'e}t{\'e} Math{\'e}matique de France (SMF)}}

@misc{BLMP,
    author = {Binda, Federico and Lundemo, Tommy and Merici, Alberto and Park, Doosung},
    title = {Logarithmic prismatic cohomology, motivic sheaves, and comparison theorems},
    note = {To appear in Crelle. ArXiv preprint \url{https://arxiv.org/abs/2312.13129}},
    year = {2026}
}

@article{BLPO-HKR,
title = {A {H}ochschild-{K}ostant-{R}osenberg theorem and residue sequences for logarithmic {H}ochschild homology},
journal = {Advances in Math.},
volume = {435},
pages = {109354},
year = {2023},
issn = {0001-8708},
doi = {https://doi.org/10.1016/j.aim.2023.109354},
url = {https://www.sciencedirect.com/science/article/pii/S0001870823004978},
author={Binda, Federico and Lundemo, Tommy and Park, Doosung and {\O}stv{\ae}r, Paul Arne},
}

@article{Cesnavicius-Koshikawa,
    author = {{\v C}esunavi{\v c}ius, K{\c e}stutis and Koshikawa, Teruhisa},
    title = {The $\mathrm{A}_{\inf}$-cohomology in the semistable case},
    journal = {Compos.Math.},
    volume = {155},
    year = {2019},
    pages = {2039-2128}
}

@article{IllusiedRW,
	author = {Illusie, Luc},
	title = {Complexe de {D}e {R}ham--{W}itt et cohomologie cristalline},
	fjournal = {Annales Scientifiques de l’{\'E}cole Normale Sup{\'e}rieure.
Quatri{\`e}me S{\'e}rie.},
	journal = {Ann. scient. Ec. Norm. Sup. (4)},
	volume = {12},
	year = {1979},
	pages = {501--661}
}

@article {BMS1,
    AUTHOR = {Bhatt, Bhargav and Morrow, Matthew and Scholze, Peter},
     TITLE = {Integral {$p$}-adic {H}odge theory},
   JOURNAL = {Publ. Math. Inst. Hautes \'{E}tudes Sci.},
  FJOURNAL = {Publications Math\'{e}matiques. Institut de Hautes \'{E}tudes
              Scientifiques},
    VOLUME = {128},
      YEAR = {2018},
     PAGES = {219--397},
      ISSN = {0073-8301,1618-1913},
}

@article {BeilinsonPeriod,
    AUTHOR = {Beilinson, A.},
     TITLE = {On the crystalline period map},
   JOURNAL = {Camb. J. Math.},
  FJOURNAL = {Cambridge Journal of Mathematics},
    VOLUME = {1},
      YEAR = {2013},
    NUMBER = {1},
     PAGES = {1--51},
      ISSN = {2168-0930,2168-0949},
   MRCLASS = {14F30 (14F20 14F40)},
  MRNUMBER = {3272051},
MRREVIEWER = {Nobuo\ Tsuzuki},
       DOI = {10.4310/CJM.2013.v1.n1.a1},
       URL = {https://doi.org/10.4310/CJM.2013.v1.n1.a1},
}

@article {nakkajimaTokyo,
    AUTHOR = {Nakkajima, Yukiyoshi},
     TITLE = {{$p$}-adic weight spectral sequences of log varieties},
   JOURNAL = {J. Math. Sci. Univ. Tokyo},
  FJOURNAL = {The University of Tokyo. Journal of Mathematical Sciences},
    VOLUME = {12},
      YEAR = {2005},
    NUMBER = {4},
     PAGES = {513--661},
      ISSN = {1340-5705},
   MRCLASS = {14F30 (14G22)},
  MRNUMBER = {2206357},
MRREVIEWER = {Martin\ C.\ Olsson},
}

@article{HK,
	title = {Semi-stable reduction and cystalline cohomology with logarithmic poles},
	author = {Hyodo, Osamu and Kato, Kazuya},
	journal = {Ast\'erisque}, 
	volume = {223},
	year = {1994}
}

@article{Ya,
	title={LOGARITHMIC DE {R}HAM–{W}ITT COMPLEXES VIA THE D{\'E}CALAGE OPERATOR},
	author={Zijian Yao},
	journal={Journal of the Institute of Mathematics of Jussieu},
	year={2021}
}

@unpublished{achingeretal,
    author = {Achinger, Piotr and H\"ubner, Katharina and Lara, Marcin and Stix, Jakob},
    title = {Logarithmic geometry beyond fs},
    note = {ArXiv preprint: \url{https://arxiv.org/pdf/2411.13662}},
}

@article {CN,
    AUTHOR = {Colmez, Pierre and Nizio\l, Wies\l awa},
     TITLE = {On {$p$}-adic comparison theorems for rigid analytic
              varieties, {I}},
   JOURNAL = {M\"unster J. Math.},
  FJOURNAL = {M\"unster Journal of Mathematics},
    VOLUME = {13},
      YEAR = {2020},
    NUMBER = {2},
     PAGES = {445--507},
      ISSN = {1867-5778,1867-5786},
   MRCLASS = {14G22 (11F77 14F30)},
  MRNUMBER = {4130689},
MRREVIEWER = {Oliver\ Gregory},
}

@incollection {ogus_compositio,
    AUTHOR = {Ogus, Arthur},
     TITLE = {{$F$}-crystals on schemes with constant log structure},
      NOTE = {Special issue in honour of Frans Oort},
   JOURNAL = {Compositio Math.},
  FJOURNAL = {Compositio Mathematica},
    VOLUME = {97},
      YEAR = {1995},
    NUMBER = {1-2},
     PAGES = {187--225},
      ISSN = {0010-437X,1570-5846},
   MRCLASS = {14F30 (14F20 14F40 14G20)},
  MRNUMBER = {1355125},
MRREVIEWER = {Adolfo\ Quir\'os},
       URL = {http://www.numdam.org.pros2.lib.unimi.it/item?id=CM_1995__97_1-2_187_0},
}

@book {bo,
    AUTHOR = {Berthelot, Pierre and Ogus, Arthur},
     TITLE = {Notes on crystalline cohomology},
 PUBLISHER = {Princeton University Press, Princeton, NJ; University of Tokyo
              Press, Tokyo},
      YEAR = {1978},
     PAGES = {vi+243},
      ISBN = {0-691-08218-9},
   MRCLASS = {14F30},
  MRNUMBER = {491705},
MRREVIEWER = {G.\ Horrocks},
}

@book {EnriquesBook,
    AUTHOR = {Cossec, Francois and Dolgachev, Igor and Liedtke,
              Christian},
     TITLE = {Enriques surfaces. {I}},
   EDITION = {Second},
      NOTE = {With an appendix by S. Kondo},
 PUBLISHER = {Springer, Singapore},
      YEAR = {2025},
     PAGES = {xxi+681},
      ISBN = {978-981-96-1213-0; 978-981-96-1214-7},
   MRCLASS = {14J28},
  MRNUMBER = {4911529},
       DOI = {10.1007/978-981-96-1214-7},
       URL = {https://doi-org.pros2.lib.unimi.it/10.1007/978-981-96-1214-7},
}

@article {FKato_Def,
    AUTHOR = {Kato, Fumiharu},
     TITLE = {Log smooth deformation theory},
   JOURNAL = {Tohoku Math. J. (2)},
  FJOURNAL = {The Tohoku Mathematical Journal. Second Series},
    VOLUME = {48},
      YEAR = {1996},
    NUMBER = {3},
     PAGES = {317--354},
      ISSN = {0040-8735,2186-585X},
   MRCLASS = {14D15 (14M25)},
  MRNUMBER = {1404507},
       DOI = {10.2748/tmj/1178225336},
       URL = {https://doi-org.pros2.lib.unimi.it/10.2748/tmj/1178225336},
}

@article {KawNam,
    AUTHOR = {Kawamata, Yujiro and Namikawa, Yoshinori},
     TITLE = {Logarithmic deformations of normal crossing varieties and
              smoothing of degenerate {C}alabi-{Y}au varieties},
   JOURNAL = {Invent. Math.},
  FJOURNAL = {Inventiones Mathematicae},
    VOLUME = {118},
      YEAR = {1994},
    NUMBER = {3},
     PAGES = {395--409},
      ISSN = {0020-9910,1432-1297},
   MRCLASS = {32G05 (14D15 14J32 14J40 32S30)},
  MRNUMBER = {1296351},
MRREVIEWER = {Claire\ Voisin},
       DOI = {10.1007/BF01231538},
       URL = {https://doi-org.pros2.lib.unimi.it/10.1007/BF01231538},
}

@article {BindaKrishna,
    AUTHOR = {Binda, Federico and Krishna, Amalendu},
     TITLE = {Zero cycles with modulus and zero cycles on singular
              varieties},
   JOURNAL = {Compos. Math.},
  FJOURNAL = {Compositio Mathematica},
    VOLUME = {154},
      YEAR = {2018},
    NUMBER = {1},
     PAGES = {120--187},
      ISSN = {0010-437X,1570-5846},
   MRCLASS = {14C25 (14F30 19E15)},
  MRNUMBER = {3719246},
MRREVIEWER = {Fumio\ Hazama},
       DOI = {10.1112/S0010437X17007503},
       URL = {https://doi-org.pros2.lib.unimi.it/10.1112/S0010437X17007503},
}

@unpublished{BGV,
    author = {Binda, Federico and Gallauer, Martin and Vezzani, Alberto},
    title = {Motivic monodromy and $p$-adic cohomology theories},
    year = {2023},
 url = {https://arxiv.org/abs/2306.05099v1},
    note = {To appear in JEMS. ArXiv preprint: \url{https://arxiv.org/abs/2306.05099v1}}
}

@unpublished{ertl-yamada,
   title={Rigid analytic reconstruction of {H}yodo--{K}ato theory}, 
      author={Veronika Ertl and Kazuki Yamada},
      year={2025},
    note = {ArXiv preprint: \url{https://arxiv.org/abs/1907.10964v6}}
}

@book{ogu,
	author = {Ogus, Arthur},
	doi = {10.1017/9781316941614},
	fseries = {Cambridge Studies in Advanced Mathematics},
	isbn = {978-1-107-18773-3},
	mrclass = {14D06 (14A20 14M25)},
	mrreviewer = {Howard M. Thompson},
	pages = {xviii+539},
	publisher = {Cambridge Univ. Press},
	series = {Cambridge Stud. Adv. Math.},
	title = {Lectures on logarithmic algebraic geometry},
	url = {https://doi.org/10.1017/9781316941614},
	volume = {178},
	year = {2018}}

@incollection{katolog,
    author = {Kato, Kazuya},
    title = {Logarithmic Structures of Fontaine--Illusie},
    booktitle = {Algebraic analysis, geometry, and number
theory (Baltimore, MD, 1988)},
    publisher = {Johns Hopkins Univ. Press, Baltimore, MD},
    year = {1988},
    pages =  {191--224}
}

@article{tsuji,
    author = {Tsuji, Takeshi},
    title = {Saturated morphisms of logarithmic schemes},
    fjournal = {Tunisian Journal of Mathematics},
    journal = {Tunis. J. Math.},
    volume = {1},
    number = {2},
    year = {2019},
    pages = {185--220}
}

@article {tsuji_Faltings,
    AUTHOR = {Tsuji, Takeshi},
     TITLE = {{$p$}-adic \'etale cohomology and crystalline cohomology in
              the semi-stable reduction case},
   JOURNAL = {Invent. Math.},
  FJOURNAL = {Inventiones Mathematicae},
    VOLUME = {137},
      YEAR = {1999},
    NUMBER = {2},
     PAGES = {233--411},
      ISSN = {0020-9910,1432-1297},
   MRCLASS = {14F30 (14F20)},
  MRNUMBER = {1705837},
MRREVIEWER = {Abdellah\ Mokrane},
       DOI = {10.1007/s002220050330},
       URL = {https://doi-org.pros2.lib.unimi.it/10.1007/s002220050330},
}

@article {deg_niziol,
    AUTHOR = {D\'eglise, Fr\'ed\'eric and Nizio\l, Wies\l awa},
     TITLE = {On {$p$}-adic absolute {H}odge cohomology and syntomic
              coefficients. {I}},
   JOURNAL = {Comment. Math. Helv.},
  FJOURNAL = {Commentarii Mathematici Helvetici. A Journal of the Swiss
              Mathematical Society},
    VOLUME = {93},
      YEAR = {2018},
    NUMBER = {1},
     PAGES = {71--131},
      ISSN = {0010-2571,1420-8946},
   MRCLASS = {14F30 (11G25 14C30 14F42 14G20)},
  MRNUMBER = {3777126},
MRREVIEWER = {Adolfo\ Quir\'os},
       DOI = {10.4171/CMH/430},
       URL = {https://doi.org/10.4171/CMH/430},
}

@book{SAG,
	author = {Lurie, Jacob},
	publisher = {https://www.math.ias.edu/~lurie/papers/SAG-rootfile.pdf},
	title = {Spectral Algebraic Geometry},
	year = 2018}

@book{nakkajima,
    title = {Weight Filtration and Slope Filtration on the Rigid Cohomology of a Variety in Characteristic P>0},
    author = {Yukiyoshi Nakkajima},
    fseries = {M\'emoire de la Soci\'et\'e math\'ematique de France},
	publisher = {Soci{\'e}t{\'e} math{\'e}matique de France},
	series = {M{\'e}m. Soc. Math. Fr.},
	volume = {130-131},
	year = {2012}
}

@article{Du-Moon-Shimizu,
    title = {A log Prismatic-crystalline comparison theorem},
    author = {Du, Heng and Moon, Yong Su and Shimizu, Koji},
    journal = { },
    volume = {},
    issue = {},
    year = {},
   }

\end{document}